\newtheorem{assumption}[theorem]{Assumption}
\newtheorem{remark}[theorem]{Remark}
\newenvironment{customoracle}[1]
  {\innercustomoracle}
  {\endinnercustomoracle} 
\newcommand{\SSSPQ}{\texttt{SS-SQP}}
\newcommand{\ASSPQ}{\texttt{AS-SQP}}
\newcommand{\change}[1]{{\color{black}#1}}
\newcommand{\dTrue}{d} 
\newcommand{\yTrue}{y} 
\newcommand\norm[1]{\lVert#1\rVert}
\newcommand{\taubartrial}{\bar\tau^{\rm trial}}
\newcommand{\tauTrue}{\tau}
\newcommand{\tautrialTrue}{\tau^{\rm trial}}
\newcommand{\Tau}{\mathcal{T}}
\newcommand\abs[1]{\left\lvert{#1}\right\rvert}
\def\sF{\mathcal{F}}
\title {A Sequential Quadratic Programming Method with High Probability Complexity Bounds for Nonlinear Equality Constrained Stochastic Optimization\footnotemark[1]}
\author{Albert~S.~Berahas\footnotemark[2]
   \and Miaolan~Xie\footnotemark[3]
   \and Baoyu~Zhou\footnotemark[4]}
\date  {\today}
\begin{document}

\maketitle

\renewcommand{\thefootnote}{\fnsymbol{footnote}}
\footnotetext[1]{This material is based upon work supported by the Office of Naval Research
under award number N00014-21-1-2532.}
\footnotetext[2]{Department of Industrial and Operations Engineering, University of Michigan, Ann Arbor, MI, USA; E-mails: \email{albertberahas@gmail.com}}
\footnotetext[3]{School of Industrial Engineering, Purdue University, 
West Lafayette, IN, USA; E-mail: \email{xie537@purdue.edu}}
\footnotetext[4]{School of Computing and Augmented Intelligence, Arizona State University, Tempe, AZ, USA; E-mail: \email{baoyu.zhou@asu.edu}}
\renewcommand{\thefootnote}{\arabic{footnote}}

\begin{abstract}
  A step-search sequential quadratic programming method is proposed for solving nonlinear equality constrained stochastic optimization problems. It is assumed that constraint function values and derivatives are available, but only stochastic approximations of the objective function and its associated derivatives can be computed via inexact probabilistic zeroth- and first-order oracles. Under reasonable assumptions, a high probability bound on the number of iterations that the algorithm requires to reach a first-order $\varepsilon$-stationary iterate is derived, where $\varepsilon$ is lower bounded by a positive quantity dictated by the noise level of the inexact probabilistic zeroth- and first-order oracles. Numerical results on standard nonlinear optimization test problems illustrate the advantages and limitations of our proposed method.
\end{abstract}

\begin{keywords}
  nonlinear optimization, constrained  stochastic optimization, sequential quadratic optimization, step search, probabilistic oracles
\end{keywords}

\begin{AMS}
  49M05, 49M10, 49M37, 65K05, 65K10, 90C15, 90C30, 90C55
\end{AMS}

\numberwithin{equation}{section}
\numberwithin{theorem}{section}

\section{Introduction}
We propose a step-search\footnote{We use the term \emph{step-search} methods, coined in~\cite{jin2022high} to differentiate with line-search methods. Step-search methods are similar to line-search methods, but the search (step) direction can change during the back-tracking procedure.} sequential quadratic programming (SQP) algorithm for solving nonlinear equality constrained stochastic optimization problems of the form
\begin{equation}\label{eq.prob}
    \min_{x\in\R{n}}\  f(x) \quad \text{s.t.}\ c(x) = 0,
\end{equation}
where $f:\R{n}\to\R{}$ and $c:\R{n}\to\R{m}$ are both continuously differentiable. We consider the setting in which exact function and derivative information of the objective function is unavailable, instead, only $\bar{f}(x;\Xi^0(x))$ and $\bar{g}(x;\Xi^1(x))$, the random estimates of the objective function $f(x)$ and its first-order derivative $\nabla f(x)$, 
are available via inexact probabilistic oracles, where $\Xi^0(x)$ and $\Xi^1(x)$ (with probability space $(\Omega,\mathcal{F}_{\Omega},P)$) denote the underlying randomness in the objective function and gradient estimates, respectively. 
On the other hand, the constraint function value $c(x)$ and its Jacobian $\nabla c(x)^T$  are assumed to be available. 
Such deterministically constrained stochastic optimization problems arise in multiple science and engineering applications, including but not limited to computer vision \cite{RoyMhamHara18}, multi-stage optimization \cite{ShapDentRusz21}, natural language processing \cite{NandPathSing19}, network optimization \cite{Bert98}, and PDE-constrained optimization~\cite{ReesDollWath10}. 

The majority of the methods proposed for solving deterministically equality constrained 
stochastic optimization problems follow either projection or penalty approaches. The former type of methods (e.g., stochastic projection methods \cite{HazaLuo16,KushClar12,Lan20,LuFreu21}) require that the feasible region satisfies strict conditions, to ensure well-definedness, that are not satisfied by general nonlinear functions and thus are not readily applicable. In contrast, stochastic penalty methods \cite{ChenTungVeduMori18,RaviDinhLokhSing19}, do not impose such conditions 
on the feasible region. These methods transform constrained problems into unconstrained problems via constraint penalization in the objective function, and apply stochastic algorithms to solve the transformed unconstrained problems. Stochastic penalty methods are easy to implement and well-studied, however, the empirical performance of such methods is sensitive to parameter choices and ill-conditioning, and is usually inferior to paradigms that treat constraints as constraints.

Recently, a class of stochastic SQP methods has been developed for solving \eqref{eq.prob}. These methods outperform stochastic penalty methods empirically and have convergence guarantees in expectation~\cite{BeraCurtRobiZhou21,NaAnitKola22}. In~\cite{BeraCurtRobiZhou21}, the authors propose an objective-function-free stochastic SQP method with adaptive step sizes for the fully stochastic regime. In contrast, in~\cite{NaAnitKola22}, the authors propose a stochastic step-search (referred to as line-search in the paper~\cite{NaAnitKola22}) SQP method for the setting in which the errors in the function and derivative approximations can be diminished. We note that several algorithmic choices in the two papers~\cite{BeraCurtRobiZhou21,NaAnitKola22}, e.g., merit functions and merit parameters, are different. 
Several extensions have been proposed \cite{BeraCurtONeiRobi21,CurtRobiZhou21,BeraBollZhou22,BeraShiYiZhou22,na2021inequality,OztoByrdNoce21}, however, very few of these works (or others in the literature) derive worst-case iteration complexity (or sample complexity) because of the difficulties that arise in the constrained setting and due to the stochasticity. Notable exceptions are, \cite{CurtONeiRobi21} where the authors  provide convergence rates (and complexity guarantees) for the algorithm proposed in \cite{BeraCurtRobiZhou21}, and \cite{BeraBollZhou22,NaMaho22} that provide complexity bounds for variants of the stochastic SQP methods under additional assumptions and in the setting in which the errors can be diminished. We note that, with the exception of~\cite{OztoByrdNoce21}, all methods mentioned above assume access to {\sl unbiased} estimates of the gradients (and function values where necessary).

For all aforementioned methods, the most vital ingredient is the quality and reliability of the random estimates of the objective function and its derivatives. In our setting, neither the objective function nor its derivatives are assumed to be directly accessible, only stochastic approximations of them are accessible to the algorithm in the form of inexact probabilistic zeroth-order and first-order oracles (precise definitions will be introduced in Section~\ref{sec.oracle}). Such oracles have been proposed and utilized in several works; e.g., \cite{jin2022high,cao2022first,GratRoyeViceZhan15,BandScheVice14, scheinberg2022stochastic, menickelly2023stochastic}. Moreover, these probabilistic oracles and their variants  have been proposed for direct-search methods \cite{GratRoyeViceZhan15,RobeRoye22}, trust-region methods \cite{BandScheVice14,BlanCartMeniSche19,ChenMeniSche18,GratRoyeViceZhan18}, and step-search methods \cite{CartSche18,PaquSche20,NaAnitKola22,bellavia2022linesearch}. We note that only \cite{NaAnitKola22} considers the setting with (equality) constraints, but iteration complexity (or sample complexity) results are not provided.

\subsection{Contributions}
In this paper, we design, analyze, and implement a step-search SQP (\SSSPQ) method for solving nonlinear equality constrained stochastic optimization problems where exact constraint function values and derivatives are available, but only stochastic approximations of the objective function and its associated derivatives can be computed. These stochastic approximations are computed via inexact probabilistic zeroth- and first-order oracles, which are 
similar to those in~\cite{jin2022high}, with parameters controlling the accuracy and reliability of the approximations, and allowing for biased approximations. 
Our proposed algorithm is inspired by state-of-the-art line-search SQP methods~\cite{byrd2008inexact} in conjunction with the recent stochastic adaptive step-search framework developed in \cite{jin2022high} for the unconstrained stochastic setting.
At every iteration, the algorithm constructs a model of the reduction in the merit function that serves the dual purpose of a measure of sufficient progress (part of the step size computation) and a proxy for convergence. 
To mitigate the challenges that arise due to the noise in the objective function evaluations, our step-search method employs a relaxed sufficient decrease condition similar to that proposed in~\cite{BeraByrdNoce19}. 
Under reasonable assumptions, we provide a high probability iteration $\varepsilon$-complexity bound for the proposed algorithm. 
Specifically, we prove that with overwhelmingly high probability, our algorithm generates a first-order $\varepsilon$-stationary iterate in $\mathcal{O}(\varepsilon^{-2})$ iterations, where $\varepsilon$ is bounded away from zero and its lower bound is dictated by the noise and bias in the zeroth- and first-order oracles. 
When exact objective function and gradient values are computable, the $\varepsilon$-complexity bound matches that of the deterministic algorithm  in~\cite{CurtONeiRobi21}. 
In~\cite{CurtONeiRobi21}, the authors also provide non-asymptotic convergence guarantees for the stochastic algorithm proposed in~\cite{BeraCurtRobiZhou21} (for which only asymptotic convergence was proven).
There are two key differences between our contributions and \cite{CurtONeiRobi21}: $(i)$ our algorithm requires access to estimates of the objective function whereas the method in \cite{CurtONeiRobi21} is objective-function-free; and $(ii)$ our first-order oracle provides estimates with sufficient accuracy only with some probability and can provide arbitrarily bad estimates otherwise. 
Finally, experiments on standard nonlinear equality constrained test problems~\cite{gould2015cutest} illustrate the efficiency and efficacy of our proposed algorithm.

\subsection{Notation} Let $\R{}$ denote the set of real numbers, $\R{n}$ denote the set of $n$-dimensional real vectors, $\R{m\times n}$ denote the set of $m$-by-$n$-dimensional real matrices, $\N{}$ denote the set of natural numbers, and $\mathbb{S}^n$ denote the set of $n$-by-$n$-dimensional real symmetric matrices. For any $a\in\R{}$, let $\R{}_{>a}$ ($\R{}_{\geq a}$) denote the set of real numbers strictly larger than (larger than or equal to) $a$. We use $\|\cdot\|$ to denote the $\ell_2$-norm. We use $k\in\N{}$ as the iteration counter of the algorithm, and for brevity, we use a subscript $k$ for denoting information at the $k$th iterate, e.g., $f_k := f(x_k)$. 
We use $\bar{f}(x;\xi^0(x))$ and $\bar{g}(x;\xi^1(x))$ to denote realizations of $\bar{f}(x;\Xi^0(x))$ and $\bar{g}(x;\Xi^1(x))$, respectively; see Section~\ref{sec.oracle}.

\subsection{Organization} The rest of this paper is organized as follows. The algorithmic framework is introduced in Section~\ref{sec.alg}. The analysis of the algorithm is established in Section~\ref{sec.analysis}. 
We report numerical results in  Section~\ref{sec.numerical}. 
Concluding remarks and future research directions are given in  Section~\ref{sec.conclusion}.

\section{Algorithm}\label{sec.alg}

To solve \eqref{eq.prob}, we design an iterative stochastic algorithm based on the SQP paradigm. Every realization of the algorithm generates the following sequences: 
$(i)$ a primal iterate sequence $\{x_k\}$, $(ii)$ a primal trial iterate sequence $\{x_k^+\}$, $(iii)$ a primal search direction sequence $\{\bar{d}_k\}$, $(iv)$ a dual iterate sequence $\{\bar{y}_k\}$, $(v)$ a step size sequence $\{\alpha_k\}$, $(vi)$ a merit parameter sequence $\{\bar{\tau}_k\}$, and, $(vii)$ a trial merit parameter sequence $\{\taubartrial_k\}$.
These aforementioned sequences are realizations of some stochastic process. For example, the primal iterate sequence $\{x_k\}$ is a realization of a stochastic process $\{X_k\}\subset\R{n}$, while the primal trial iterate sequence $\{x_k^+\}$ is a realization of another stochastic process $\{X_k^+\}\subset\R{n}$.
We discuss each of these sequences 
below. We make the following assumption throughout the remainder of this paper. 

\begin{assumption}\label{ass.prob}
Let $\Xcal\subseteq\R{n}$ be an open convex set containing 
the primal iterate sequence $\{X_k\}$ and the primal trial iterate sequence $\{X_k^+\}$.
The objective function $f:\R{n}\to\R{}$ is continuously differentiable and bounded below over $\Xcal$.  The objective gradient function $\nabla f:\R{n}\to\R{n}$ is $L$-Lipschitz continuous and bounded over $\Xcal$.  The constraint function $c:\R{n}\to\R{m}$ (where $m\leq n$) is continuously differentiable and bounded over $\Xcal$, and each gradient $\nabla c_i:\R{n}\to\R{n}$ is $\gamma_i$-Lipschitz continuous and bounded over $\Xcal$ for all $i\in\{1,\ldots,m\}$.  The singular values of $J := \nabla c^T$ are bounded away from zero over $\Xcal$.
\end{assumption}
\sloppy Assumption~\ref{ass.prob} is a standard assumption in the deterministic constrained optimization literature~\cite{byrd2008inexact,NoceWrig06,wachter2005line}. Assumption~\ref{ass.prob} is implicitly about the behavior of the algorithm. In essence, the algorithm converges to something meaningful if it does not happen to generate an unbounded sequence of iterates. This logic and limitation is true for the deterministic case \cite[Assumption G]{wachter2005line} and \cite[Assumption 4.1]{byrd2008inexact}, and is also required in our analysis. 
Under Assumption~\ref{ass.prob}, there exist constants $\{\kappa_g,\kappa_c,\kappa_J,\kappa_{\sigma}\}\subset\R{}_{>0}$ and $f_{\inf}\in\R{}$ such that for all $k\in\N{}$, 
\begin{equation*}
    f_{\inf} \leq f_k,\ \|\nabla f_k\|\leq \kappa_g,\ \|c_k\|_1\leq \kappa_c,\ \|J_k\|\leq \kappa_J, \text{ and }\|(J_kJ_k^T)^{-1}\|\leq \kappa_{\sigma}.
\end{equation*}
We should note that by Assumption~\ref{ass.prob}, linear independence constraint qualifications (LICQ) hold. 
Moreover, under Assumption~\ref{ass.prob}, for all $x\in\R{n}$, $d\in\R{n}$ and $\alpha\in\R{}_{\geq 0}$ it follows that
\begin{equation}\label{eq.lipschitz_continuity}
\begin{aligned}
    f(x+\alpha d) &\leq f(x) + \alpha \nabla f(x)^Td + \tfrac{L}{2}\alpha^2\|d\|^2 \\
    \text{and} \ \ \|c(x+\alpha d)\|_1 &\leq \|c(x) + \alpha \nabla c(x)^Td\|_1 + \tfrac{\Gamma}{2}\alpha^2\|d\|^2, \ \ \text{where $\Gamma = \sum_{i=1}^m\gamma_i$.}
\end{aligned}
\end{equation}

In this paper, we are particularly interested in finding some primal-dual iterate $(x,y)\in\R{n}\times\R{m}$ that satisfies the first-order stationarity conditions of \eqref{eq.prob}. To this end, let $\mathcal{L}: \mathbb{R}^n \times \mathbb{R}^m \rightarrow \mathbb{R}$ be the Lagrangian of~\eqref{eq.prob}, defined as
\begin{equation}\label{eg.lagrangian}
    \mathcal{L}(x,y) = f(x) + y^Tc(x),
\end{equation}
where $y \in \mathbb{R}^m$ are the dual variables. The first-order
stationarity conditions for \eqref{eq.prob}, which are necessary by Assumption~\ref{ass.prob} (due to the inclusion of the LICQ), are
\begin{equation}\label{eq.firstorder}
    0 = \begin{bmatrix}
        \nabla_x \mathcal{L}(x,y) \\
        \nabla_y \mathcal{L}(x,y)
    \end{bmatrix} = \begin{bmatrix}
        \nabla f(x) + \nabla c(x)y \\
        c(x)
    \end{bmatrix}.
\end{equation}

In the remainder of this section we introduce the key algorithmic components: the merit function and its associated models, the search direction computation and merit parameter updating mechanism, and the inexact probabilistic zeroth- and first-order oracles. The main algorithm is Algorithm~\ref{alg.sqp_line_search_practical}.

\subsection{Merit function}
The merit function $\phi:\R{n}\times\R{}_{>0} \to \R{}$ is defined as
\begin{equation}\label{eq.merit_function}
    \phi(x,\tau) := \tau f(x) + \|c(x)\|_1,
\end{equation}
where $\tau\in\R{}_{>0}$, the merit parameter, balances the objective function and the constraint violation. Given the gradient (approximation) $g \in \R{n}$ and a search direction $d\in\R{n}$, the model of merit function  $l:\R{n}\times\R{}_{>0}\times\R{n}\times\R{n}\to\R{}$ is defined as
\begin{equation*}
    l(x,\tau,g,d) := \tau(f(x) + g^Td) + \|c(x) + \nabla c(x)^Td\|_1.
\end{equation*}
Given a search direction $d \in \R{n}$ that satisfies linearized feasibility, i.e., $c(x) + \nabla c(x)^Td = 0$, the reduction in the model of the merit function $\Delta l:\R{n}\times\R{}_{>0}\times\R{n}\times\R{n}\to\R{}$ is defined as
\begin{equation}\label{eq.model_reduction}
\begin{aligned}
    \Delta l(x,\tau,g,d) :=&\  l(x,\tau,g,0) - l(x,\tau,g,d) 
    = -\tau g^Td + \|c(x)\|_1.
\end{aligned}
\end{equation}
We use the reduction in the model of the merit function \eqref{eq.model_reduction} to monitor the progress made by our proposed algorithm. 
We discuss this in more detail in Section~\ref{sec.algorithm_pre}.

\subsection{Algorithmic components}\label{sec.algorithm_pre}
We now establish how to: $(i)$ compute the primal search direction sequence $\{\bar{d}_k\}$, $(ii)$ update the merit parameter sequence $\{\bar\tau_k\}$, and $(iii)$ update the primal iterate sequence $\{x_k\}$, in any realization 
of Algorithm~\ref{alg.sqp_line_search_practical}. These sequences depend on the approximation of the gradient of the objective function 
sequence $\{\bar{g}(x_k;\Xi^1(x_k))\}$. 
For any $x_k\in\R{n}$, let
$\bar{g}(x_k;\xi^1(x_k))$ denote a
realization of $\bar{g}(x_k;\Xi^1(x_k))$. 
Since $\{x_k\}$ is a realization of the stochastic process $\{X_k\}\subset\R{n}$, we define $G_k = \bar{g}(X_k;\Xi^1(X_k))$ with realizations $\bar{g}(x_k;\xi^1(x_k))$.
To simplify the notation, in this subsection we drop the dependence on the randomness, e.g., $\bar{g}_k = \bar{g}(x_k;\xi^1(x_k))$.

For all $k\in\N{}$ in any realization 
of Algorithm~\ref{alg.sqp_line_search_practical}, 
the primal search direction $\bar{d}_k\in\R{n}$ and the dual variable $\bar{y}_k\in\R{m}$ are computed by solving the linear system of equations
\begin{equation}\label{eq.SQP}
    \begin{bmatrix} H_k & J_k^T \\ J_k & 0 \end{bmatrix}\begin{bmatrix} \bar{d}_k \\ \bar{y}_k \end{bmatrix} = - \begin{bmatrix} \bar{g}_k \\ c_k \end{bmatrix},
\end{equation}
where $\{H_k\}$ satisfies the following assumption. 
\begin{assumption}\label{ass.H}
For all $k\in\N{}$, $H_k\in\Smbb^n$ is chosen independently from $G_k$, where $\{G_k\}$ is a stochastic process with realizations $\{\bar{g}_k\}$ (a sequence of gradient estimates). Moreover, there exist constants $\{\kappa_H,\zeta\}\subset\R{}_{>0}$ such that for all $k\in\N{}$ in any realization 
of Algorithm~\ref{alg.sqp_line_search_practical}, $\|H_k\| \leq \kappa_H$ and $u^TH_ku \geq \zeta\|u\|^2$ for any $u\in\Null(J_k)$.
\end{assumption}
It is well known that under Assumptions~\ref{ass.prob} and~\ref{ass.H}, there is a unique solution $(\bar{d}_k,\bar{y}_k)$ to \eqref{eq.SQP}, and, thus, the vectors $\bar{d}_k\in\R{n}$ and $\bar{y}_k\in\R{m}$ are well-defined~\cite{NoceWrig06}.

Next, we present the merit parameter updating mechanism. Given constants $\{\epsilon_{\tau},\sigma\}\subset (0,1)$, for all $k\in\N{}$ in any realization 
of Algorithm~\ref{alg.sqp_line_search_practical}, we compute $\bar\tau_{k}$ via
\begin{equation}\label{eq.tau_update}
    \bar\tau_{k} \leftarrow \bcases \bar\tau_{k-1} \ &\text{if }\bar\tau_{k-1} \leq \taubartrial_k; \\
    \min\left\{(1-\epsilon_{\tau})\bar\tau_{k-1},\taubartrial_k\right\} &\text{otherwise,}\ecases
\end{equation}
where
\begin{equation}\label{eq.tau_trial}
    \taubartrial_k \leftarrow \bcases \infty \ &\text{if }\bar{g}_k^T\bar{d}_k + \max\left\{\bar{d}_k^TH_k\bar{d}_k,0\right\} \leq 0; \\
    \tfrac{(1-\sigma)\|c_k\|_1}{\bar{g}_k^T\bar{d}_k + \max\left\{\bar{d}_k^TH_k\bar{d}_k,0\right\}} &\text{otherwise.} \ecases
\end{equation}
The merit parameter updating mechanism ensures that the sequence of merit parameter values is non-increasing. Moreover, the updating mechanism is designed to ensure that the reduction in the model of the merit function is sufficiently positive. We make these claims concrete in the following lemma.
\begin{lemma}\label{lem.model_reduction}
Suppose Assumptions~\ref{ass.prob} and~\ref{ass.H} hold. Following the merit parameter updating mechanism described in~\eqref{eq.tau_update}--\eqref{eq.tau_trial}, it follows that for all $k\in\mathbb{N}$ in any realization 
of Algorithm~\ref{alg.sqp_line_search_practical}
\begin{equation}\label{eq.model_reduction_cond}
    \Delta l(x_k,\bar\tau_k,\bar{g}_k,\bar{d}_k) \geq \bar\tau_{k} \max\left\{\bar{d}_k^TH_k\bar{d}_k,0\right\} + \sigma\|c_k\|_1.
\end{equation}
Furthermore, if $\bar\tau_k \neq \bar\tau_{k-1}$, then $0 < \bar\tau_{k} \leq (1-\epsilon_{\tau})\bar\tau_{k-1}$.
\end{lemma}
\begin{proof}
By \eqref{eq.tau_update}, we have $\bar\tau_k \leq \taubartrial_k$. Moreover, by  \eqref{eq.model_reduction}, \eqref{eq.tau_update} and \eqref{eq.tau_trial}, it follows that  \eqref{eq.model_reduction_cond} is satisfied for all $k \in \N{}$. By \eqref{eq.tau_update}, if $\bar\tau_k\neq \bar\tau_{k-1}$, then $\bar\tau_k = \min\left\{(1-\epsilon_{\tau})\bar\tau_{k-1},\taubartrial_k\right\} \leq (1-\epsilon_{\tau})\bar\tau_{k-1}$. Moreover, when $c_k = 0$, it follows from Assumption~\ref{ass.H}, \eqref{eq.SQP} and \eqref{eq.tau_trial} that $\bar{d}_k\in\Null(J_k)$ and $\bar{g}_k^T\bar{d}_k + \max\{\bar{d}_k^TH_k\bar{d}_k,0\} = \bar{g}_k^T\bar{d}_k + \bar{d}_k^TH_k\bar{d}_k = c_k^T\bar{y}_k = 0$, which implies $\taubartrial_k = \infty$. Therefore, we have $\taubartrial_k > 0$ for all $k\in\N{}$. Finally, by $\bar\tau_{-1}\in\R{}_{>0}$ and \eqref{eq.tau_update}, we have $\bar\tau_k > 0$ for all $k\in\N{}$.
\end{proof}
\noindent
We note that in the deterministic setting, the reduction in the model of the merit function is zero only at iterates that satisfy~\eqref{eq.firstorder}.

At each iteration $k\in\N{}$ in any realization 
of Algorithm~\ref{alg.sqp_line_search_practical}, after
updating the merit parameter $\bar\tau_k$, we evaluate $\Delta l(x_k,\bar\tau_k,\bar{g}_k,\bar{d}_k)$,  the stochastic model reduction of
the merit function, and use it to check for sufficient progress. Specifically, given a step size $\alpha_k$, we compute a candidate iterate $x_k^+ := x_k + \alpha_k \bar{d}_k$ and  check whether sufficient progress can be made via the modified sufficient decrease condition 
\begin{equation}\label{eq.line_serach_cond_old_old}
    \bar\phi(x_k^+,\bar\tau_{k}; \xi^0(x_k^+)) \leq \bar\phi(x_k,\bar\tau_{k};\xi^0(x_k)) - \alpha_k\theta\Delta l(x_k,\bar\tau_{k},\bar{g}_k,\bar{d}_k)  + 2\bar \tau_k\epsilon_f,
\end{equation}
where $\bar\phi(x_k^+,\bar\tau_{k}; \xi^0(x_k^+))$ and $\bar\phi(x_k,\bar\tau_{k};\xi^0(x_k))$ are merit function estimates, $\theta \in (0,1)$ is a user-defined parameter and $\epsilon_f$ is an upper bound on the expected noise in the objective function approximations. 
We note that $\bar\phi(x_k^+,\bar\tau_{k}; \xi^0(x_k^+))$ and  $\bar\phi(x_k,\bar\tau_{k};\xi^0(x_k))$ are realizations of the zeroth-order oracle described in detail in Section~\ref{sec.oracle}. 
The positive term on the right-hand-side allows for a relaxation in the sufficient decrease condition, i.e., the merit function may increase after a step, and serves to correct for the noise in the merit function approximations.
If \eqref{eq.line_serach_cond_old_old} is satisfied, we accept the candidate point $x_k^+$ by setting $x_{k+1}\leftarrow x_k^+$, and potentially increase the step size for the next iteration, i.e.,  $\alpha_{k+1}\geq \alpha_k$. If \eqref{eq.line_serach_cond_old_old} is not satisfied, the algorithm does not accept the candidate iterate, instead, it sets $x_{k+1}\leftarrow x_k$ and shrinks the step size for the next iteration, i.e., $\alpha_{k+1} < \alpha_k$. This step update rule is the centerpiece of our step-search method, and is 
fundamentally different from traditional line-search strategies; see \cite{CartSche18,BeraCaoSche21,jin2022high}. Contrary to line-search methods, which compute a search direction and then look for a step size along that direction, in our approach the search direction changes in every iteration (even when a step is not taken).

We conclude this section by drawing a few parallels to the unconstrained setting. First, in the unconstrained setting (with $H_k = I$), the quantity $\Delta l(x_k,\bar\tau_k,\bar{g}_k,\bar{d}_k)$ reduces to $\|\bar{g}_k\|^2$, which provides a sufficient descent measure and is an approximate first-order stationarity measure. In the constrained setting, the reduction in the model of the merit function will play a similar role. Second, in the unconstrained setting, \eqref{eq.line_serach_cond_old_old} recovers the sufficient decrease condition used by a class of noisy line-/step-search unconstrained optimization algorithms; see e.g., \cite{BeraByrdNoce19,BeraCaoSche21,jin2022high}.

\subsection{Probabilistic oracles}\label{sec.oracle}

In many real-world applications exact objective function and derivative information cannot be readily computed. Instead, in lieu of these quantities, approximations are available via inexact probabilistic zeroth- and first-order oracles. These oracles produce approximations of different accuracy and reliability, and are formally introduced below.

\begin{customoracle}{0}[\textbf{Probabilistic zeroth-order oracle}]\label{oracle.zero} Given $x\in\R{n}$, the oracle computes $\bar{f}(x; \xi^0(x))$, a realization of $\bar{f}(x; \Xi^0(x))$, which is a (random) estimate of the objective function value $f(x)$, where $\Xi^0(x)$ denotes the underlying randomness (may depend on $x$) with associated probability space $\left(\Omega, \mathcal{F}_{\Omega}, P\right)$. Let  $\change{E(x;\Xi^0(x))}:=|\bar{f}(x; \Xi^0(x))-f(x)|$. For any $x\in\R{n}$, $\change{E(x;\Xi^0(x))}$ is a ``one-sided" sub-exponential random variable with parameters $\{\nu,b\}\subset\R{}_{\geq0}$, 
whose mean is bounded by some constant $\epsilon_{f}\in\R{}_{\geq 0}$. 
Specifically,
for all $x\in\R{n}$ and $\lambda\in [0,1/b]$, 
\begin{equation}\label{eq:zero_order}
\begin{aligned}
	&\Embb_{\Xi^0(x)}\left[\change{E(x;\Xi^0(x))}\right] \leq \epsilon_f  \\
	\text{ and } \; &\Embb_{\Xi^0(x)}\left[\exp( \lambda(\change{E(x;\Xi^0(x))} - \change{\Embb_{\Xi^0(x)}\left[E(x;\Xi^0(x))\right]}) )\right] \leq \exp\left(\tfrac{\lambda^2\nu^2}{2}\right).
\end{aligned}
\end{equation} 
The stochastic approximation of the merit function value is defined as   $\bar{\phi}(x, \tau; \xi^0(x))= \tau \bar{f}(x; \xi^0(x)) + \norm{c(x)}_1$.
\end{customoracle}

\begin{customoracle}{1}[\textbf{Probabilistic first-order oracle}]\label{oracle.first}
Given $x\in\R{n}$ and $\alpha\in\R{}_{>0}$, the oracle computes $\bar{g}(x;\xi^1(x))$, a realization of $\bar{g}(x; \Xi^1(x))$, which is a (random) estimate of the gradient of the objective function 
$\nabla f(x)$, such that 
\begin{equation}\label{eq:first_order}
\begin{aligned}
	\Pmbb_{\Xi^1(x)} \Bigg[&\|\bar{g}(x;\Xi^1(x)) - \nabla f(x)\| \leq  \\
	& \qquad \max\left\{ \epsilon_{g}, \kappa_{\mathrm{FO}} \alpha \sqrt{\Delta l(x,\bar\tau(x;\Xi^1(x)),\bar{g}(x;\Xi^1(x)),\bar{d}(x;\Xi^1(x)))} \right\}\Bigg]\geq 1-\delta,
\end{aligned}
\end{equation}
where $\Xi^1(x)$ denotes the underlying randomness (may depend on $x$) with associated probability space $(\Omega, \mathcal{F}_{\Omega}, P)$, $(1-\delta) \in (\tfrac 12,1]$ is the probability that the oracle produces a gradient estimate that is ``sufficiently accurate'' (the reliability of the oracle) and $\{\epsilon_{g},\kappa_{\mathrm{FO}}\} \subset \R{}_{\geq 0}$ 
are constants intrinsic to the oracle (the precision of the oracle).
\end{customoracle}

In the rest of the paper, to simplify notation we drop the dependence on $x$ in \change{$\Xi^0(x)$ (\change{resp., }$\xi^0(x)$) and $\Xi^1(x)$ (\change{resp., }$\xi^1(x)$)}. Moreover, we use \change{$(\Xi_k^0,\Xi_k^+,\Xi_k^1)$ and $(\xi_k^0,\xi_k^+,\xi_k^1)$ to represent $(\Xi^0(x_k),\Xi^0(x_k^+),\Xi^1(x_k))$ and $(\xi^0(x_k),\xi^0(x_k^+),\xi^1(x_k))$, respectively.} 

\begin{remark} We make a few remarks about Oracles~\ref{oracle.zero} and \ref{oracle.first}:
\begin{itemize}[wide]
    \item Oracles~\ref{oracle.zero} and \ref{oracle.first} are similar to those defined in \cite{cao2022first,jin2022high}. For a full discussion and examples of the oracles, we refer interested readers to \cite[Section~5]{jin2022high}.
    \item Oracle~\ref{oracle.first} generalizes the ones defined in~\cite{cao2022first,jin2022high} to the equality constrained setting. 
    Indeed, the right-hand-side of Oracle~\ref{oracle.first} reduces to  $\max\left\{ \epsilon_g, \kappa_{\mathrm{FO}}\alpha\norm{\bar{g}(x;\Xi^1)} \right\}$ in the unconstrained setting, and is precisely what is used in~\cite{cao2022first,jin2022high}.
    \item The presence of $\epsilon_g\in\mathbb{R}_{\geq 0}$ in the max term in Oracle~\ref{oracle.first} allows the gradient approximations to be biased; the magnitude of the bias is proportional to $\epsilon_g$.
\end{itemize}
\end{remark}


\subsection{Algorithmic framework}

We are ready to introduce our stochastic step-search SQP  method (\SSSPQ) in Algorithm~\ref{alg.sqp_line_search_practical}. 

\balgorithm[ht]
  \caption{Adaptive Step-Search SQP (\SSSPQ{})}
  \label{alg.sqp_line_search_practical}
  \balgorithmic[1]
    \Require initial iterate $x_0\in\R{n}$; initial merit parameter $\bar\tau_{-1}\in\R{}_{>0}$; maximum step size $\alpha_{\max}\in (0,1]$; initial step size $\alpha_0 \in (0,\alpha_{\max}]$; parameter $\epsilon_f \in\R{}_{\geq 0}$ 
    of the zeroth-order oracle (Oracle~\ref{oracle.zero}); and other parameters $\{\gamma,\theta,\sigma,\epsilon_{\tau}\}\subset (0,1)$
	\For{\textbf{all} $k \in \N{}$}
	\State Generate $\bar{g}_k = \bar{g}(x_k;\xi_k^1)$ via Oracle~\ref{oracle.first} with $\alpha = \alpha_k$, $\bar{d}_k = \bar{d}(x_k;\xi_k^1)$ as in \eqref{eq.SQP}, and $\bar\tau_{k} = \bar\tau(x_k;\xi_k^1)$ as in \eqref{eq.tau_update}--\eqref{eq.tau_trial} \label{line.gradient_generation}
	\State Let $x_k^+ = x_k + \alpha_k\bar{d}_k$, and generate $\bar\phi(x_k,\bar\tau_{k};\xi_k^0)$ and $\bar\phi(x_k^+,\bar\tau_{k};\xi_k^+)$ via Oracle~\ref{oracle.zero}\label{line.trial}
	\If{\eqref{eq.line_serach_cond_old_old} holds} 
	\State Set $x_{k+1}\leftarrow x_k^+$ and $\alpha_{k+1}\leftarrow \min\{ \alpha_{\max}, \gamma^{-1} \alpha_k \}$ 
	\Else
	\State Set $x_{k+1}\leftarrow x_k$ and $\alpha_{k+1} \leftarrow \gamma\alpha_k$ 
	\EndIf
	\EndFor
  \ealgorithmic
\ealgorithm

\begin{remark}
We make the following remarks about \SSSPQ{}:
\begin{itemize}[wide]
	\item (Step-search) Algorithm~\ref{alg.sqp_line_search_practical} is a step-search algorithm, whose main difference from traditional line-search methods is that only a single trial iterate is tested at every iteration. That is, if \eqref{eq.line_serach_cond_old_old} is not satisfied, the step size is reduced and a new search direction and candidate iterate are  computed in the next iteration. This strategy has been employed in other papers; e.g., see~\cite{CartSche18,BeraCaoSche21,jin2022high,NaAnitKola22}. We should note that at every iteration, even if the iterate does not change, our algorithm requires new objective function and gradient estimates in the next iteration.
	\item (Modified sufficient decrease condition \eqref{eq.line_serach_cond_old_old}) The $2\bar \tau_k\epsilon_f$ term on the right-hand-side of \eqref{eq.line_serach_cond_old_old} is a correction term added to compensate for the inexactness of the probabilistic zeroth-order oracle (Oracle~\ref{oracle.zero}). 
	This correction provides a relaxation to the sufficient decrease requirement. In contrast to traditional sufficient decrease conditions, the modified condition~\eqref{eq.line_serach_cond_old_old} allows for a relaxation that is proportional to the noise level of Oracle~\ref{oracle.zero}.
	\item (Objective function evaluations; Line~\ref{line.trial}) The randomness associated with the evaluation of the objective function value at the candidate iterate $x_k^+$ (Line~\ref{line.trial}) is not the same as that of the evaluation at the current point $x_k$. That is, the evaluation of the objective function at the candidate iterate $x_k^+$ is independent of the evaluation of the objective function at the current iterate $x_k$. We further note that, in fact, one only needs the noise in the difference of the function estimates, i.e., $\abs{\bar f(x_k,\Xi^0_k) - \bar f(x_k^+,\Xi_k^+) -(f(x_k) - f(x_k^+)) }$, to be sub-exponential, since this is the fundamental quantity that needs to be controlled in the analysis.
    Moreover, we note that even for \textbf{unsuccessful} iterations (where the iterates do not change) the objective function values are re-evaluated.
	\item (Objective gradient evaluations; Line~\ref{line.gradient_generation}) To generate an estimate of the  
	gradient of the objective function that satisfies the conditions of Oracle~\ref{oracle.first}, one can employ a procedure (a loop) similar to~\cite[Algorithm 2]{scheinberg2022stochastic}. The idea is to refine the estimate progressively in order to generate one that satisfies the condition. In many real-world problems, including empirical risk minimization in machine learning, one can improve the gradient approximation by progressively using a larger number of samples.
    \item (Maximum step size $\alpha_{\max}$) We pick $\alpha_{\max}\in(0,1]$ mainly to simplify our analysis. That being said, the unit upper bound on $\alpha_{\max}$ is motivated by the deterministic constraint setting. In the deterministic setting (without any noise), the merit function decrease is upper bounded by a nonsmooth function, whose only point of nonsmothness is at $\alpha = 1$, which complicates the analysis; see \cite[Lemma~2.13]{BeraCurtRobiZhou21}.
    \item (Parameters $\epsilon_f$ and $\epsilon_g$ in Oracles~\ref{oracle.zero} and~\ref{oracle.first}) To implement and run Algorithm~\ref{alg.sqp_line_search_practical}, (an estimate of) $\epsilon_f$ (bias in Oracle \ref{oracle.zero}) is required and $\epsilon_g$ (bias in Oracle~\ref{oracle.first}) is not. The noise in the objective gradient estimates is an intrinsic quantity of Oracle~\ref{oracle.first}, and while it is not required for implementing Algorithm~\ref{alg.sqp_line_search_practical}, it plays a central role in the analysis as it defines the neighborhood of convergence. With regards to $\epsilon_f$ (the noise in the objective function estimates), 
    the algorithm requires an estimate (or upper bound) of this quantity. In practice, this can be estimated via sampling or other techniques, e.g., \cite{hamming2012introduction}. Finally, we should note that similar constants (oracle conditions) are required for many of the stochastic adaptive line-/step-search (and trust region) methods in the unconstrained setting; see e.g., \cite{BeraCaoSche21,BeraByrdNoce19,OztoByrdNoce21,cao2022first,sun2023trust}. It is reasonable to expect that these requirements extend to the constrained stochastic setting.
\end{itemize}
\end{remark}

Before we proceed, we define the stochastic process related to the algorithm. Let $M_k$ denote $\{\Xi_k^0, \Xi_k^+, \Xi_k^{1}\}$ with realizations $\{\xi_k^0, \xi_k^+, \xi_k^1\}$. The algorithm generates a stochastic process: 
$\{({G}_k, D_k, \mathcal{T}_k, \bar\phi(X_k,\mathcal{T}_k;\Xi_k^0), \bar\phi(X_k^+,\mathcal{T}_k;\Xi_k^+), X_k, A_k)\}$  with realizations $\{(\bar g_k, \bar d_k, \bar\tau_k, \bar\phi(x_k,\bar\tau_k;\xi_k^0), \bar\phi(x_k^+,\bar\tau_k;\xi_k^+), x_k, \alpha_k)\}$, adapted to the filtration $\{{\cal F}_k:\, k\geq 0\}$, where ${\cal F}_k=\boldsymbol{\sigma} (M_0, M_1, \ldots, M_k)$ and $\boldsymbol{\sigma}$ denotes the $\boldsymbol{\sigma}$-algebra. 
At iteration $k$, $G_k$ is the random gradient, $D_k$ is the random primal search direction, $\mathcal{T}_k$ is the random merit parameter, $\bar\phi(X_k,\mathcal{T}_k;\Xi_k^0)$ and $\bar\phi(X_k^+,\mathcal{T}_k;\Xi_k^+)$ are the random noisy merit function evaluations at the current point and the candidate point, respectively, $X_k$ is the random iterate at iteration $k$ and $A_k$ is the random step size. Note that ${G}_k, D_k, \mathcal{T}_k$ are dictated by $\Xi_k^1$ (Oracle~\ref{oracle.first}) and the noisy merit function evaluations are dictated by $\Xi_k^0$ and $\Xi_k^+$ (Oracle~\ref{oracle.zero}).

\section{Theoretical analysis}\label{sec.analysis}
In this section, we analyze the behavior of Algorithm~\ref{alg.sqp_line_search_practical}. For brevity, throughout this section, we assume  Assumptions~\ref{ass.prob} and~\ref{ass.H} hold and do not restate this fact in every lemma and theorem. We begin by presenting some preliminary results, definitions, and assumptions and then proceed to present a worst-case iteration complexity bound for Algorithm~\ref{alg.sqp_line_search_practical}.

\subsection{Preliminaries, definitions \& assumptions}

We first define some \emph{deterministic} quantities that are used in the analysis of Algorithm~\ref{alg.sqp_line_search_practical}, and which are never explicitly computed in the implementation of the algorithm. 
For all $k\in\N{}$ in any realization 
of Algorithm~\ref{alg.sqp_line_search_practical}, let
$(\dTrue_k,\yTrue_k)\in\R{n}\times\R{m}$ be the solution of the deterministic counterpart of \eqref{eq.SQP}, i.e., 
\begin{equation}\label{eq.SQP_det}
    \begin{bmatrix} H_k & J_k^T \\ J_k & 0 \end{bmatrix}\begin{bmatrix} \dTrue_k \\ \yTrue_k \end{bmatrix} = - \begin{bmatrix} \nabla f_k \\ c_k \end{bmatrix}.
\end{equation}
The norm of the gradient of the Lagrangian (defined in~\eqref{eg.lagrangian}) of~\eqref{eq.prob}, used as a first-order stationarity measure, is bounded at every primal-dual iterate $(x_k,y_k)$ as
\begin{equation}\label{eq.stationarity_measurement}
    \left\|\bbmatrix \nabla f_k + J_k^T\yTrue_k \\ c_k \ebmatrix \right\| = \left\|\bbmatrix -H_k\dTrue_k \\ -J_k\dTrue_k \ebmatrix\right\| \leq (\kappa_H + \kappa_J)\|\dTrue_k\|,
\end{equation}
where the equality is by \eqref{eq.SQP_det} and the inequality follows by Assumptions~\ref{ass.prob} and~\ref{ass.H}. Thus,  \eqref{eq.stationarity_measurement} implies that $\dTrue_k$, the primal search direction, can be used as a proxy of the first-order stationary measure. The following lemma shows that the tuple $(\dTrue_k,\yTrue_k)$ is bounded for all $k\in\N{}$ in any realization 
of Algorithm~\ref{alg.sqp_line_search_practical}.
\begin{lemma}\label{lem.dy_bound}
There exist constants $\{\kappa_d,\kappa_y\}\subset\R{}_{>0}$ such that $\|\dTrue_k\| \leq \kappa_d$ and $\|\yTrue_k\| \leq \kappa_y$ for all $k\in\N{}$ in any realization 
of Algorithm~\ref{alg.sqp_line_search_practical}.
\end{lemma}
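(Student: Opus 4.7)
The plan is to decompose the primal search direction $\dTrue_k$ into its normal and tangential components with respect to $\Null(J_k)$, bound each component separately using the hypotheses of Assumptions~\ref{ass.prob} and~\ref{ass.H}, and then recover the bound on $\yTrue_k$ from the first block row of \eqref{eq.SQP_det}. All the necessary uniform bounds on $\nabla f_k$, $c_k$, $J_k$, $(J_kJ_k^T)^{-1}$, $H_k$, and the curvature constant $\zeta$ are already available, so the argument is essentially linear algebra plus the null-space positive definiteness of $H_k$.

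\textbf{Step 1 (normal component).} From the second block row of \eqref{eq.SQP_det}, $J_k\dTrue_k=-c_k$. Define the normal component $u_k:=-J_k^T(J_kJ_k^T)^{-1}c_k$, which satisfies $J_ku_k=-c_k$. Using the uniform bounds yields $\|u_k\|\leq \|J_k\|\,\|(J_kJ_k^T)^{-1}\|\,\|c_k\|\leq \kappa_J\kappa_\sigma\kappa_c$. Then write $\dTrue_k=u_k+v_k$ with $v_k\in\Null(J_k)$.

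\textbf{Step 2 (tangential component).} Left-multiplying the first block row $H_k\dTrue_k+J_k^T\yTrue_k=-\nabla f_k$ by $v_k^T$ eliminates $\yTrue_k$ (since $J_kv_k=0$), giving
\begin{equation*}
v_k^TH_kv_k = -v_k^TH_ku_k - v_k^T\nabla f_k.
\end{equation*}
Applying $v_k^TH_kv_k\geq\zeta\|v_k\|^2$ on the left and Cauchy--Schwarz together with the uniform bounds $\|H_k\|\leq\kappa_H$ and $\|\nabla f_k\|\leq\kappa_g$ on the right and dividing by $\|v_k\|$ (trivial case $v_k=0$ handled separately) yields $\|v_k\|\leq (\kappa_H\|u_k\|+\kappa_g)/\zeta$. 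Combining with Step~1,
\begin{equation*}
\|\dTrue_k\|\leq \|u_k\|+\|v_k\|\leq \kappa_J\kappa_\sigma\kappa_c + \tfrac{1}{\zeta}\bigl(\kappa_H\kappa_J\kappa_\sigma\kappa_c+\kappa_g\bigr) =: \kappa_d.
\end{equation*}

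\textbf{Step 3 (dual bound).} Left-multiply the first block row by $J_k$ to get $J_kJ_k^T\yTrue_k=-J_k\nabla f_k-J_kH_k\dTrue_k$, so
\begin{equation*}
\yTrue_k = -(J_kJ_k^T)^{-1}\bigl(J_k\nabla f_k+J_kH_k\dTrue_k\bigr).
\end{equation*}
Taking norms and using the uniform bounds together with the freshly derived $\|\dTrue_k\|\leq\kappa_d$ gives
\begin{equation*}
\|\yTrue_k\|\leq \kappa_\sigma\kappa_J\bigl(\kappa_g+\kappa_H\kappa_d\bigr)=:\kappa_y.
\end{equation*}

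\textbf{Expected obstacles.} This lemma is essentially routine once one performs the null-space/range-space decomposition, so there is no substantial obstacle. The only mild subtlety is remembering that the curvature condition on $H_k$ holds only on $\Null(J_k)$, which is precisely why the tangential component $v_k$ must be isolated before applying $v_k^TH_kv_k\geq\zeta\|v_k\|^2$; plugging $\dTrue_k$ directly into that inequality would be invalid because $\dTrue_k$ need not lie in $\Null(J_k)$.
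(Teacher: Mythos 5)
Your proof is correct, and it takes a genuinely different route from the paper's. The paper's argument is a one-line inversion of the KKT system \eqref{eq.SQP_det}: it writes $(\dTrue_k,\yTrue_k)$ as the inverse KKT matrix applied to $(\nabla f_k, c_k)$, bounds the product by Cauchy--Schwarz, and simply asserts that the norm of the inverse KKT matrix is uniformly bounded under Assumptions~\ref{ass.prob} and~\ref{ass.H} (a standard but unproved-in-place fact). Your range-space/null-space decomposition instead makes that boundedness claim constructive: you exhibit the particular solution $u_k=-J_k^T(J_kJ_k^T)^{-1}c_k$ of the constraint block, isolate the tangential component $v_k\in\Null(J_k)$ so that the curvature condition $v_k^TH_kv_k\geq\zeta\|v_k\|^2$ can legitimately be applied (as you correctly flag, it holds only on $\Null(J_k)$), and then read off $\yTrue_k$ from the normal equations. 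The payoff of your version is explicit constants $\kappa_d$ and $\kappa_y$ in terms of $\kappa_g,\kappa_c,\kappa_J,\kappa_\sigma,\kappa_H,\zeta$, and a self-contained justification of the uniform invertibility that the paper leaves implicit; the cost is length. One cosmetic remark: the paper bounds $\|c_k\|_1\leq\kappa_c$ rather than $\|c_k\|$, but since $\|c_k\|\leq\|c_k\|_1$ your use of $\kappa_c$ is fine.
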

\begin{proof}
By the Cauchy–Schwarz inequality and \eqref{eq.SQP_det}, we have
\begin{equation*}
\left\|\bbmatrix \dTrue_k \\ \yTrue_k \ebmatrix\right\| = \left\|\bbmatrix H_k & J_k^T \\ J_k & 0 \ebmatrix^{-1}\bbmatrix \nabla f_k \\ c_k \ebmatrix\right\| \leq \left\|\bbmatrix H_k & J_k^T \\ J_k & 0 \ebmatrix^{-1}\right\|\left\|\bbmatrix \nabla f_k \\ c_k \ebmatrix\right\|,
\end{equation*}
where both terms on the right-hand side of the inequality are bounded by Assumptions~\ref{ass.prob} and~\ref{ass.H}, which concludes the proof.
\end{proof}

Moreover, for all $k\in\N{}$ in any realization 
of Algorithm~\ref{alg.sqp_line_search_practical}, we define $\tauTrue_{k} \in\R{}_{>0}$ and $\tautrialTrue_k \in\R{}_{>0}$, the deterministic counterparts of \eqref{eq.tau_update} and \eqref{eq.tau_trial},
\begin{equation}\label{eq.tautrue_update}
    \tauTrue_{k} \leftarrow \bcases \bar\tau_k \ &\text{if }\bar\tau_k \leq \tautrialTrue_k; \\
    \min\left\{(1-\epsilon_{\tau})\bar\tau_k,\tautrialTrue_k\right\} &\text{otherwise,}\ecases
\end{equation}
where
\begin{equation}\label{eq.tautrue_trial}
    \tautrialTrue_k \leftarrow \bcases \infty \ &\text{if }\nabla f_k^T\dTrue_k + \max\left\{\dTrue_k^TH_k\dTrue_k,0\right\} \leq 0; \\
    \tfrac{(1-\sigma)\|c_k\|_1}{\nabla f_k^T\dTrue_k + \max\left\{\dTrue_k^TH_k\dTrue_k,0\right\}} &\text{otherwise.} \ecases
\end{equation}
We emphasize again that $\{(\tauTrue_k,\tautrialTrue_k)\}_{k\in\N{}}$ are introduced only for the purposes of the analysis, and in Algorithm~\ref{alg.sqp_line_search_practical} they are never computed (not even in the setting in which the true gradient is used, i.e., $\bar{g}_k = \nabla f(x_k)$). We also note that this definition is not the same as that in~\cite{BeraCurtRobiZhou21,CurtONeiRobi21}. The difference is in the fact that in the computation of $\tauTrue_k$, the comparison is made to $\bar\tau_k$ instead of $\bar\tau_{k-1}$. This is important for the analysis, since this guarantees $\tau_k \leq \bar{\tau}_k$, for all $k\in\N{}$ in any realization 
of Algorithm~\ref{alg.sqp_line_search_practical}.

\change{We analyze Algorithm~\ref{alg.sqp_line_search_practical} within the context of a specific event occurring that pertains to the merit parameters. 
Specifically, we consider the event in which the stochastic process associated with the merit parameter sequence $\{\mathcal{\Tau}_k\}$ (with realizations $\{\bar{\tau}_k\}$) generated by Algorithm \ref{alg.sqp_line_search_practical} is bounded away from zero (Assumption~\ref{ass.good_merit_paramter}). Such an event has been adopted in previous literature \cite{BeraCurtRobiZhou21,BeraCurtONeiRobi21,BeraShiYiZhou22,CurtONeiRobi21,CurtRobiZhou21}; we refer readers to \cite[Section~3.2]{BeraCurtRobiZhou21} and \cite[Section~4.2]{CurtONeiRobi21} for detailed discussions. The assumption on the occurrence of such an event is also supported by our numerical experiments; see the discussion in Section~\ref{sec.noisy_noisy}. We emphasize that the implementation of the algorithm does not require knowledge of $\bar\tau_{\min}$ (see Assumption \ref{ass.good_merit_paramter} for the definition). Moreover, if the stochastic gradients happen to be uniformly bounded, then one can show that the merit parameter sequence is bounded away from zero under Assumptions~\ref{ass.prob} and~\ref{ass.H}; see e.g.,  \cite[Proposition~3.18]{BeraCurtRobiZhou21} and \cite[Lemma~4.6]{BeraShiYiZhou22}.}

\change{
\begin{assumption}\label{ass.good_merit_paramter}
Event $\mathcal{E}:=\mathcal{E}(\bar{k}_{\max},\bar\tau_{\min})$ occurs, with given constants $\bar{k}_{\max}\in\N{}$ and $\bar\tau_{\min}\in\R{}_{>0}$, in the sense that there exists $\mathcal{\Tau}'\in\R{}_{>0}$ such that
\begin{equation*}
    \mathcal{\Tau}_k = \mathcal{\Tau}' \geq \bar\tau_{\min}>0, \quad\text{ for all }\quad k\geq \bar{k}_{\max},
\end{equation*}
where $\{\mathcal{\Tau}_k\}$ is a stochastic process with realizations $\{\bar\tau_k\}$, a sequence of merit parameter values, generated by Algorithm \ref{alg.sqp_line_search_practical}. Moreover, conditioned on the occurrence of the event $\mathcal{E}$, \eqref{eq:zero_order}--\eqref{eq:first_order} hold with the same constants for all primal iterates $\{X_k\}$ (with realization $\{x_k\}$) and all primal trial iterates $\{X_k^+\}$ (with realization $\{x_k^+\}$) generated by Algorithm~\ref{alg.sqp_line_search_practical}. 
\end{assumption}
}



\change{Throughout the remainder of the paper, we assume that the event $\mathcal{E}$ occurs, meaning that Assumption \ref{ass.good_merit_paramter} holds.} Next, we state and \change{prove} a useful property with regards to the deterministic merit parameter sequence $\{\tauTrue_k\}$ defined in~\eqref{eq.tautrue_update}.

\begin{lemma}\label{lem.deter_tau_lb}
Suppose Assumption~\ref{ass.good_merit_paramter} holds, then there exists a positive constant $\tauTrue_{\min}\in\R{}_{>0}$ such that, for any realization 
of Algorithm~\ref{alg.sqp_line_search_practical} \change{for which event $\mathcal{E}$ occurs}, $\tauTrue_k \geq \tauTrue_{\min}$ for all $k\in\mathbb{N}$.
\end{lemma}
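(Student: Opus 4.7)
The plan is to reduce the claim to showing a uniform positive lower bound on $\tautrialTrue_k$. From~\eqref{eq.tautrue_update}, $\tauTrue_k$ equals either $\bar\tau_k$, $(1-\epsilon_\tau)\bar\tau_k$, or $\tautrialTrue_k$; Assumption~\ref{ass.good_merit_paramter} already yields $\bar\tau_k \geq \bar\tau_{\min}$, so the first two possibilities are bounded below by $(1-\epsilon_\tau)\bar\tau_{\min} > 0$. It therefore suffices to exhibit a deterministic constant $\widehat{\tau} > 0$ such that $\tautrialTrue_k \geq \widehat{\tau}$ whenever $\tautrialTrue_k < \infty$; the case $\tautrialTrue_k = \infty$ is trivial.

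The cornerstone is the identity
\begin{equation*}
    \nabla f_k^T \dTrue_k + \dTrue_k^T H_k \dTrue_k = c_k^T \yTrue_k,
\end{equation*}
obtained by left-multiplying the top block of~\eqref{eq.SQP_det} by $\dTrue_k^T$ and using $J_k \dTrue_k = -c_k$. I would then split into two cases according to the sign of $\dTrue_k^T H_k \dTrue_k$. If $\dTrue_k^T H_k \dTrue_k \geq 0$, the denominator of $\tautrialTrue_k$ collapses to $c_k^T \yTrue_k$, and Hölder's inequality together with Lemma~\ref{lem.dy_bound} yields $c_k^T \yTrue_k \leq \|c_k\|_1 \|\yTrue_k\|_\infty \leq \kappa_y \|c_k\|_1$, giving $\tautrialTrue_k \geq (1-\sigma)/\kappa_y$.

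The harder case is $\dTrue_k^T H_k \dTrue_k < 0$, in which the $H_k$-term drops out of the $\max$ in the denominator and the identity above can no longer be used directly. My approach is to decompose $\dTrue_k = u_k + v_k$ orthogonally with $u_k \in \mathrm{Null}(J_k)$ and $v_k \in \mathrm{Range}(J_k^T)$. The range-space piece satisfies $J_k v_k = -c_k$, hence $v_k = -J_k^T (J_k J_k^T)^{-1} c_k$ and $\|v_k\| \leq \kappa_J \kappa_\sigma \|c_k\|_1$ by Assumption~\ref{ass.prob}. Using Assumption~\ref{ass.H} together with Young's inequality on the cross term $2 u_k^T H_k v_k$ produces a bound of the form $\dTrue_k^T H_k \dTrue_k \geq (\zeta/2)\|u_k\|^2 - C_1 \|v_k\|^2$ with an explicit constant $C_1 = 2\kappa_H^2/\zeta + \kappa_H$. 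Assumed negativity then forces $\|u_k\| \leq \sqrt{2C_1/\zeta}\,\|v_k\|$, and combining with the bound on $\|v_k\|$ yields $\|\dTrue_k\| \leq C \|c_k\|_1$ for a constant $C$ depending only on the problem data. Consequently $\nabla f_k^T \dTrue_k \leq \kappa_g C \|c_k\|_1$, and the $\|c_k\|_1$ factor cancels to give $\tautrialTrue_k \geq (1-\sigma)/(\kappa_g C)$. Taking $\tauTrue_{\min} := \min\{(1-\epsilon_\tau)\bar\tau_{\min},\,(1-\sigma)/\kappa_y,\,(1-\sigma)/(\kappa_g C)\}$ then closes the argument. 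The main obstacle is precisely this second case: the negativity of $\dTrue_k^T H_k \dTrue_k$ strips the algebraically convenient $H_k$ term from the denominator, and one must instead exploit the structural fact that indefiniteness of $H_k$ along $\dTrue_k$ forces the range-space (and hence $\|c_k\|_1$-controlled) component to dominate the step.
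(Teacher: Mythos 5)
Your proof is correct, but it is considerably more self-contained than the paper's. The paper disposes of the key step --- a uniform positive lower bound $\tautrialTrue_{\min}$ on $\{\tautrialTrue_k\}$ --- by citing \cite[Lemma~2.16]{BeraCurtRobiZhou21}, and then, exactly as you do, combines that bound with \eqref{eq.tautrue_update} and Assumption~\ref{ass.good_merit_paramter} to set $\tauTrue_{\min}$ as a minimum of $(1-\epsilon_\tau)$-scaled quantities and $\bar\tau_{\min}$. What you have done is reconstruct the content of that cited lemma from first principles: the identity $\nabla f_k^T\dTrue_k + \dTrue_k^TH_k\dTrue_k = c_k^T\yTrue_k$ obtained from \eqref{eq.SQP_det}, the easy case $\dTrue_k^TH_k\dTrue_k \geq 0$ handled via $c_k^T\yTrue_k \leq \kappa_y\|c_k\|_1$, and the case $\dTrue_k^TH_k\dTrue_k < 0$ handled by the null-space/range-space splitting $\dTrue_k = u_k + v_k$, where $\|v_k\| \leq \kappa_J\kappa_\sigma\|c_k\|_1$ and the assumed negativity forces $\|u_k\| = O(\|v_k\|)$ so that $\nabla f_k^T\dTrue_k \leq \kappa_g C\|c_k\|_1$ and the $\|c_k\|_1$ factors cancel. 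All the individual steps check out (the degenerate situation $c_k = 0$ is vacuous in both cases, since it forces the denominator in \eqref{eq.tautrue_trial} to be nonpositive and hence $\tautrialTrue_k = \infty$). The trade-off is the usual one: the paper's citation keeps the exposition short at the cost of sending the reader elsewhere, while your argument makes the dependence of $\tauTrue_{\min}$ on the problem constants $\{\kappa_g,\kappa_y,\kappa_J,\kappa_\sigma,\kappa_H,\zeta,\sigma,\epsilon_\tau,\bar\tau_{\min}\}$ fully explicit, which is arguably more in the spirit of Assumption~\ref{ass.main_accuracy}, where such constants are tracked explicitly.
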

\begin{proof}
By \cite[Lemma~2.16]{BeraCurtRobiZhou21}, $\{\tautrialTrue_k\}\subset\mathbb{R}_{>0} \cup \{+\infty\}$ 
is always bounded away from zero. We define $\tautrialTrue_{\min}\in\R{}_{>0}$ such that $\tautrialTrue_{\min} \leq \tautrialTrue_k$ for all $k\in\N{}$ in every realization 
of Algorithm~\ref{alg.sqp_line_search_practical}. \change{When event $\mathcal{E}$ occurs (see Assumption~\ref{ass.good_merit_paramter}), by \eqref{eq.tautrue_update}--\eqref{eq.tautrue_trial},} 
one may pick $\tau_{\min} = \min\{(1-\epsilon_{\tau})\tautrialTrue_{\min},\bar\tau_{\min}\}$ to conclude the proof.
\end{proof}
 
Let $E_k$ and $E_k^+$ be the errors in the objective function evaluations from Oracle~\ref{oracle.zero}, i.e., 
$E_k := \abs{\bar{f}(X_k; \Xi_k^0)-f(X_k)}$ and $E_k^+ := \abs{\bar{f}(X_k^+; \Xi_k^+)-f(X_k^+)}$, with realizations $e_k$ and $e_k^+$, respectively. Next, we introduce several definitions necessary for the analysis of Algorithm~\ref{alg.sqp_line_search_practical}. 
Specifically, we define \emph{\textbf{true/false}} iterations (Definition~\ref{def.true}), \emph{\textbf{successful/unsuccessful}} iterations (Definition~\ref{def.successful}) and \emph{\textbf{large/small steps}} (Definition~\ref{def.largestep}), and introduce three indicator variables respectively. 

\begin{definition}\label{def.true}
For any realization 
of Algorithm~\ref{alg.sqp_line_search_practical}, iteration $k\in\N{}$ is \textbf{true} if 
\begin{equation}\label{eq.true}
    \|\bar{g}_k - \nabla f_k\| \leq \max\left\{ \epsilon_g, \kappa_{\mathrm{FO}}\alpha_k\sqrt{\Delta l(x_k,\bar\tau_{k},\bar{g}_k,\bar{d}_k)} \right\} \text{ and } e_k + e_k^+ \leq 2\epsilon_f,
\end{equation}
where $\Delta l(x_k,\bar\tau_{k},\bar{g}_k,\bar{d}_k)$ is defined in  \eqref{eq.model_reduction} and the constants 
$\epsilon_f$, $\epsilon_g$ and $\kappa_{\mathrm{FO}}$ are the same as in Oracles~\ref{oracle.zero} and \ref{oracle.first}. If \eqref{eq.true} does not hold, we call the iteration a \textbf{false} iteration. We use the random indicator variable $I_k$ to denote if an iteration is true.
\end{definition}
	
\begin{definition}\label{def.successful} Given $\theta\in (0,1)$, for all $k\in\N{}$ in any realization 
of Algorithm~\ref{alg.sqp_line_search_practical}, let $\bar\phi(x_k,\bar\tau_{k};\xi_k)$ and $\bar\phi(x_k^+,\bar\tau_{k};\xi_k^+)$ be obtained by Oracle~\ref{oracle.zero}. 
If \eqref{eq.line_serach_cond_old_old} 
holds, then iteration $k$ is \textbf{successful}, otherwise, it is an \textbf{unsuccessful} iteration. We use the random indicator variable $\Theta_k$ to denote whether an iteration is successful.
\end{definition}
	
\begin{definition}\label{def.largestep}
For all $k\in\N{}$ in any realization 
of Algorithm~\ref{alg.sqp_line_search_practical}, if $\min\{ \alpha_k,\alpha_{k+1} \} \geq \tilde\alpha$ where $\tilde\alpha$ is some problem-dependent positive real number (defined in Lemma~\ref{algo_behave}),  then we call the step a \textbf{large step} and set the indicator variable $U_k = 1$.  Otherwise, we call the step $k$ a \textbf{small step} and set $U_k = 0$.
\end{definition}

We show that under appropriate conditions, if the step is a \emph{\textbf{small step}} and the iteration is \emph{\textbf{true}}, then, the iteration is guaranteed to be \emph{\textbf{successful}}  (see Lemma \ref{algo_behave}). 
The last definition is for the stopping time $(T_{\varepsilon_{\Delta l}})$ and a measure of progress $(\{Z_k\})$.

\begin{definition}\label{def.iter_term}
\sloppy For any realization 
of Algorithm~\ref{alg.sqp_line_search_practical}, define $T_{\varepsilon_{\Delta l}} = \min\{ k:\sqrt{\Delta {l(x_k,\tauTrue_k,\nabla f_k,
\dTrue_k)}} \leq \varepsilon_{\Delta l} \}$, the number of iterations required 
to reach a first-order $\varepsilon$-stationary iterate, where $\varepsilon = \Omega(\varepsilon_{\Delta l})$. 
We discuss the explicit relationship between $\varepsilon$ and $\varepsilon_{\Delta l}$ in Remark~\ref{rem.stationary_measure}. Moreover, for all $k\in\mathbb{N}$ in any realization 
of Algorithm~\ref{alg.sqp_line_search_practical} \change{for which event $\mathcal{E}$ occurs (see Assumption~\ref{ass.good_merit_paramter})}, let $Z_k := \phi(x_k,\bar\tau_{k}) - \phi_{\min} -(\bar\tau_{k}f_{\inf} - \bar\tau_{\min}f_{\inf})$, where $\phi_{\min}$ is a lower bound of $\phi(\cdot,\bar\tau_{\min})$ over $\Xcal$ and $\bar\tau_{\min}$ is the constant 
defined in Assumption~\ref{ass.good_merit_paramter}.
\end{definition}

\begin{remark}\label{rem.stationary_measure}

A key ingredient of our algorithm is the stopping time $T_{\varepsilon_{\Delta l}}$ that is related to $\Delta l(x_k,\tau_k,\nabla f_k,d_k)$. In fact, by \eqref{eq.stationarity_measurement}, \change{Assumption~\ref{ass.prob}, Assumption~\ref{ass.H}, Assumption~\ref{ass.good_merit_paramter}, Lemma~\ref{lem.deter_tau_lb}} and Lemma~\ref{lem.Deltal_lb_det} (see below), the stopping time $T_{\varepsilon_{\Delta l}}$ defined in Definition~\ref{def.iter_term} is the number of iterations needed to achieve a first-order $\varepsilon$-stationary iterate, i.e., 
\begin{equation}\label{eq.epsilon_complexity}
	\max\{\|\nabla f_k + J_k^Ty_k\|,\sqrt{\|c_k\|}\} \leq \varepsilon, \quad\text{where}\quad \varepsilon = \tfrac{\max\{\kappa_H,1\}}{\sqrt{\kappa_l\tau_{\min}}}\cdot \varepsilon_{\Delta l}
\end{equation}
\change{and $\{\kappa_H,\tau_{\min},\kappa_l\}\subset\R{}_{>0}$ are defined in Assumption~\ref{ass.H} and Lemmas~\ref{lem.deter_tau_lb} and~\ref{lem.Deltal_lb_det}.} 
Due to the existence of noise and bias in the zeroth- and first-order oracles (Oracles~\ref{oracle.zero} and~\ref{oracle.first}), $\varepsilon$ is to be bounded away from zero.
We note that \eqref{eq.epsilon_complexity} is the same stationarity measure as that used in \cite[Eq. (5)]{CurtONeiRobi21}, and is a non-standard first-order stationary measure compared to $\left\|\begin{bmatrix} \nabla f_k + J_k^Ty_k \\ c_k \end{bmatrix}\right\|$. That said, one can show that $\left\|\begin{bmatrix} \nabla f_k + J_k^Ty_k \\ c_k \end{bmatrix}\right\| \leq 2\max\{\|\nabla f_k + J_k^Ty_k\|,\|c_k\|\} \leq \tfrac{2\max\{\kappa_H,\kappa_J\}}{\sqrt{\kappa_l\tau_{\min}}}\varepsilon_{\Delta l} = \Omega(\varepsilon)$. Throughout this paper we focus on (and provide $\varepsilon$-complexity bounds for) \eqref{eq.epsilon_complexity} as it provides a stronger result for feasibility ($\|c_k\|$) when $\varepsilon < 1$. 
\end{remark}

\subsection{Main Technical Results}

We build toward the main result of the paper (Theorem~\ref{thm:subexp_noise}) through a sequence of technical lemmas. Our first lemma shows that $Z_k$ (defined in Definition~\ref{def.iter_term}) is always non-negative.
\begin{lemma}\label{lem_progress} For all $k\in\N{}$ in any realization 
of Algorithm~\ref{alg.sqp_line_search_practical} \change{for which event $\mathcal{E}$ occurs (see Assumption~\ref{ass.good_merit_paramter}),} 
$Z_k \geq 0$.
\end{lemma}
\begin{proof}
It follows from \eqref{eq.merit_function} and Definition~\ref{def.iter_term} that
\begin{equation*}
\begin{aligned}
    Z_k &= \phi(x_k,\bar\tau_{k}) - \phi_{\min} - (\bar\tau_{k}f_{\inf} - \bar\tau_{\min}f_{\inf})\\
    &= (\bar\tau_k(f_k-f_{\inf}) + \|c_k\|_1) - \phi_{\min} + \bar\tau_{\min}f_{\inf}\\
    &\geq (\bar\tau_{\min}(f_k-f_{\inf}) + \|c_k\|_1) - \phi_{\min} + \bar\tau_{\min}f_{\inf}
    = \phi(x_k,\bar\tau_{\min}) - \phi_{\min} \geq 0,
\end{aligned}
\end{equation*}
which concludes the proof.
\end{proof}
By Lemma~\ref{lem_progress}, \change{if event $\mathcal{E}$ occurs, 
then} $\{Z_k\}$ is always non-negative and $Z_k = 0$ if and only if $\bar\tau_k(f_k-f_{\inf}) = \bar\tau_{\min}(f_k-f_{\inf})$ and $\phi(x_k,\bar\tau_{\min}) = \phi_{\min}$. In fact, $\{Z_k\}$ uniformly bounded below by a constant suffices to prove our main theoretical results (we do not necessarily need $\{Z_k\}$ to converge to zero). A similar property of $\{Z_k\}$ also appears in the unconstrained setting; see e.g., \cite{jin2022high}.

The next lemma provides a useful lower bound for the reduction in the model of the merit function, $\Delta l(x_k,\bar\tau_k,\bar{g}_k,\bar{d}_k)$, that is related to the primal search direction ($\|\bar{d}_k\|^2$) and a measure of infeasibility ($\|c_k\|$).

\begin{lemma}\label{lem.Deltal_lb}
There exists some constant $\kappa_l\in\R{}_{>0}$ such that for all $k\in\N{}$ in any realization 
of Algorithm~\ref{alg.sqp_line_search_practical}, $\Delta l(x_k,\bar\tau_k,\bar{g}_k,\bar{d}_k) \geq \kappa_l\bar\tau_k(\|\bar{d}_k\|^2 + \|c_k\|_1)$.
\end{lemma}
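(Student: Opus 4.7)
The plan is to lower bound $\Delta l(x_k,\bar\tau_k,\bar g_k,\bar d_k)$ via the model reduction inequality \eqref{eq.model_reduction_cond} from Lemma~\ref{lem.model_reduction}, and separately upper bound $\bar\tau_k(\|\bar d_k\|^2+\|c_k\|)$ by a constant multiple of the right-hand side of \eqref{eq.model_reduction_cond}. The $\|c_k\|$ piece is almost immediate: since $\bar\tau_k\le\bar\tau_{-1}$ by the monotonicity established in Lemma~\ref{lem.model_reduction} and $\|c_k\|\le\|c_k\|_1$, one has $\bar\tau_k\|c_k\|\le(\bar\tau_{-1}/\sigma)\sigma\|c_k\|_1\le(\bar\tau_{-1}/\sigma)\Delta l(x_k,\bar\tau_k,\bar g_k,\bar d_k)$.

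The main work is handling $\bar\tau_k\|\bar d_k\|^2$. I would decompose $\bar d_k=\bar u_k+\bar v_k$ orthogonally into its null-space component $\bar u_k\in\operatorname{Null}(J_k)$ and range-space component $\bar v_k\in\operatorname{Range}(J_k^T)$. Because $J_k\bar d_k=-c_k$ from \eqref{eq.SQP}, the normal component satisfies $\bar v_k=-J_k^T(J_kJ_k^T)^{-1}c_k$, giving $\|\bar v_k\|\le\kappa_J\kappa_\sigma\|c_k\|$, and using $\|c_k\|\le\|c_k\|_1\le\kappa_c$ one obtains $\|\bar v_k\|^2\le\kappa_J^2\kappa_\sigma^2\kappa_c\|c_k\|_1$.

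For $\|\bar u_k\|^2$, I would expand $\bar d_k^T H_k\bar d_k=\bar u_k^T H_k\bar u_k+2\bar u_k^T H_k\bar v_k+\bar v_k^T H_k\bar v_k$, apply Assumption~\ref{ass.H} to get $\bar u_k^T H_k\bar u_k\ge\zeta\|\bar u_k\|^2$, and use Young's inequality to absorb the cross term $2\bar u_k^TH_k\bar v_k$ into $(\zeta/2)\|\bar u_k\|^2+(2\kappa_H^2/\zeta)\|\bar v_k\|^2$. This yields
\begin{equation*}
\tfrac{\zeta}{2}\|\bar u_k\|^2\le \bar d_k^T H_k\bar d_k+\left(\tfrac{2\kappa_H^2}{\zeta}+\kappa_H\right)\|\bar v_k\|^2\le \max\{\bar d_k^T H_k\bar d_k,0\}+C\|c_k\|_1,
\end{equation*}
for an explicit constant $C$ depending only on $\kappa_H,\zeta,\kappa_J,\kappa_\sigma,\kappa_c$; the inequality is valid both when $\bar d_k^T H_k\bar d_k\ge 0$ and (trivially, by dropping the negative left summand) when it is negative.

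Combining $\|\bar d_k\|^2=\|\bar u_k\|^2+\|\bar v_k\|^2$ with the two estimates above gives $\|\bar d_k\|^2\le C_1\max\{\bar d_k^T H_k\bar d_k,0\}+C_2\|c_k\|_1$. Multiplying by $\bar\tau_k$, bounding $\bar\tau_k\le\bar\tau_{-1}$ on the $\|c_k\|_1$ term, and invoking \eqref{eq.model_reduction_cond} to compare against $\bar\tau_k\max\{\bar d_k^T H_k\bar d_k,0\}+\sigma\|c_k\|_1\le\Delta l$ produces $\bar\tau_k\|\bar d_k\|^2\le C'\Delta l(x_k,\bar\tau_k,\bar g_k,\bar d_k)$. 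Adding this to the $\|c_k\|$ bound and choosing $\kappa_l$ as the reciprocal of the resulting constant finishes the proof. I expect the only delicate point to be keeping track of the two sign cases for $\bar d_k^TH_k\bar d_k$ when invoking Young's inequality so that the same linear-in-$\|c_k\|_1$ remainder appears in both.
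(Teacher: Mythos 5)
Your proof is correct. The paper itself does not carry out this argument: its entire proof is a citation of \cite[Lemma~3.4]{BeraCurtRobiZhou21} for the inequality $-\bar\tau_k(\bar g_k^T\bar d_k+\tfrac12\max\{\bar d_k^TH_k\bar d_k,0\})+\|c_k\|_1\ge\kappa_l\bar\tau_k(\|\bar d_k\|^2+\|c_k\|)$, followed by the observation that $\Delta l(x_k,\bar\tau_k,\bar g_k,\bar d_k)$ dominates that left-hand side because the dropped term $\tfrac12\bar\tau_k\max\{\bar d_k^TH_k\bar d_k,0\}$ is nonnegative. What you have written is in effect a self-contained reconstruction of the content of that external lemma: the orthogonal splitting of $\bar d_k$ into its component in the null space of $J_k$ and the normal component $-J_k^T(J_kJ_k^T)^{-1}c_k$, the curvature bound $\zeta\|\bar u_k\|^2\le\bar u_k^TH_k\bar u_k$ from Assumption~\ref{ass.H}, and Young's inequality to absorb the cross term are exactly the standard ingredients, and your handling of the two sign cases for $\bar d_k^TH_k\bar d_k$ is fine since replacing it by $\max\{\bar d_k^TH_k\bar d_k,0\}$ only weakens the inequality. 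The one structural difference is how the intermediate bound $\|\bar d_k\|^2\le C_1\max\{\bar d_k^TH_k\bar d_k,0\}+C_2\|c_k\|_1$ gets converted into a multiple of $\Delta l$: you invoke \eqref{eq.model_reduction_cond} together with the monotonicity $\bar\tau_k\le\bar\tau_{-1}$, which makes your $\kappa_l$ depend on $\bar\tau_{-1}/\sigma$ and on $\kappa_c$ (the latter entering through the step $\|\bar v_k\|^2\le\kappa_J^2\kappa_\sigma^2\kappa_c\|c_k\|_1$ that linearizes the quadratic normal-component term in $\|c_k\|_1$), whereas the cited lemma packages an analogous constant internally. Both routes are valid; yours buys transparency and an explicit constant at the cost of length, while the paper's buys brevity at the cost of outsourcing the real work to another reference.
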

\begin{proof}
For any iteration $k\in\N{}$, by \cite[Lemma~3.4]{BeraCurtRobiZhou21}, there exists some constant $\kappa_l\in\R{}_{>0}$ such that $-\bar\tau_k(\bar{g}_k^T\bar{d}_k + \tfrac{1}{2}\max\{\bar{d}_k^TH_k\bar{d}_k,0\}) + \|c_k\|_1 \geq \kappa_l\bar\tau_k(\|\bar{d}_k\|^2 + \|c_k\|_1)$. 
By $\bar\tau_k\in\R{}_{>0}$ (from Lemma~\ref{lem.model_reduction}), this implies $\Delta l(x_k,\bar\tau_k,\bar{g}_k,\bar{d}_k) = -\bar\tau_k\bar{g}_k^T\bar{d}_k + \|c_k\|_1 \geq -\bar\tau_k(\bar{g}_k^T\bar{d}_k + \tfrac{1}{2}\max\{\bar{d}_k^TH_k\bar{d}_k,0\}) + \|c_k\|_1$, which concludes the proof.
\end{proof}

\begin{lemma}\label{lem.Deltal_lb_det}
There exists some constant $\kappa_l\in\R{}_{>0}$ such that for all $k\in\N{}$ in any realization 
of Algorithm~\ref{alg.sqp_line_search_practical}, $\Delta l(x_k,\tau_k,g_k,d_k) \geq \kappa_l\tau_k(\|d_k\|^2 + \|c_k\|_1)$.
\end{lemma}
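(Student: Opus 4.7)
The plan is to mirror the proof of Lemma~\ref{lem.Deltal_lb} essentially verbatim, substituting the deterministic triple $(\nabla f_k,\dTrue_k,\tauTrue_k)$ for the stochastic triple $(\bar g_k,\bar d_k,\bar\tau_k)$. The key observation is that the workhorse used to establish Lemma~\ref{lem.Deltal_lb}, namely \cite[Lemma~3.4]{BeraCurtRobiZhou21}, is a purely structural result: it requires only that the primal-dual direction solves a Newton--KKT system of the block form \eqref{eq.SQP}, that $H_k$ satisfies Assumption~\ref{ass.H}, and that the merit parameter is produced by an update rule of the form \eqref{eq.tau_update}--\eqref{eq.tau_trial}. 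Since \eqref{eq.SQP_det} has the identical block structure as \eqref{eq.SQP}, and \eqref{eq.tautrue_update}--\eqref{eq.tautrue_trial} mirror \eqref{eq.tau_update}--\eqref{eq.tau_trial}, every hypothesis transfers immediately to the deterministic setting.

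First, I would invoke \cite[Lemma~3.4]{BeraCurtRobiZhou21} applied to $(\nabla f_k,\dTrue_k,\yTrue_k,\tauTrue_k)$ to produce, with the same $\kappa_l\in\R{}_{>0}$ as in Lemma~\ref{lem.Deltal_lb},
\begin{equation*}
-\tauTrue_k\!\left(\nabla f_k^T\dTrue_k + \tfrac12\max\{\dTrue_k^TH_k\dTrue_k,0\}\right) + \|c_k\|_1 \;\geq\; \kappa_l\,\tauTrue_k\!\left(\|\dTrue_k\|^2 + \|c_k\|_1\right).
\end{equation*}
Next, I would verify that $\tauTrue_k > 0$; this follows by the same reasoning used at the end of Lemma~\ref{lem.model_reduction}. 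Namely, \eqref{eq.tautrue_trial} gives $\tautrialTrue_k > 0$ (either $\tautrialTrue_k=\infty$, or its numerator $(1-\sigma)\|c_k\|_1$ is strictly positive while its denominator is positive), and combined with $\bar\tau_k > 0$ via \eqref{eq.tautrue_update}, one obtains $\tauTrue_k > 0$.

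Finally, since $\tauTrue_k > 0$ and $\max\{\dTrue_k^TH_k\dTrue_k,0\}\geq 0$, the $\max$ term on the left-hand side above may be dropped, yielding
\begin{equation*}
\Delta l(x_k,\tauTrue_k,\nabla f_k,\dTrue_k) = -\tauTrue_k\nabla f_k^T\dTrue_k + \|c_k\|_1 \;\geq\; \kappa_l\,\tauTrue_k\!\left(\|\dTrue_k\|^2 + \|c_k\|_1\right) \;\geq\; \kappa_l\,\tauTrue_k\!\left(\|\dTrue_k\|^2 + \|c_k\|\right),
\end{equation*}
where the last inequality uses $\|c_k\|_1 \geq \|c_k\|$. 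There is no genuine obstacle here; the whole argument is a mechanical transcription of Lemma~\ref{lem.Deltal_lb} to deterministic quantities, and the only non-routine ingredient is the positivity of $\tauTrue_k$, which is settled by reusing the reasoning from Lemma~\ref{lem.model_reduction}.
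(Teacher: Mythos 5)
Your proposal is correct and follows essentially the same route as the paper, which simply observes that the argument of Lemma~\ref{lem.Deltal_lb} carries over verbatim with $(\bar g_k,\bar d_k,\bar\tau_k)$ replaced by $(\nabla f_k,\dTrue_k,\tauTrue_k)$ and invokes \cite[Lemma~3.4]{BeraCurtRobiZhou21} for the same constant $\kappa_l$. Your additional verification that $\tauTrue_k>0$ (via \eqref{eq.tautrue_update}--\eqref{eq.tautrue_trial} and the positivity of $\bar\tau_k$ from Lemma~\ref{lem.model_reduction}) is a detail the paper leaves implicit, and it is handled correctly.
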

\begin{proof} The proof follows the same logic as that of Lemma~\ref{lem.Deltal_lb} with the stochastic quantities replaced by their deterministic counterparts. By \cite[Lemma~3.4]{BeraCurtRobiZhou21}, the desired inequality is satisfied for the same constant $\kappa_l$ defined in Lemma~\ref{lem.Deltal_lb}.
\end{proof}

The next lemma bounds the errors in the  stochastic search directions and dual variables, respectively, with respect to the errors in the gradient approximations.

\begin{lemma}\label{lem.step_diff_bound}
For all $k\in\N{}$ in any realization 
of Algorithm~\ref{alg.sqp_line_search_practical}, there exist constants $\{\zeta, \zeta_y\}\subset\R{}_{>0}$ such that $\|\bar{d}_k - \dTrue_k\| \leq \zeta^{-1}\|\bar{g}_k - \nabla f_k\|$ and $\|\bar{y}_k - \yTrue_k\| \leq \zeta_y\|\bar{g}_k - \nabla f_k\|$, where $\zeta$ is defined in Assumption~\ref{ass.H}. 
\end{lemma}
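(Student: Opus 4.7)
The plan is to subtract the deterministic KKT system \eqref{eq.SQP_det} from the stochastic KKT system \eqref{eq.SQP}. This yields
\begin{equation*}
    \begin{bmatrix} H_k & J_k^T \\ J_k & 0 \end{bmatrix}\begin{bmatrix} \bar{d}_k - \dTrue_k \\ \bar{y}_k - \yTrue_k \end{bmatrix} = - \begin{bmatrix} \bar{g}_k - \nabla f_k \\ 0 \end{bmatrix},
\end{equation*}
so the error tuple $(\bar{d}_k - \dTrue_k,\bar{y}_k - \yTrue_k)$ satisfies a linear system with the same saddle-point matrix, driven only by the gradient discrepancy $\bar{g}_k - \nabla f_k$. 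The zero in the lower block of the right-hand side is the crucial feature because it forces $J_k(\bar{d}_k - \dTrue_k) = 0$, i.e., the primal error lies in $\Null(J_k)$.

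Next I would exploit the positive-definiteness of $H_k$ on $\Null(J_k)$ (Assumption~\ref{ass.H}). Taking the inner product of the first block equation $H_k(\bar{d}_k - \dTrue_k) + J_k^T(\bar{y}_k - \yTrue_k) = -(\bar{g}_k - \nabla f_k)$ with $\bar{d}_k - \dTrue_k$ kills the dual term since $J_k(\bar{d}_k - \dTrue_k) = 0$, leaving
\begin{equation*}
    \zeta\|\bar{d}_k - \dTrue_k\|^2 \leq (\bar{d}_k - \dTrue_k)^T H_k (\bar{d}_k - \dTrue_k) = -(\bar{g}_k - \nabla f_k)^T(\bar{d}_k - \dTrue_k) \leq \|\bar{g}_k - \nabla f_k\|\|\bar{d}_k - \dTrue_k\|,
\end{equation*}
which, after dividing by $\zeta\|\bar{d}_k - \dTrue_k\|$ (and handling the trivial case separately), gives the first bound with constant $\zeta^{-1}$.

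For the dual bound, I would go back to the first block equation, rearrange to $J_k^T(\bar{y}_k - \yTrue_k) = -(\bar{g}_k - \nabla f_k) - H_k(\bar{d}_k - \dTrue_k)$, multiply on the left by $J_k$, and invert the (invertible, by Assumption~\ref{ass.prob}) matrix $J_kJ_k^T$ to obtain
\begin{equation*}
    \bar{y}_k - \yTrue_k = -(J_kJ_k^T)^{-1}\bigl[J_k(\bar{g}_k - \nabla f_k) + J_k H_k(\bar{d}_k - \dTrue_k)\bigr].
\end{equation*}
Taking norms and applying the bounds $\|(J_kJ_k^T)^{-1}\|\leq \kappa_\sigma$, $\|J_k\|\leq \kappa_J$, $\|H_k\|\leq \kappa_H$, together with the already-established primal bound, yields $\|\bar{y}_k - \yTrue_k\| \leq \kappa_\sigma\kappa_J(1 + \kappa_H\zeta^{-1})\|\bar{g}_k - \nabla f_k\|$, so the second constant can be taken as $\omega_1 := \kappa_\sigma\kappa_J(1 + \kappa_H\zeta^{-1})$.

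The argument is essentially a sensitivity analysis for a saddle-point system, and no step is particularly delicate. The only mild subtlety is remembering that the restricted positive-definiteness of $H_k$ is what makes the primal estimate work, since $H_k$ itself need not be positive definite on all of $\R{n}$; once one observes that $\bar{d}_k-\dTrue_k\in\Null(J_k)$, everything else is a straightforward application of Cauchy–Schwarz and the bounds from Assumptions~\ref{ass.prob} and~\ref{ass.H}.
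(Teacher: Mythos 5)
Your proposal is correct and follows essentially the same route as the paper's proof: both establish $J_k(\bar d_k-\dTrue_k)=0$ from the subtracted KKT systems, use the restricted positive-definiteness of $H_k$ on $\Null(J_k)$ together with Cauchy--Schwarz to get the primal bound with constant $\zeta^{-1}$, and then solve for $\bar y_k-\yTrue_k$ via $(J_kJ_k^T)^{-1}J_k$ to obtain $\omega_1=\kappa_\sigma\kappa_J(1+\kappa_H\zeta^{-1})$. No gaps.
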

\begin{proof}
By the Cauchy–Schwarz inequality, Assumption~\ref{ass.H}, \eqref{eq.SQP_det}, and the fact that $(\bar{d}_k - \dTrue_k) \in\Null(J_k)$, it follows that
\begin{equation}\label{eq.new}
\begin{aligned}
    \|\bar{d}_k - \dTrue_k\|\|\bar{g}_k - \nabla f_k\| &\geq (\bar{d}_k - \dTrue_k)^T(\nabla f_k - \bar{g}_k) \\
    &= (\bar{d}_k - \dTrue_k)^T(H_k(\bar{d}_k - \dTrue_k) + J_k^T(\bar{y}_k - \yTrue_k)) \\
    &= (\bar{d}_k - \dTrue_k)^TH_k(\bar{d}_k - \dTrue_k) \geq \zeta\|\bar{d}_k - \dTrue_k\|^2,
\end{aligned}
\end{equation}
and 
$\|\bar{d}_k - \dTrue_k\| \leq \zeta^{-1}\|\bar{g}_k - \nabla f_k\|$. 
By the triangle and  Cauchy–Schwarz inequalities, Assumptions~\ref{ass.prob} and \ref{ass.H}, and \eqref{eq.SQP_det} and \eqref{eq.new}, 
it follows that
\begin{equation*}
\begin{aligned}
\|\bar{y}_k - \yTrue_k\| &= \|(J_kJ_k^T)^{-1}J_k\left((\bar{g}_k - \nabla f_k) + H_k(\bar{d}_k - \dTrue_k)\right)\| \\
&\leq \|(J_kJ_k^T)^{-1}\|\|J_k\|(\|\bar{g}_k - \nabla f_k\| + \|H_k\|\|\bar{d}_k - \dTrue_k\|) \\
&\leq \kappa_{\sigma}\kappa_J(1+\kappa_H\zeta^{-1})\|\bar{g}_k - \nabla f_k\|.
\end{aligned}
\end{equation*}
Setting $\zeta_y = \kappa_{\sigma}\kappa_J(1+\kappa_H\zeta^{-1})$ completes the proof. 
\end{proof}

The next lemma relates the inner product of the stochastic gradient and stochastic search direction to the stochastic reduction in the model of the merit function. We consider two cases that are related to the two cases in the max term of Oracle~\ref{oracle.first}.

\begin{lemma}\label{lem.|gd|_2}
For all $k\in\N{}$ in any realization 
of Algorithm~\ref{alg.sqp_line_search_practical}:
\begin{itemize}
	\item If $\|\bar{g}_k - \nabla f_k\| \leq \kappa_{\mathrm{FO}}\alpha_k\sqrt{\Delta l(x_k,\bar\tau_{k},\bar{g}_k,\bar{d}_k)}$, then
	\begin{align*}
	    \bar\tau_k|\bar{g}_k^T\bar{d}_k| \leq \left(\tfrac{\max\{\kappa_H,\kappa_y\}}{\kappa_l} + \tfrac{\sqrt{\bar\tau_k}\left(1 + \kappa_H\zeta^{-1}\right)\kappa_{\mathrm{FO}}\alpha_k}{\sqrt{\kappa_l}}\right)\Delta l(x_k,\bar\tau_k,\bar{g}_k,\bar{d}_k).
	\end{align*}
	\item If $\|\bar{g}_k - \nabla f_k\| \leq \epsilon_g$, 
	\begin{align*}
	    \bar\tau_k|\bar{g}_k^T\bar{d}_k| \leq \tfrac{\max\{\kappa_H,\kappa_y\} + 1}{\kappa_l} \Delta l(x_k,\bar\tau_k,\bar{g}_k,\bar{d}_k) + \tfrac{\bar\tau_k\left(1 + \kappa_H\zeta^{-1}\right)^2}{4}\epsilon_g^2.
	\end{align*}
\end{itemize}
\end{lemma}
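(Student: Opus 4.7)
}

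The main idea is to exploit the structure of the \emph{deterministic} SQP system \eqref{eq.SQP_det} to represent $\nabla f_k^T\bar{d}_k$ in a way that introduces the deterministic quantities $\dTrue_k,\yTrue_k$ whose norms are controlled (by Lemma~\ref{lem.dy_bound}), and then to treat the discrepancy $\bar{g}_k-\nabla f_k$ as the sole source of stochastic error. Concretely, writing $\bar{d}_k=\dTrue_k+u$ with $u=\bar{d}_k-\dTrue_k\in\mathrm{Null}(J_k)$ (since $J_k\bar{d}_k=-c_k=J_k\dTrue_k$), and using $\nabla f_k=-H_k\dTrue_k-J_k^T\yTrue_k$ together with $J_ku=0$, a short calculation gives the identity
\begin{equation*}
    \nabla f_k^T\bar{d}_k \;=\; -\dTrue_k^T H_k\bar{d}_k + \yTrue_k^T c_k.
\end{equation*}
Combining this with the trivial decomposition $\bar{g}_k^T\bar{d}_k=\nabla f_k^T\bar{d}_k+(\bar{g}_k-\nabla f_k)^T\bar{d}_k$ and applying triangle and Cauchy--Schwarz inequalities together with $\|H_k\|\le\kappa_H$, $\|\yTrue_k\|\le\kappa_y$ (Lemma~\ref{lem.dy_bound}), yields
\begin{equation*}
    |\bar{g}_k^T\bar{d}_k| \;\le\; \kappa_H\|\dTrue_k\|\|\bar{d}_k\| + \kappa_y\|c_k\|_1 + \|\bar{g}_k-\nabla f_k\|\|\bar{d}_k\|.
\end{equation*}

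Next I would substitute $\|\dTrue_k\|\le\|\bar{d}_k\|+\zeta^{-1}\|\bar{g}_k-\nabla f_k\|$ from Lemma~\ref{lem.step_diff_bound} to obtain
\begin{equation*}
    |\bar{g}_k^T\bar{d}_k| \;\le\; \kappa_H\|\bar{d}_k\|^2 + \kappa_y\|c_k\|_1 + (1+\kappa_H\zeta^{-1})\|\bar{g}_k-\nabla f_k\|\|\bar{d}_k\|,
\end{equation*}
which already exposes the constant $(1+\kappa_H\zeta^{-1})$ from the statement. Multiplying through by $\bar\tau_k$, grouping the first two terms as $\max\{\kappa_H,\kappa_y\}\,\bar\tau_k(\|\bar{d}_k\|^2+\|c_k\|_1)$, and invoking Lemma~\ref{lem.Deltal_lb} give
\begin{equation*}
    \bar\tau_k|\bar{g}_k^T\bar{d}_k| \;\le\; \tfrac{\max\{\kappa_H,\kappa_y\}}{\kappa_l}\Delta l(x_k,\bar\tau_k,\bar{g}_k,\bar{d}_k) + (1+\kappa_H\zeta^{-1})\,\bar\tau_k\|\bar{g}_k-\nabla f_k\|\,\|\bar{d}_k\|.
\end{equation*}
This is the common starting point for both cases; what remains is to absorb the residual term in each case.

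For Case 1, I would use the oracle hypothesis $\|\bar g_k-\nabla f_k\|\le\kappa_{\mathrm{FO}}\alpha_k\sqrt{\Delta l}$ and the identity $\bar\tau_k\|\bar{d}_k\|=\sqrt{\bar\tau_k}\cdot\sqrt{\bar\tau_k}\|\bar{d}_k\|$ combined with $\sqrt{\bar\tau_k}\|\bar{d}_k\|\le\sqrt{\Delta l/\kappa_l}$ (a direct consequence of Lemma~\ref{lem.Deltal_lb}) to get
\begin{equation*}
    \bar\tau_k\|\bar g_k-\nabla f_k\|\|\bar{d}_k\| \;\le\; \kappa_{\mathrm{FO}}\alpha_k\sqrt{\Delta l}\cdot\sqrt{\bar\tau_k}\cdot\sqrt{\Delta l/\kappa_l} \;=\; \tfrac{\sqrt{\bar\tau_k}\,\kappa_{\mathrm{FO}}\alpha_k}{\sqrt{\kappa_l}}\,\Delta l,
\end{equation*}
which exactly produces the second additive term in the statement. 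For Case 2, with $\|\bar g_k-\nabla f_k\|\le\epsilon_g$, I would instead apply Young's inequality $ab\le a^2+b^2/4$ with $a=\sqrt{\bar\tau_k}\|\bar{d}_k\|$ and $b=(1+\kappa_H\zeta^{-1})\epsilon_g\sqrt{\bar\tau_k}$, yielding
\begin{equation*}
    (1+\kappa_H\zeta^{-1})\bar\tau_k\epsilon_g\|\bar{d}_k\|
    \;\le\; \bar\tau_k\|\bar{d}_k\|^2 + \tfrac{(1+\kappa_H\zeta^{-1})^2\bar\tau_k\epsilon_g^2}{4}
    \;\le\; \tfrac{1}{\kappa_l}\Delta l + \tfrac{\bar\tau_k(1+\kappa_H\zeta^{-1})^2}{4}\epsilon_g^2,
\end{equation*}
where the last bound again uses Lemma~\ref{lem.Deltal_lb}. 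Adding this back gives the Case 2 bound with the constant $\frac{\max\{\kappa_H,\kappa_y\}+1}{\kappa_l}$ on $\Delta l$.

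The only nontrivial step is recognizing that the right decomposition to use is the one anchored at $\dTrue_k$ (not $\bar{d}_k$) inside the $H_k$ term, which is what generates the crucial constant $(1+\kappa_H\zeta^{-1})$ after applying Lemma~\ref{lem.step_diff_bound}; the ``naive'' decomposition via the stochastic SQP identity $\bar{g}_k^T\bar{d}_k=-\bar{d}_k^TH_k\bar{d}_k+\bar{y}_k^Tc_k$ appears to give a looser $\Delta l^{3/2}$ term in Case 1. Once the correct decomposition is identified, all remaining steps are routine applications of Cauchy--Schwarz, Young's inequality, and Lemmas~\ref{lem.dy_bound}, \ref{lem.step_diff_bound}, and \ref{lem.Deltal_lb}.
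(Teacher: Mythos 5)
Your proof is correct and follows essentially the same route as the paper: the identity you derive from the deterministic system, $\bar{g}_k^T\bar{d}_k = -\dTrue_k^TH_k\bar{d}_k + \yTrue_k^Tc_k + (\bar{g}_k-\nabla f_k)^T\bar{d}_k$, is algebraically the same decomposition the paper obtains by writing $\bar{g}_k = -(H_k\bar{d}_k + J_k^T\yTrue_k + J_k^T(\bar{y}_k-\yTrue_k))$ and eliminating $J_k^T(\bar{y}_k-\yTrue_k)$ via the difference of the two SQP systems, and both arrive at the same intermediate bound $\kappa_H\|\bar{d}_k\|^2 + \kappa_y\|c_k\|_1 + (1+\kappa_H\zeta^{-1})\|\bar{g}_k-\nabla f_k\|\|\bar{d}_k\|$. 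From there your use of Lemma~\ref{lem.Deltal_lb}, the oracle bound in Case~1, and Young's inequality in Case~2 matches the paper's argument step for step.
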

\begin{proof} 
If $\|\bar{g}_k - \nabla f_k\| \leq \kappa_{\mathrm{FO}}\alpha_k\sqrt{\Delta l(x_k,\bar\tau_{k},\bar{g}_k,\bar{d}_k)}$, by the triangle inequality, \eqref{eq.SQP}, Assumption~\ref{ass.H}, and Lemmas~\ref{lem.dy_bound},~\ref{lem.Deltal_lb} and~\ref{lem.step_diff_bound}, it follows that
\begin{align*}
    \bar\tau_k|\bar{g}_k^T\bar{d}_k|=\ & \bar\tau_k|(H_k\bar{d}_k + J_k^T\yTrue_k + J_k^T(\bar{y}_k - \yTrue_k))^T\bar{d}_k| \\
    \leq \ &\bar\tau_k(|\bar{d}_k^TH_k\bar{d}_k| + |\yTrue_k^TJ_k\bar{d}_k| + |(\bar{y}_k - \yTrue_k)^TJ_k\bar{d}_k|) \\
    \leq \ &\bar\tau_k(\kappa_H\|\bar{d}_k\|^2 + \|\yTrue_k\|\|c_k\| + \|(\bar{g}_k - \nabla f_k) + H_k(\bar{d}_k - \dTrue_k)\|\|\bar{d}_k\|) \\
    \leq \ &\max\{\kappa_H,\kappa_y\} \bar\tau_k(\|\bar{d}_k\|^2 + \|c_k\|) + \bar\tau_k(\|\bar{g}_k - \nabla f_k\| + \kappa_H\|\bar{d}_k - \dTrue_k\|)\|\bar{d}_k\| \\
    \leq \ &\tfrac{\max\{\kappa_H,\kappa_y\}}{\kappa_l}\Delta l(x_k,\bar\tau_k,\bar{g}_k,\bar{d}_k) + \bar\tau_k\left(1 + \kappa_H\zeta^{-1}\right)\|\bar{g}_k - \nabla f_k\|\|\bar{d}_k\| \\
    \leq \ &\tfrac{\max\{\kappa_H,\kappa_y\}}{\kappa_l}\Delta l(x_k,\bar\tau_k,\bar{g}_k,\bar{d}_k) + \tfrac{\sqrt{\bar\tau_k}\left(1 + \kappa_H\zeta^{-1}\right)\kappa_{\mathrm{FO}}\alpha_k}{\sqrt{\kappa_l}}\Delta l(x_k,\bar\tau_k,\bar{g}_k,\bar{d}_k), 
\end{align*}
which completes the first part of the proof.

Using similar logic, if $\|\bar{g}_k - \nabla f_k\| \leq \epsilon_g$, by the triangle inequality, \eqref{eq.SQP}, 
Assumption~\ref{ass.H}, \change{Lemmas~\ref{lem.dy_bound},~\ref{lem.Deltal_lb},~\ref{lem.step_diff_bound}},  and the fact that $ab\leq a^2+\tfrac{b^2}{4}$ holds for any $\{a,b\}\subset\mathbb{R}$,  it follows that
\begin{align*}
   \bar\tau_k|\bar{g}_k^T\bar{d}_k| \leq \ &\tfrac{\max\{\kappa_H,\kappa_y\}}{\kappa_l}\Delta l(x_k,\bar\tau_k,\bar{g}_k,\bar{d}_k) + \bar\tau_k\left(1 + \kappa_H\zeta^{-1}\right)\|\bar{g}_k - \nabla f_k\|\|\bar{d}_k\| \\
    \leq \ &\tfrac{\max\{\kappa_H,\kappa_y\}}{\kappa_l}\Delta l(x_k,\bar\tau_k,\bar{g}_k,\bar{d}_k) + \tfrac{\sqrt{\bar\tau_k}\left(1 + \kappa_H\zeta^{-1}\right)}{\sqrt{\kappa_l}}\epsilon_g\sqrt{\Delta l(x_k,\bar\tau_k,\bar{g}_k,\bar{d}_k)} \\
    \leq \ &\tfrac{\max\{\kappa_H,\kappa_y\} + 1}{\kappa_l} \Delta l(x_k,\bar\tau_k,\bar{g}_k,\bar{d}_k) + \tfrac{\bar\tau_k\left(1 + \kappa_H\zeta^{-1}\right)^2}{4}\epsilon_g^2,
\end{align*}
which completes the proof.
\end{proof}

The next lemma provides a useful upper bounds for the errors related to the stochastic search directions (and gradients) for the same two cases as in Lemma~\ref{lem.|gd|_2}.

\begin{lemma}\label{lem.gd_dHd_diff_case_a_2}
For all $k\in\N{}$ in any realization 
of Algorithm~\ref{alg.sqp_line_search_practical}:
\begin{itemize}
\item If $\|\bar{g}_k - \nabla f_k\| \leq \kappa_{\mathrm{FO}}\alpha_k\sqrt{\Delta l(x_k,\bar\tau_{k},\bar{g}_k,\bar{d}_k)}$, then
\begin{equation*}
\begin{aligned}
    |\nabla f_k^T\dTrue_k - \bar{g}_k^T\bar{d}_k| \leq \ &\left(\tfrac{(1+\kappa_H\zeta^{-1})\kappa_{\mathrm{FO}}\alpha_k}{\sqrt{\kappa_l\bar\tau_k}} + \tfrac{\kappa_{\mathrm{FO}}^2\alpha_k^2}{\zeta}\right)\Delta l(x_k,\bar\tau_k,\bar{g}_k,\bar{d}_k) \\ 
    \text{and}\ \ |\dTrue_k^TH_k\dTrue_k - \bar{d}_k^TH_k\bar{d}_k| \leq \ &\left(\tfrac{2\kappa_H\zeta^{-1}\kappa_{\mathrm{FO}}\alpha_k}{\sqrt{\kappa_l\bar\tau_k}} + \tfrac{\kappa_H\kappa_{\mathrm{FO}}^2\alpha_k^2}{\zeta^2}\right)\Delta l(x_k,\bar\tau_k,\bar{g}_k,\bar{d}_k).
\end{aligned}
\end{equation*}
\item If $\|\bar{g}_k - \nabla f_k\| \leq \epsilon_g$, then
\begin{equation*}
\begin{aligned}
|\nabla f_k^T\dTrue_k - \bar{g}_k^T\bar{d}_k| \leq \ & \tfrac{(1+\kappa_H\zeta^{-1})\epsilon_g}{\sqrt{\kappa_l\tauTrue_k}}\sqrt{\Delta l(x_k,\tauTrue_k,\nabla f_k,\dTrue_k)} + \zeta^{-1}\epsilon_g^2 \\ 
\text{and}\ \ |\dTrue_k^TH_k\dTrue_k - \bar{d}_k^TH_k\bar{d}_k| \leq \ & \tfrac{2\kappa_H\zeta^{-1}\epsilon_g}{\sqrt{\kappa_l\tauTrue_k}}\sqrt{\Delta l(x_k,\tauTrue_k,\nabla f_k,\dTrue_k)} + \kappa_H\zeta^{-2}\epsilon_g^2 .
\end{aligned}
\end{equation*}
\end{itemize}
\end{lemma}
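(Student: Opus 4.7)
The plan is to establish all four inequalities in parallel, differing only in which of $\|\bar d_k\|$ or $\|\dTrue_k\|$ one keeps at the end. For the gradient inner product bound in Case~1, I will start from the decomposition
\begin{equation*}
\nabla f_k^T \dTrue_k - \bar g_k^T \bar d_k = (\nabla f_k - \bar g_k)^T \dTrue_k + \bar g_k^T(\dTrue_k - \bar d_k),
\end{equation*}
and exploit the SQP system \eqref{eq.SQP} together with $(\dTrue_k - \bar d_k) \in \Null(J_k)$ to rewrite $\bar g_k^T(\dTrue_k - \bar d_k) = -\bar d_k^T H_k(\dTrue_k - \bar d_k)$, which eliminates the dual contributions. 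For the Hessian quadratic form, I will use the telescoping identity
\begin{equation*}
\dTrue_k^T H_k \dTrue_k - \bar d_k^T H_k \bar d_k = (\dTrue_k - \bar d_k)^T H_k \dTrue_k + \bar d_k^T H_k(\dTrue_k - \bar d_k),
\end{equation*}
which yields $|\dTrue_k^T H_k \dTrue_k - \bar d_k^T H_k \bar d_k| \leq \kappa_H\|\dTrue_k - \bar d_k\|(\|\dTrue_k\| + \|\bar d_k\|)$ via Cauchy--Schwarz and Assumption~\ref{ass.H}.

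Next, I will invoke Lemma~\ref{lem.step_diff_bound} to replace $\|\dTrue_k - \bar d_k\|$ by $\zeta^{-1}\|\bar g_k - \nabla f_k\|$ throughout, and then apply the triangle inequality in the form $\|\dTrue_k\| \leq \|\bar d_k\| + \zeta^{-1}\|\bar g_k - \nabla f_k\|$ so that both bounds collapse to the shape $A\|\bar g_k - \nabla f_k\|\|\bar d_k\| + B\|\bar g_k - \nabla f_k\|^2$. To close Case~1, I will apply Lemma~\ref{lem.Deltal_lb} to obtain $\|\bar d_k\| \leq \sqrt{\Delta l(x_k,\bar\tau_k,\bar g_k,\bar d_k)/(\kappa_l\bar\tau_k)}$, and then use the hypothesis $\|\bar g_k - \nabla f_k\| \leq \kappa_{\mathrm{FO}}\alpha_k\sqrt{\Delta l(x_k,\bar\tau_k,\bar g_k,\bar d_k)}$ to convert every term into a multiple of $\Delta l(x_k,\bar\tau_k,\bar g_k,\bar d_k)$, at which point the advertised constants drop out.

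For Case~2, the bound is in terms of the deterministic reduction $\Delta l(x_k,\tauTrue_k,\nabla f_k,\dTrue_k)$, so I will switch to the mirror decomposition
\begin{equation*}
\nabla f_k^T \dTrue_k - \bar g_k^T \bar d_k = \nabla f_k^T(\dTrue_k - \bar d_k) + (\nabla f_k - \bar g_k)^T \bar d_k,
\end{equation*}
where now $\nabla f_k^T(\dTrue_k - \bar d_k) = \dTrue_k^T H_k(\bar d_k - \dTrue_k)$ by \eqref{eq.SQP_det} and $(\dTrue_k - \bar d_k) \in \Null(J_k)$. The analogous manipulation then leaves $\|\dTrue_k\|$ on the leading term, and a symmetric triangle-inequality switch $\|\bar d_k\| \leq \|\dTrue_k\| + \zeta^{-1}\|\bar g_k - \nabla f_k\|$ handles the Hessian telescoping expression. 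Plugging in $\|\bar g_k - \nabla f_k\| \leq \epsilon_g$ and using Lemma~\ref{lem.Deltal_lb_det} to bound $\|\dTrue_k\| \leq \sqrt{\Delta l(x_k,\tauTrue_k,\nabla f_k,\dTrue_k)/(\kappa_l\tauTrue_k)}$ produces the desired Case~2 inequalities.

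The main subtlety, and the most error-prone step, will be matching the advertised constants precisely: the residual $\|\bar g_k - \nabla f_k\|^2$ term in the gradient inner product bound must carry the coefficient $\zeta^{-1}$ rather than $\kappa_H\zeta^{-2}$, while in the Hessian bound the residual coefficient is $\kappa_H\zeta^{-2}$. This forces the correct choice of decomposition per inequality, namely routing the $\|\bar g_k-\nabla f_k\|^2$ contribution through the $\|(\nabla f_k - \bar g_k)\|\cdot\zeta^{-1}\|(\nabla f_k-\bar g_k)\|$ branch for the inner-product estimate, but through $\kappa_H\|\dTrue_k - \bar d_k\|^2$ for the Hessian estimate. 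Beyond that careful bookkeeping, the derivation is routine application of the triangle inequality, Cauchy--Schwarz, and the previously established lemmas.
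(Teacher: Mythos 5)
Your proposal is correct and follows essentially the same route as the paper: the paper's three-term splittings (e.g., $(\bar{g}_k - \nabla f_k)^T\bar{d}_k + (\nabla f_k - \bar{g}_k)^T(\bar{d}_k - \dTrue_k) + \bar{g}_k^T(\bar{d}_k - \dTrue_k)$) are algebraically the same as your two-term decompositions followed by the triangle-inequality switch between $\|\dTrue_k\|$ and $\|\bar{d}_k\|$, and both arguments eliminate the dual terms via the SQP systems and $(\bar{d}_k - \dTrue_k)\in\Null(J_k)$, then close with Lemmas~\ref{lem.step_diff_bound}, \ref{lem.Deltal_lb} and \ref{lem.Deltal_lb_det}. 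The intermediate bounds and final constants you obtain match the paper's exactly.
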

\begin{proof}
We begin with $\|\bar{g}_k - \nabla f_k\| \leq \kappa_{\mathrm{FO}}\alpha_k\sqrt{\Delta l(x_k,\bar\tau_{k},\bar{g}_k,\bar{d}_k)}$. By the triangle and Cauchy–Schwarz inequalities, Assumption~\ref{ass.prob}, and Lemmas~\ref{lem.dy_bound},~\ref{lem.Deltal_lb} and~\ref{lem.step_diff_bound}, 
\begin{align*}
    &|\nabla f_k^T\dTrue_k - \bar{g}_k^T\bar{d}_k| \\
    = \ &|(\bar{g}_k - \nabla f_k)^T\bar{d}_k + (\nabla f_k - \bar{g}_k)^T(\bar{d}_k - \dTrue_k) + \bar{g}_k^T(\bar{d}_k - \dTrue_k)| \\
    = \ &|(\bar{g}_k - \nabla f_k)^T\bar{d}_k + (\nabla f_k - \bar{g}_k)^T(\bar{d}_k - \dTrue_k) - (H_k\bar{d}_k + J_k^T\bar{y}_k)^T(\bar{d}_k - \dTrue_k)| \\
    \leq \ &\|\bar{g}_k - \nabla f_k\|\|\bar{d}_k\| + \|\nabla f_k - \bar{g}_k\|\|\bar{d}_k - \dTrue_k\| + \kappa_H\|\bar{d}_k\|\|\bar{d}_k - \dTrue_k\| \\
    \leq \ &\left(\tfrac{(1+\kappa_H\zeta^{-1})\kappa_{\mathrm{FO}}\alpha_k}{\sqrt{\kappa_l\bar\tau_k}} + \zeta^{-1}\kappa_{\mathrm{FO}}^2\alpha_k^2\right)\Delta l(x_k,\bar\tau_k,\bar{g}_k,\bar{d}_k).
\end{align*}
Additionally, under Assumption~\ref{ass.H} it follows that
\begin{align*}
    |\dTrue_k^TH_k\dTrue_k - \bar{d}_k^TH_k\bar{d}_k| = \ &|2\bar{d}_k^TH_k(\bar{d}_k - \dTrue_k) - (\bar{d}_k - \dTrue_k)^TH_k(\bar{d}_k - \dTrue_k)| \\
    \leq \ &2|\bar{d}_k^TH_k(\bar{d}_k - \dTrue_k)| + |(\bar{d}_k - \dTrue_k)^TH_k(\bar{d}_k - \dTrue_k)| \\
    \leq \ &2\kappa_H\|\bar{d}_k\|\|\bar{d}_k - \dTrue_k\| + \kappa_H\|\bar{d}_k - \dTrue_k\|^2 \\
    \leq \ &\left(\tfrac{2\kappa_H\zeta^{-1}\kappa_{\mathrm{FO}}\alpha_k}{\sqrt{\kappa_l\bar\tau_k}} + \kappa_H\zeta^{-2}\kappa_{\mathrm{FO}}^2\alpha_k^2\right)\Delta l(x_k,\bar\tau_k,\bar{g}_k,\bar{d}_k),
\end{align*}
which completes the first part of the proof.

If $\|\bar{g}_k - \nabla f_k\| \leq \epsilon_g$, following similar logic as the first part of the proof, by the triangle and Cauchy–Schwarz inequalities,  \eqref{eq.SQP_det}, and Lemmas~\ref{lem.dy_bound},~\ref{lem.Deltal_lb_det} and~\ref{lem.step_diff_bound}, 
\begin{align*}
    &|\nabla f_k^T\dTrue_k - \bar{g}_k^T\bar{d}_k| \\
    = \ &|(\bar{g}_k - \nabla f_k)^T(\bar{d}_k - \dTrue_k) + (\bar{g}_k - \nabla f_k)^T\dTrue_k + \nabla f_k^T(\bar{d}_k - \dTrue_k)| \\
    = \ &|(\bar{g}_k - \nabla f_k)^T(\bar{d}_k - \dTrue_k) + (\bar{g}_k - \nabla f_k)^T\dTrue_k - (H_k\dTrue_k + J_k^T\yTrue_k)^T(\bar{d}_k - \dTrue_k)| \\
    \leq \ &|(\bar{g}_k - \nabla f_k)^T(\bar{d}_k - \dTrue_k)| + |(\bar{g}_k - \nabla f_k)^T\dTrue_k| + |\dTrue_k^TH_k(\bar{d}_k - \dTrue_k)| + |\yTrue_k^TJ_k(\bar{d}_k - \dTrue_k)| \\
    \leq \ &\zeta^{-1}\|\bar{g}_k - \nabla f_k\|^2 + (1 + \kappa_H\zeta^{-1})\|\dTrue_k\|\|\bar{g}_k - \nabla f_k\| \\
    \leq \ &\zeta^{-1}\epsilon_g^2 + \tfrac{(1+\kappa_H\zeta^{-1})\epsilon_g}{\sqrt{\kappa_l\tauTrue_k}}\sqrt{\Delta l(x_k,\tauTrue_k,\nabla f_k,\dTrue_k)}.
\end{align*}
Additionally, under Assumption~\ref{ass.H} it follows that
\begin{align*}
    |\dTrue_k^TH_k\dTrue_k - \bar{d}_k^TH_k\bar{d}_k|= \ &|(\dTrue_k - \bar{d}_k)^TH_k(\dTrue_k - \bar{d}_k) + 2\dTrue_k^TH_k(\bar{d}_k - \dTrue_k)| \\
    \leq \ &\kappa_H\|\dTrue_k - \bar{d}_k\|^2 + 2\kappa_H\|\dTrue_k\|\|\dTrue_k - \bar{d}_k\| \\
    \leq \ &\kappa_H\zeta^{-2}\epsilon_g^2 + \tfrac{2\kappa_H\zeta^{-1}\epsilon_g}{\sqrt{\kappa_l\tauTrue_k}}\sqrt{\Delta l(x_k,\tauTrue_k,\nabla f_k,\dTrue_k)},
\end{align*}
which completes the proof.
\end{proof}

The next lemma provides a bound on the merit function across an iteration.

\begin{lemma}\label{lem.suff_decrease_exact}
For all $k\in\N{}$ in any realization 
of Algorithm~\ref{alg.sqp_line_search_practical},
\begin{equation*}
\begin{aligned}
    &\phi(x_k + \alpha_k\bar{d}_k,\bar\tau_k) - \phi(x_k,\bar\tau_k) \\
    \leq \ &-\alpha_k\Delta l(x_k,\bar\tau_k,\bar{g}_k,\bar{d}_k) + \alpha_k\bar\tau_k(\nabla f_k - \bar{g}_k)^T\bar{d}_k + \tfrac{\bar\tau_kL+\Gamma}{2}\alpha_k^2\|\bar{d}_k\|^2.
\end{aligned}
\end{equation*}
\end{lemma}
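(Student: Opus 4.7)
The plan is to combine the two Lipschitz-type bounds in \eqref{eq.lipschitz_continuity}, applied with $x = x_k$, $d = \bar{d}_k$, $\alpha = \alpha_k$, with the linearized feasibility property from the SQP system \eqref{eq.SQP} and then replace the true gradient inner product by the stochastic one plus an explicit error term. This is a standard merit-function decrease derivation; the only slightly delicate point is handling the $\ell_1$ term on the constraints, which requires $\alpha_k \leq \alpha_{\max} \leq 1$.

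First I would write $\phi(x_k + \alpha_k \bar{d}_k, \bar\tau_k) = \bar\tau_k f(x_k + \alpha_k \bar{d}_k) + \|c(x_k+\alpha_k\bar{d}_k)\|_1$ and apply the first inequality of \eqref{eq.lipschitz_continuity} to the $f$-term (scaled by $\bar\tau_k > 0$, which is positive by Lemma~\ref{lem.model_reduction}) and the second inequality to the $\|c\|_1$-term. For the constraint term, I would use the second block of the SQP system \eqref{eq.SQP}, namely $J_k \bar{d}_k = -c_k$, to obtain $c_k + \alpha_k \nabla c(x_k)^T \bar{d}_k = (1-\alpha_k)c_k$. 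Because $\alpha_k \in (0, \alpha_{\max}]$ and $\alpha_{\max} \leq 1$, the scalar $1-\alpha_k$ is nonnegative, so $\|(1-\alpha_k) c_k\|_1 = (1-\alpha_k)\|c_k\|_1$.

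Adding these two bounds and subtracting $\phi(x_k,\bar\tau_k) = \bar\tau_k f_k + \|c_k\|_1$ yields
\begin{equation*}
\phi(x_k + \alpha_k \bar{d}_k, \bar\tau_k) - \phi(x_k,\bar\tau_k) \leq \alpha_k \bar\tau_k \nabla f_k^T \bar{d}_k - \alpha_k \|c_k\|_1 + \tfrac{\bar\tau_k L + \Gamma}{2}\alpha_k^2 \|\bar{d}_k\|^2.
\end{equation*}
Finally I would split $\nabla f_k^T \bar{d}_k = (\nabla f_k - \bar{g}_k)^T \bar{d}_k + \bar{g}_k^T \bar{d}_k$ and identify the resulting $-\alpha_k(-\bar\tau_k \bar{g}_k^T \bar{d}_k + \|c_k\|_1) = -\alpha_k \Delta l(x_k,\bar\tau_k,\bar{g}_k,\bar{d}_k)$ via the definition of the model reduction in \eqref{eq.model_reduction}. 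This gives the claimed inequality.

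No step here is genuinely hard: everything follows from \eqref{eq.lipschitz_continuity}, the linear feasibility row of \eqref{eq.SQP}, and algebraic rearrangement. The one place to be careful is justifying $\|(1-\alpha_k) c_k\|_1 = (1-\alpha_k)\|c_k\|_1$, which relies on the standing restriction $\alpha_k \in (0,\alpha_{\max}]$ with $\alpha_{\max} \leq 1$ built into Algorithm~\ref{alg.sqp_line_search_practical}; this is exactly the reason the unit upper bound on $\alpha_{\max}$ is imposed.
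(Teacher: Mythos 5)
Your proof is correct and follows essentially the same route as the paper: apply the two Lipschitz bounds in \eqref{eq.lipschitz_continuity}, use $J_k\bar{d}_k=-c_k$ together with $\alpha_k\le\alpha_{\max}\le 1$ to reduce the constraint term to $-\alpha_k\|c_k\|_1$, and then split $\nabla f_k^T\bar{d}_k$ into $\bar{g}_k^T\bar{d}_k$ plus the error term. The only cosmetic difference is that the paper first writes $c_k+\alpha_k J_k\bar d_k$ as a convex combination and applies the triangle inequality before invoking $c_k+J_k\bar d_k=0$, whereas you substitute the linearized-feasibility identity directly; both hinge on the same facts.
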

\begin{proof}
By Algorithm~\ref{alg.sqp_line_search_practical}, for any $k\in\N{}$, $0 < \alpha_k\leq \alpha_{\max} \leq 1$. Moreover, by the triangle inequality, \eqref{eq.lipschitz_continuity}, \eqref{eq.merit_function} and \eqref{eq.SQP}, it follows that
\begin{align*}
&\phi(x_k + \alpha_k \bar{d}_k, \bar\tau_k) - \phi(x_k,\bar\tau_k) \\
= \ &\bar\tau_k(f(x_k + \alpha_k\bar{d}_k) - f_k) + (\|c(x_k + \alpha_k\bar{d}_k)\|_1 - \|c_k\|_1) \\
\leq \ &\bar\tau_k(\alpha_k\nabla f_k^T\bar{d}_k + \tfrac{L}{2}\alpha_k^2\|\bar{d}_k\|^2) + (\|c_k + \alpha_kJ_k\bar{d}_k\|_1 - \|c_k\|_1 + \tfrac{\Gamma}{2}\alpha_k^2\|\bar{d}_k\|^2) \\
\leq \ &\alpha_k\bar\tau_k\nabla f_k^T\bar{d}_k + |1-\alpha_k|\|c_k\|_1 + \alpha_k\|c_k + J_k\bar{d}_k\|_1 - \|c_k\|_1 + \tfrac{\bar\tau_kL+\Gamma}{2}\alpha_k^2\|\bar{d}_k\|^2 \\
    = \ &-\alpha_k\Delta l(x_k,\bar\tau_k,\bar{g}_k,\bar{d}_k) + \alpha_k\bar\tau_k(\nabla f_k - \bar{g}_k)^T\bar{d}_k + \tfrac{\bar\tau_kL+\Gamma}{2}\alpha_k^2\|\bar{d}_k\|^2,
\end{align*}
which completes the proof.
\end{proof}

Due to the quality and reliability of the zeroth- and first-order oracles (Oracles~\ref{oracle.zero} and \ref{oracle.first}), one can only guarantee convergence to a neighborhood of the solution. Assumption~\ref{ass.main_accuracy} provides a lower bound on the size of the convergence neighbourhood in terms of $\varepsilon$ (and $\varepsilon_{\Delta l}$). We note again that similar restrictions are required in the unconstrained noisy setting with inexact probabilistic oracles; see e.g., \cite{BeraCaoSche21,jin2022high}.

\begin{assumption}\label{ass.main_accuracy}
\change{When event $\mathcal{E}$ occurs (see Assumption~\ref{ass.good_merit_paramter}), let} 
\begin{equation*}
    \varepsilon >  \max\left\{\tfrac{\epsilon_g}{\eta},\omega_{10}\sqrt{\epsilon_f}\right\} \tfrac{\max\{\kappa_H,1\}}{\sqrt{\kappa_l\tau_{\min}}},
\end{equation*}
which is equivalent to $\varepsilon_{\Delta l} > \max\left\{\tfrac{\epsilon_g}{\eta},\omega_{10}\sqrt{\epsilon_f}\right\}$ 
by Remark~\ref{rem.stationary_measure}, 
where \change{$\{\kappa_H,\tau_{\min},\kappa_l\}\subset\R{}_{>0}$ are defined in Assumption~\ref{ass.H} and Lemmas~\ref{lem.deter_tau_lb} and~\ref{lem.Deltal_lb_det}, $(\epsilon_f,\epsilon_g)$ are intrinsic to Oracles~\ref{oracle.zero} and~\ref{oracle.first} conditioned on the event $\Ecal$,} $0 < \eta < 2(1-\theta) \min\left\{ \tfrac{1}{\eta_1 + \eta_2}, \tfrac{1}{\eta_3 + \eta_4}\right\}$
, and $\{\eta_1,\eta_2,\eta_3, \eta_4,\omega_{10}\}\subset\R{}_{>0}$ 
are defined in Table~\ref{tbl:constants} (Appendix~\ref{app.constants}) and $p \in \left(\tfrac 1 2,1\right]$.
\end{assumption}

Assumption~\ref{ass.main_accuracy} involves many constants and is indeed hard to parse. We make all constants explicit in order to show the exact dependence on the convergence neighborhood. That being said, what is important is that the lower bound of $\varepsilon$ is proportional to the bias in the gradient approximations and proportional to the square root of the noise level in the function approximations.

We are now ready to present the key lemma of this section. 
In Lemma~\ref{algo_behave}, we first define $(p,\tilde\alpha,h(\cdot))$, where $p \in \left(\tfrac 1 2,1\right]$ is a lower bound on the probability of a \textbf{\emph{true}} iteration conditioned on the past (before the stopping time), $\tilde\alpha \in\mathbb{R}_{>0}$ is the \textbf{\emph{large step}} threshold, and $h:\mathbb{R}_{>0}\to\mathbb{R}_{>0}$ is a monotonically increasing function (in $\alpha$) that bounds the potential progress made at any given iteration. Moreover, we prove five results that can be summarized as follows: $(i)$ lower bound (proportional to $\epsilon_f$) on the potential progress with step size $\tilde\alpha$; $(ii)$ conditioned on the past \change{and on event $\mathcal{E}$ 
(see Assumption~\ref{ass.good_merit_paramter})}, the next iteration is \textbf{\emph{true}} with probability at least $p$; $(iii)$ bound the potential progress made in any \textbf{\emph{true}} and \textbf{\emph{successful}} iterations \change{conditioned on 
event $\mathcal{E}$}; 
$(iv)$ \textbf{\emph{true}} iterations with \textbf{\emph{small step}} sizes are \textbf{\emph{successful}} \change{conditioned on 
event $\mathcal{E}$}; 
and, $(v)$ bound (proportional to $\epsilon_f$) the potential increase in $Z_k$ at any iteration $k$.

\begin{lemma}\label{algo_behave}
Suppose Assumptions~\ref{ass.good_merit_paramter} and~\ref{ass.main_accuracy} hold. For all $k< T_{\varepsilon_{\Delta l}}$, let
\begin{itemize}[leftmargin=0.2in]
    \item $p = 1 - \delta$ when the noise is bounded by $\epsilon_f$, and $p = 1-\delta -\exp\left(-\min\{\tfrac{u^2}{2\nu^2},\tfrac{u}{2b}\}\right)$ otherwise $($with \change{$u = \inf_{x\in\mathcal{X}} \{\epsilon_f - \E_{\Xi^0}[E(x,\Xi^0)|\Ecal]\}$}, 
    \item $h(\alpha) = \alpha\theta\varepsilon_{\Delta l}^2\min\left\{\tfrac{1-\eta\omega_4}{1+\epsilon_{\tau}\omega_1}, 1-\eta\omega_5, \tfrac{1}{1+\omega_2+\omega_3}\right\} $,
    \item and, $\tilde\alpha = \min\left\{\tfrac{1-\theta}{\omega_7}, \tfrac{\omega_8}{\bar\tau_{\min}L+\Gamma} \right\}$.

    The constants $\{\omega_1,\omega_2,\omega_3,\omega_4,\omega_5,\omega_7,\omega_8\} \subset\R{}_{>0}$ are defined in Table~\ref{tbl:constants} (Appendix~\ref{app.constants}).
\end{itemize}
Then,  the following results hold: 
\begin{itemize}
	\item[(i)] $h(\tilde\alpha) > \tfrac{4\bar\tau_{-1}}{p-1/2}\epsilon_f$. 
	\item[(ii)] $\Pmbb\left[I_k = 1|\change{\Fcal_{k-1}\cap \mathcal{E}}\right] \geq p$ with some $p\in \left(\tfrac{1}{2} + \tfrac{4\bar\tau_{-1}\epsilon_f}{h(\tilde\alpha)},1\right]$. 
    \item[(iii)] For any realization 
    of Algorithm~\ref{alg.sqp_line_search_practical} \change{for which event $\mathcal{E}$ occurs (see Assumption~\ref{ass.good_merit_paramter})}, if an iteration $k$ is \textbf{true} and \textbf{successful}, then $Z_{k+1} \leq Z_k - h(\alpha_k) + 4\bar\tau_{-1}\epsilon_f$. 
    \item[(iv)] For any realization 
    of Algorithm~\ref{alg.sqp_line_search_practical} \change{for which event $\mathcal{E}$ occurs (see Assumption~\ref{ass.good_merit_paramter})}, if an iteration $k$ is \textbf{true} and $\alpha_k \leq \tilde\alpha$, then the iteration $k$ is also \textbf{successful}. 
    \item[(v)] For all $k\in\mathbb{N}$ in any realization 
    of Algorithm~\ref{alg.sqp_line_search_practical}, $Z_{k+1} \leq Z_k + 2\bar\tau_{-1}\epsilon_f + \bar\tau_{-1}(e_k + e_k^+)$. 
\end{itemize}
\end{lemma}
\begin{proof} For brevity, the proof is deferred to Appendix~\ref{app.proof_lemma}.
\end{proof}

The next two lemmas will be used in the $\varepsilon$-complexity analysis that follows.

\begin{lemma}\label{lem:azumaheoffding}
\change{Suppose Assumption~\ref{ass.good_merit_paramter} holds.}
For all $ t\geq 1$ and any $\hat{p} \in [0, p)$, we have 
\begin{equation*}
    \P\left[\sum_{k=0}^{t-1} I_k<\hat pt \change{\Bigg| \mathcal{E}} \right] \leq e^{-\tfrac{(p-\hat{p})^2}{2p^2}t}\change{,}
\end{equation*}
\change{where event $\mathcal{E}$ is defined in Assumption~\ref{ass.good_merit_paramter}.}
\end{lemma}
\begin{proof}
The proof is the same as \cite[Lemma $3.1$]{jin2022high}.
\end{proof}

\begin{lemma}\label{lemma4}
\change{Suppose Assumption~\ref{ass.good_merit_paramter} holds.}
For any positive integer $t$ and any $\hat{p} \in \left(\tfrac12, 1\right]$, we have
\begin{equation*}
\P\left[T_{\varepsilon_{\Delta l}} > t,\; 	\sum_{k=0}^{t-1} I_k\geq \hat pt, \sum_{k=0}^{t-1} \Theta_kI_kU_k < \left(\hat{p}-\tfrac12\right)t - \tfrac{l}{2} \change{\Bigg| \mathcal{E}} \right]=0,
\end{equation*}
where $l = \max\left\{ - \tfrac{\ln \alpha_0 - \ln \tilde\alpha}{\ln\gamma}, 0\right\}$ \change{and event $\mathcal{E}$ is defined in Assumption~\ref{ass.good_merit_paramter}}. 
\end{lemma}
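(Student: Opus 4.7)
My plan is to show this deterministically on every sample path in the event $\{T_{\varepsilon_{\Delta l}}>t,\ \sum_{k=0}^{t-1} I_k\geq \hat p t\}$, so that the probability of the complementary event is zero. Writing $B_t:=\sum_{k=0}^{t-1} I_k(1-\Theta_k U_k)$ for the count of ``uncounted'' true iterations, one has $\sum_{k=0}^{t-1}\Theta_k I_k U_k=\sum_{k=0}^{t-1} I_k-B_t\geq \hat p t-B_t$, so it suffices to establish the deterministic bound $B_t\leq(t+l)/2$ on the event.

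To unpack $B_t$, I would first classify each $k<T_{\varepsilon_{\Delta l}}$ with $I_k=1$ and $\Theta_kU_k=0$ into two disjoint types via the contrapositive of Lemma~\ref{algo_behave}(iv): type (I), $\Theta_k=0$, which forces $\alpha_k>\tilde\alpha$ and hence $\alpha_{k+1}=\gamma\alpha_k$; and type (II), $\Theta_k=1$ with $U_k=0$, in which the successful branch gives $\alpha_{k+1}\geq\alpha_k$ and so $\min\{\alpha_k,\alpha_{k+1}\}<\tilde\alpha$ forces $\alpha_k<\tilde\alpha$. In other words, the uncounted true iterations are exactly the true shrinks from above $\tilde\alpha$ together with the true growths from below $\tilde\alpha$.

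The heart of the argument would then be a reflected-random-walk bound on the log-scale index $a_k:=\log_{1/\gamma}(\tilde\alpha/\alpha_k)$, for which $a_k>0$ iff $\alpha_k<\tilde\alpha$ and the algorithmic update gives $a_{k+1}=a_k+1$ on unsuccessful iterations, $a_{k+1}=a_k-1$ on uncapped successful iterations, and $a_{k+1}=\log_{1/\gamma}(\tilde\alpha/\alpha_{\max})\leq 0$ on successful iterations capped at $\alpha_{\max}$. The key observation is that both uncounted types cause $|a_{k+1}|\leq|a_k|-1$: for type (I), $a_k\leq -1$ and $a_{k+1}=a_k+1\leq 0$, so $|a_{k+1}|=|a_k|-1$; for type (II), $a_k\geq 1$ and $a_{k+1}\in[0,\,a_k-1]$ (capping only makes $a_{k+1}$ smaller and hence $|a_{k+1}|$ no larger). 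Consequently $B_t$ is bounded by the total number of down-moves of $|a_k|$ over $k=0,\ldots,t-1$.

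The conclusion then follows from the nonnegativity of $|a_k|$: if $D$ and $U$ denote the numbers of strict down-moves and up-moves of $|a_k|$ in the first $t$ iterations, then $|a_t|=|a_0|+U-D\geq 0$ yields $D\leq U+|a_0|$, and combining with $U+D\leq t$ gives $D\leq(t+|a_0|)/2$. Since $|a_0|=l$ under the paper's convention (the regime $\alpha_0\geq\tilde\alpha$ implicit in the formula for $l$), we obtain $B_t\leq D\leq(t+l)/2$, whence $\sum_{k=0}^{t-1}\Theta_k I_k U_k\geq(\hat p-\tfrac12)t-l/2$ as required. The main obstacle will be the bookkeeping around the capped successful case, whose $|a_k|$-decrement can exceed $1$; this is harmless for the upper bound because capping never produces an up-move of $|a_k|$ and each capped step still consumes exactly one slot in the walk.
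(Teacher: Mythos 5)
The paper disposes of this lemma with a one-line citation to \cite[Lemma~3.5]{jin2022high}, so you are attempting something more self-contained. Your reduction to the deterministic bound $B_t\leq(t+l)/2$ on the event, and your classification of the uncounted true iterations into true-unsuccessful steps (which force $\alpha_k>\tilde\alpha$ via the contrapositive of Lemma~\ref{algo_behave}(iv)) and true-successful small steps (which force $\alpha_k<\tilde\alpha$), are correct and are the right skeleton. The gap is in the central claim that every uncounted true iteration satisfies $|a_{k+1}|\leq|a_k|-1$. For a type~(I) iteration you assert $a_k\leq-1$, but all that is available is $\alpha_k>\tilde\alpha$, i.e.\ $a_k<0$; nothing forces $\alpha_k\geq\gamma^{-1}\tilde\alpha$. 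The reachable step sizes form the multiplicative grid $\{\gamma^j\alpha_0\}$ (shifted to $\{\gamma^j\alpha_{\max}\}$ after a capping event), whereas $\tilde\alpha$ is the analytic constant of Lemma~\ref{algo_behave} and has no reason to lie on that grid. If $\tilde\alpha$ sits strictly between two grid points $\gamma\beta<\tilde\alpha<\beta$, a true-unsuccessful iteration at $\alpha_k=\beta$ has $a_k\in(-1,0)$ and $a_{k+1}=a_k+1\in(0,1)$, so $|a_k|$ need not drop by one and can even increase; the forced true-successful iteration at $\gamma\beta$ that follows is type~(II) with $a_k\in(0,1)$ and has the same defect. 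The walk can shuttle between $\beta$ and $\gamma\beta$ for many consecutive uncounted true iterations while $|a_k|$ stays bounded, so $B_t\leq D\leq(t+|a_0|)/2$ does not follow from nonnegativity of $|a_k|$. This is not peripheral bookkeeping: the threshold-crossing configuration is precisely the extremal pattern that the $t/2$ term in the bound is there to absorb, so it is the one case the proof cannot afford to lose.

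To close the argument you must either (a) make the alignment explicit --- replace $\tilde\alpha$ by the largest reachable step size not exceeding it before defining $U_k$, $l$, and $h(\tilde\alpha)$, check that Lemma~\ref{algo_behave}(i), (iii), (iv) survive the replacement (they do, essentially at the cost of a factor $\gamma$ in $h(\tilde\alpha)$ and hence in the constants of Theorem~\ref{thm:subexp_noise}), and additionally rule out the grid shift caused by capping, e.g.\ by assuming $\alpha_{\max}\in\gamma^{-\mathbb{N}}\alpha_0$ --- or (b) abandon the pointwise potential decrement and instead pair each threshold-crossing down-move with the up-move (or the unit of initial displacement, which is where the $l$ term comes from) that preceded it. Two further loose ends: the identity $|a_0|=l$ fails when $\alpha_0<\tilde\alpha$ (there $l=0$ but $|a_0|>0$), so that regime needs its own treatment rather than being dismissed as ``the paper's convention''; and the assertion that capping ``never produces an up-move of $|a_k|$'' is false when $a_k<0$, although in that regime the increase is strictly less than one and the iteration is a large step, so this particular slip is repairable.
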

\begin{proof}
The proof is the same as \cite[Lemma $3.5$]{jin2022high}.
\end{proof}

We now present the main theorem; the iteration $\varepsilon$-complexity of Algorithm~\ref{alg.sqp_line_search_practical}.

\begin{theorem}\label{thm:subexp_noise}
\sloppy Suppose Assumptions~\ref{ass.prob}, \ref{ass.H}, \ref{ass.good_merit_paramter} and \ref{ass.main_accuracy} hold and that the conditions of Oracles~\ref{oracle.zero} and~\ref{oracle.first} are satisfied \change{conditioned on the 
event $\mathcal{E}$ (see Assumption~\ref{ass.good_merit_paramter})}. Then, for any $s \geq 0$, $\hat{p} \in \left( \tfrac12 + \tfrac{4\bar\tau_{-1}\epsilon_{f}+s}{h(\tilde{\alpha})}, p\right)$, and $t \geq \tfrac{R}{\hat{p} - \tfrac12 - \tfrac{{4\bar\tau_{-1}\epsilon_{f}}+s}{h(\tilde{\alpha})}}$, 
\begin{equation*}
    \P\left[T_{\varepsilon_{\Delta l}} \leq t \change{| \mathcal{E}}\right] \geq 1 - e^{-\tfrac{(p-\hat{p})^2}{2p^2}t} - e^{-\min\left\{\tfrac{s^2t}{2(2\bar\tau_{-1}\nu)^2},\tfrac{st}{2(2\bar\tau_{-1}b)}\right\}},
\end{equation*}
where $R = \tfrac{Z_0}{h(\tilde\alpha)}+\max\left\{ \tfrac{\ln \tilde\alpha -\ln \alpha_0 }{2\ln\gamma}, 0\right\}$, and $(p,\tilde\alpha,h(\cdot))$ 
are as defined in Lemma~\ref{algo_behave}.
\end{theorem}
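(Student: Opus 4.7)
The plan is to bound $\P[T_{\varepsilon_{\Delta l}} > t]$ by a union bound over two ``bad'' events whose complement, intersected with $\{T_{\varepsilon_{\Delta l}} > t\}$, is empty for $t$ above the theorem's threshold. Specifically, I would introduce the ``too few true iterations'' event $\mathcal{A}_t := \{\sum_{k=0}^{t-1} I_k < \hat{p} t\}$ and the ``unusually large cumulative noise'' event $\mathcal{B}_t := \{\bar\tau_{-1}\sum_{k=0}^{t-1}(E_k + E_k^+) > 2\bar\tau_{-1}\epsilon_f t + s t\}$. Proving $\mathcal{A}_t^c \cap \mathcal{B}_t^c \cap \{T_{\varepsilon_{\Delta l}} > t\} = \emptyset$ for $t$ at least the stated lower bound then yields $\P[T_{\varepsilon_{\Delta l}} > t] \leq \P[\mathcal{A}_t] + \P[\mathcal{B}_t]$.

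For the tail probabilities, Lemma~\ref{lem:azumaheoffding} applied with $p$ from Lemma~\ref{algo_behave}(ii) gives $\P[\mathcal{A}_t] \leq e^{-(p-\hat{p})^2 t/(2p^2)}$. For $\mathcal{B}_t$ I appeal to Assumption~\ref{ass.linesearch_basic_assumption}: in the deterministically bounded case $\mathcal{B}_t$ is empty (take $s = 0$); otherwise, by independence of $\{E_k + E_k^+\}$ across $k$, together with the facts that each $E_k + E_k^+$ is one-sided sub-exponential with parameters $(2\nu, 2b)$ and mean at most $2\epsilon_f$, the rescaled sum $\bar\tau_{-1}\sum(E_k + E_k^+)$ is sub-exponential with parameters $(2\bar\tau_{-1}\nu\sqrt{t}, 2\bar\tau_{-1}b)$, so the one-sided Bernstein inequality gives $\P[\mathcal{B}_t] \leq e^{-\min\{s^2 t/(2(2\bar\tau_{-1}\nu)^2),\, s t/(2\cdot 2\bar\tau_{-1}b)\}}$, matching the second exponential in the theorem.

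The heart of the argument is the emptiness of $\mathcal{A}_t^c \cap \mathcal{B}_t^c \cap \{T_{\varepsilon_{\Delta l}} > t\}$, established via two conflicting bounds on $N_t := \sum_{k=0}^{t-1} \Theta_k I_k U_k$. On this event, Lemma~\ref{lemma4} gives the lower bound $N_t \geq (\hat{p} - \tfrac12)t - l/2$. For the matching upper bound, I would telescope $Z_{k+1} - Z_k$ across $k = 0, \ldots, t-1$: on large-step, true, successful iterations I use the sharper intermediate consequence of Lemma~\ref{algo_behave}(iii), namely $Z_{k+1} - Z_k \leq -h(\tilde\alpha) + 2\bar\tau_{-1}\epsilon_f + \bar\tau_{-1}(e_k + e_k^+)$ (visible in the proof of (iii) before the final collapse $e_k + e_k^+ \leq 2\epsilon_f$ is invoked), combined with monotonicity $h(\alpha_k) \geq h(\tilde\alpha)$; on the remaining iterations I use Lemma~\ref{algo_behave}(v). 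Summing and using $Z_t \geq 0$ from Lemma~\ref{lem_progress} produces $N_t h(\tilde\alpha) \leq Z_0 + 2\bar\tau_{-1}\epsilon_f t + \bar\tau_{-1}\sum_{k=0}^{t-1}(e_k + e_k^+)$, which on $\mathcal{B}_t^c$ collapses to $N_t h(\tilde\alpha) \leq Z_0 + (4\bar\tau_{-1}\epsilon_f + s)t$.

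Chaining the two bounds on $N_t$ then yields $\bigl((\hat{p} - \tfrac12) - (4\bar\tau_{-1}\epsilon_f + s)/h(\tilde\alpha)\bigr) t \leq Z_0/h(\tilde\alpha) + l/2 = R$, which contradicts the choice of $t$ above the theorem's lower bound, establishing emptiness and concluding the proof via the union bound. The main obstacle is the careful telescoping bookkeeping: one must keep the deterministic relaxation $2\bar\tau_{-1}\epsilon_f$ (from the modified sufficient-decrease condition) separate from the stochastic noise $\bar\tau_{-1}(E_k + E_k^+)$ so that only the latter is controlled by Bernstein; using the coarser final statement of Lemma~\ref{algo_behave}(iii) rather than the sharper pre-bound would replace $h(\tilde\alpha)$ by $h(\tilde\alpha) - 2\bar\tau_{-1}\epsilon_f$ in the denominator, weakening the complexity bound.
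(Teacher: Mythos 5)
Your proposal is correct and follows essentially the same route as the paper's proof: the same two-event decomposition (excess cumulative noise controlled by one-sided Bernstein, too few true iterations controlled by Lemma~\ref{lem:azumaheoffding}), the same use of Lemma~\ref{lemma4} to lower-bound $\sum_k \Theta_k I_k U_k$, and the same telescoping of $Z_k$ via the intermediate form of Lemma~\ref{algo_behave}(iii) and part (v) to derive the contradiction with $Z_t \geq 0$ for $t$ above the stated threshold. The only difference is presentational: you take a direct union bound over the two bad events, whereas the paper nests the same splits via the law of total probability.
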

\begin{proof}
By the law of total probability,
\begin{align*}
    \P\left[T_{\varepsilon_{\Delta l}} > t \change{|\mathcal{E}}\right] = 
&\P\underbrace{\left[T_{\varepsilon_{\Delta l}} > t,\; \tfrac{1}{t}\sum_{k=0}^{t-1}(2\bar\tau_{-1}\epsilon_f+\bar\tau_{-1}(E_k+E^+_k)) > 4\bar\tau_{-1}\epsilon_f + s \change{\Bigg| \mathcal{E}}\right]}_{\textbf{\emph{A}}}\\
    &\quad + 
\P\underbrace{\left[T_{\varepsilon_{\Delta l}} > t,\; \tfrac{1}{t}\sum_{k=0}^{t-1}(2\bar\tau_{-1}\epsilon_f+\bar\tau_{-1}(E_k+E^+_k)) \leq 4\bar\tau_{-1}\epsilon_f + s\change{\Bigg| \mathcal{E}}\right]}_{\textbf{\emph{B}}}.
\end{align*}
First we bound $\P[\textbf{\emph{A}}]$. \change{Conditioned on the event $\mathcal{E}$ (see Assumption~\ref{ass.good_merit_paramter}), for each iteration $k$, since $E_k$ and $E_k^+$ satisfy the one-sided sub-exponential bound \eqref{eq:zero_order} with parameters $(\nu,b)$, one can show that $\bar\tau_{-1}(E_k+E^+_k)$ satisfies \eqref{eq:zero_order} with parameters $(2\bar\tau_{-1}\nu, 2\bar\tau_{-1}b)$.} 
Moreover, since \change{$\mathbb{E}[\bar\tau_{-1}(E_k+E^+_k)|\Ecal]$ is bounded} by $2\bar\tau_{-1}\epsilon_f$,
applying the one-sided Bernstein's inequality, for any $s \geq 0$
\begin{align*}
    \P[\textbf{\emph{A}}] 
         \leq\ \P\left[\tfrac{1}{t}\sum_{k=0}^{t-1}\bar\tau_{-1}(E_k+E^+_k)>  2\bar\tau_{-1}\epsilon_f + s \change{\Bigg| \mathcal{E}}\right]\leq e^{-\min\left\{\tfrac{s^2t}{2(2\bar\tau_{-1}\nu)^2},\tfrac{st}{2(2\bar\tau_{-1}b)}\right\}}.
\end{align*}
Let $l = \max\left\{ - \tfrac{\ln \alpha_0 - \ln \tilde{\alpha}}{\ln\gamma}, 0\right\}$.
To bound $\P[B]$ we apply the law of total probability,
\change{
\begin{align*}
    \P[\textbf{\emph{B}}] &= \P\underbrace{\left[\sum_{k=0}^{t-1} \Theta_kI_kU_k < \left(\hat{p}-\tfrac12\right)t - \tfrac{l}{2},\ \textbf{\emph{B}}\right]}_{\textbf{\emph{B}}_1} +\P\underbrace{\left[\sum_{k=0}^{t-1} \Theta_kI_kU_k \geq \left(\hat{p}-\tfrac12\right)t - \tfrac{l}{2}, \ \textbf{\emph{B}}\right]}_{\textbf{\emph{B}}_2}.
\end{align*}}

\noindent We first show that $\P[\textbf{\emph{B}}_2]=0$. By Lemma \ref{algo_behave}, \change{in any realization of Algorithm~\ref{alg.sqp_line_search_practical} for which event $\mathcal{E}$ occurs (see Assumption~\ref{ass.good_merit_paramter}),} for any iteration $k < T_{\varepsilon_{\Delta l}}$, it follows that $Z_{k+1} \leq Z_k - h(\tilde{\alpha}) + 2\bar\tau_{-1}\epsilon_f+\bar\tau_{-1}(E_k+E^+_k)\leq Z_k - h(\tilde{\alpha}) + 4\bar\tau_{-1}\epsilon_f$ if $U_kI_k\Theta_k = 1$,  and $Z_{k+1} \leq Z_k + 2\bar\tau_{-1}\epsilon_f+\bar\tau_{-1}(E_k+E^+_k)$ if $U_kI_k\Theta_k = 0$. 
By \change{Assumption~\ref{ass.good_merit_paramter}, $\E[E_k|\mathcal{E}]$ and $\E[E_k^+|\mathcal{E}]$} are bounded above by $\epsilon_f$ for all $k$. \change{Conditioned on event $\mathcal{E}$ (see Assumption~\ref{ass.good_merit_paramter}), the} event $T_{\varepsilon_{\Delta l}} > t$ implies that $Z_t>0$ (since $Z_t=0$ can only happen when  $T_{\varepsilon_{\Delta l}} \leq t$ by the proof of Lemma~\ref{lem_progress}). 
This together with $ \tfrac{1}{t}\sum_{k=0}^{t-1}(2\bar\tau_{-1}\epsilon_f+\bar\tau_{-1}(E_k+E^+_k)) \leq 4\bar\tau_{-1}\epsilon_f + s$ in turn implies  the event $\sum_{k=0}^{t-1} \Theta_kI_kU_k < \left(\hat{p}-\tfrac12\right)t - \tfrac{l}{2}$. To see this, assume that  $\sum_{k=0}^{t-1} \Theta_kI_kU_k \geq \left(\hat{p}-\tfrac12\right)t - \tfrac{l}{2}$, then 
\begin{align*}
    Z_t &\leq Z_0 - \left[\left(\left(\hat{p} - \tfrac12\right) t - \tfrac{l}{2}\right) h(\tilde{\alpha}) - \sum_{k=0}^{t-1}(2\bar\tau_{-1}\epsilon_f+\bar\tau_{-1}(E_k+E^+_k))\right] \\
    &\leq Z_0 - \left(\left(\hat{p} - \tfrac12\right) t - \tfrac{l}{2}\right) h(\tilde{\alpha}) + t(4\bar\tau_{-1}\epsilon_f+s) \\
    &= Z_0 -  \left(\left(\hat{p}-\tfrac12\right)h(\tilde\alpha)-(4\bar\tau_{-1}\epsilon_f+s)\right)t + \tfrac{l}{2} h(\tilde\alpha) \leq 0.
\end{align*}
The last inequality above is due to the  assumption that
$\hat{p} > \tfrac12 + \tfrac{4\bar\tau_{-1}\epsilon_f+s}{h(\tilde{\alpha})}$ and     $t \geq \tfrac{R}{\hat{p} - \tfrac12 - \tfrac{4\bar\tau_{-1}\epsilon_f+s}{h(\tilde{\alpha})}}$. Hence, $\P[\textbf{\emph{B}}_2]=0$. We now bound $\P[\textbf{\emph{B}}_1]$; by Lemmas~\ref{lem:azumaheoffding} and~\ref{lemma4},
\begin{align*}
    \P[\textbf{\emph{B}}_1]\leq \ & \P\left[T_{\varepsilon_{\Delta l}} > t,\; \sum_{k=0}^{t-1} \Theta_kI_kU_k < \left(\hat{p}-\tfrac12\right)t - \tfrac{l}{2} \change{\Bigg| \mathcal{E}}\right] \\
    = \ & \P\left[T_{\varepsilon_{\Delta l}} > t,\; \sum_{k=0}^{t-1} \Theta_kI_kU_k < \left(\hat{p}-\tfrac12\right)t - \tfrac{l}{2}, \sum_{k=0}^{t-1} I_k<\hat pt \change{\Bigg| \mathcal{E}} \right] \\
    & + \P\left[T_{\varepsilon_{\Delta l}} > t,\; \sum_{k=0}^{t-1} \Theta_kI_kU_k < \left(\hat{p}-\tfrac12\right)t - \tfrac{l}{2}, \sum_{k=0}^{t-1} I_k\geq \hat pt \change{\Bigg| \mathcal{E}} \right]\\
    \leq\ & \P\left[\sum_{k=0}^{t-1} I_k<\hat pt \change{\Bigg| \mathcal{E}} \right] + \P\left[T_{\varepsilon_{\Delta l}} > t,\; \sum_{k=0}^{t-1} \Theta_kI_kU_k < \left(\hat{p}-\tfrac12\right)t - \tfrac{l}{2}, \sum_{k=0}^{t-1} I_k\geq \hat pt \change{\Bigg| \mathcal{E}} \right]\\
    \leq\ & e^{-\tfrac{(p-\hat{p})^2}{2p^2}t}+0=e^{-\tfrac{(p-\hat{p})^2}{2p^2}t}. 
\end{align*}
Combining $\P[\textbf{\emph{A}}]$ and $\P[\textbf{\emph{B}}]$ completes the proof. 
\end{proof}

\begin{corollary}\label{cor_main}
Under the conditions of Theorem~\ref{thm:subexp_noise}, for any $s \geq 0$, $\hat{p} \in \left( \tfrac12 + \tfrac{4\bar\tau_{-1}\epsilon_{f}+s}{\tilde\alpha\theta \omega_p\varepsilon_{\Delta l}^2}, p\right)$ and $t \geq \tfrac{\hat R}{\hat{p} - \tfrac12 - \tfrac{{4\bar\tau_{-1}\epsilon_{f}}+s}{\tilde\alpha\theta \omega_p\varepsilon_{\Delta l}^2}}$, 
\begin{equation}\label{eq.success_events}
    \P\left[T_{\varepsilon_{\Delta l}} \leq t \change{| \mathcal{E}}\right] \geq 1 - e^{-\tfrac{(p-\hat{p})^2}{2p^2}t} - e^{-\min\left\{\tfrac{s^2t}{2(2\bar\tau_{-1}\nu)^2},\tfrac{st}{2(2\bar\tau_{-1}b)}\right\}},
\end{equation}
where 
$\hat{R} = \tfrac{\phi(x_0,\bar\tau_{-1}) - \phi_{\min} -(\bar\tau_{-1} - \bar\tau_{\min})f_{\inf}}{\tilde\alpha\theta \omega_p\varepsilon_{\Delta l}^2}+\max\left\{ \tfrac{\ln \tilde{\alpha}-\ln \alpha_0 }{2\ln\gamma}, 0\right\}$, equivalently, by Remark~\ref{rem.stationary_measure}, $ \hat{R}= \tfrac{\max\{\kappa_H^2,1\}}{\kappa_l\tau_{\min}}\tfrac{\phi(x_0,\bar\tau_{-1}) - \phi_{\min} -(\bar\tau_{-1} - \bar\tau_{\min})f_{\inf}}{\tilde\alpha\theta \omega_p\varepsilon^2} + \max\left\{ \tfrac{\ln \tilde{\alpha}-\ln \alpha_0 }{2\ln\gamma}, 0\right\}$,
\change{event $\mathcal{E}$ is defined in Assumption~\ref{ass.good_merit_paramter},}
$\omega_p=\min\left\{\tfrac{1-\eta\omega_4}{1+\epsilon_{\tau}\omega_1}, 1-\eta\omega_5, \tfrac{1}{1+\omega_2+\omega_3}\right\}$, and the rest of the constants are defined in Table~\ref{tbl:constants} (Appendix~\ref{app.constants}).
\end{corollary}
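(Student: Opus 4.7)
The plan is to derive Corollary~\ref{cor_main} as a direct specialization of Theorem~\ref{thm:subexp_noise} by making the constants $h(\tilde\alpha)$ and $Z_0$ explicit. Since the theorem has already done the heavy lifting (bounding $\P[T_{\varepsilon_{\Delta l}} \leq t]$ in terms of the abstract quantities $h(\tilde\alpha)$ and $R$), the corollary amounts to unfolding definitions and observing that the resulting inequalities match those stated.

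First, I would unpack $h(\tilde\alpha)$. By Lemma~\ref{algo_behave}, $h(\alpha) = \alpha\theta\varepsilon_{\Delta l}^2 \cdot \omega_p$ where $\omega_p := \min\left\{\tfrac{1-\eta\omega_5}{1+\epsilon_{\tau}\omega_2}, 1-\eta\omega_6, \tfrac{1}{1+\omega_3+\omega_4}\right\}$. Substituting $\alpha = \tilde\alpha$ gives $h(\tilde\alpha) = \tilde\alpha\theta\omega_p\varepsilon_{\Delta l}^2$, which when inserted into the threshold $\tfrac{1}{2}+\tfrac{4\bar\tau_{-1}\epsilon_f + s}{h(\tilde\alpha)}$ of Theorem~\ref{thm:subexp_noise} yields precisely the lower bound for $\hat p$ in the corollary. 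The same substitution converts the condition $t \geq \tfrac{R}{\hat p - 1/2 - (4\bar\tau_{-1}\epsilon_f+s)/h(\tilde\alpha)}$ into the analogous condition in terms of $\hat R$ and $\tilde\alpha\theta\omega_p\varepsilon_{\Delta l}^2$.

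Next, I would unpack $R$. By Definition~\ref{def.iter_term}, $Z_0 = \phi(x_0, \bar\tau_{-1}) - \phi_{\min} - (\bar\tau_{-1}f_{\inf} - \bar\tau_{\min}f_{\inf}) = \phi(x_0, \bar\tau_{-1}) - \phi_{\min} - (\bar\tau_{-1} - \bar\tau_{\min})f_{\inf}$. Dividing by $h(\tilde\alpha) = \tilde\alpha\theta\omega_p\varepsilon_{\Delta l}^2$ and adding the $\max\{(\ln\tilde\alpha - \ln\alpha_0)/(2\ln\gamma), 0\}$ term produces the first expression for $\hat R$ in the corollary statement. The second (equivalent) expression for $\hat R$ in terms of $\varepsilon$ rather than $\varepsilon_{\Delta l}$ follows by invoking Remark~\ref{rem.stationary_measure}, which gives $\varepsilon_{\Delta l}^2 = \tfrac{\kappa_l\tau_{\min}}{\max\{\kappa_H^2,1\}}\varepsilon^2$, so the leading factor $\tfrac{\max\{\kappa_H^2,1\}}{\kappa_l\tau_{\min}}$ appears naturally in the denominator.

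Finally, the probability bound \eqref{eq.success_events} is identical in form to the conclusion of Theorem~\ref{thm:subexp_noise} — none of the substitutions above affect $p$, $\hat p$, $s$, $\nu$, $b$, or $\bar\tau_{-1}$ on the right-hand side. There is no real obstacle here since the corollary is essentially a bookkeeping step; the only subtlety is to verify consistency of the two forms of $\hat R$ via Remark~\ref{rem.stationary_measure}, which is a one-line algebraic substitution. Thus the proof reduces to: (i) cite Theorem~\ref{thm:subexp_noise}; (ii) substitute the explicit formula for $h(\tilde\alpha)$ from Lemma~\ref{algo_behave}; (iii) substitute the explicit formula for $Z_0$ from Definition~\ref{def.iter_term}; and (iv) apply Remark~\ref{rem.stationary_measure} to obtain the alternate $\varepsilon$-dependent form of $\hat R$.
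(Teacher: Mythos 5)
Your proposal is correct and follows exactly the route the paper intends: the corollary is stated without proof precisely because it is Theorem~\ref{thm:subexp_noise} with $h(\tilde\alpha)=\tilde\alpha\theta\omega_p\varepsilon_{\Delta l}^2$ and $Z_0$ written out, plus the change of variables from Remark~\ref{rem.stationary_measure}. One small imprecision: Definition~\ref{def.iter_term} gives $Z_0=\phi(x_0,\bar\tau_0)-\phi_{\min}-(\bar\tau_0-\bar\tau_{\min})f_{\inf}$ with $\bar\tau_0$, not $\bar\tau_{-1}$; since $\bar\tau_0\leq\bar\tau_{-1}$ and $\tau\mapsto\tau(f_0-f_{\inf})$ is non-decreasing, the numerator of $\hat R$ is an upper bound on $Z_0$, so $\hat R\geq R$ and the condition $t\geq\hat R/(\hat p-\tfrac12-\cdots)$ still implies the theorem's hypothesis on $t$ --- worth one extra line, but not a gap.
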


\begin{remark}\label{rem.best_complexity}
We make a few remarks about the main theoretical results of the paper (Theorem~\ref{thm:subexp_noise} and Corollary~\ref{cor_main}).
\begin{itemize}[wide]
    \item (Iteration $\varepsilon$-complexity) By Definition~\ref{def.iter_term} (and Remark~\ref{rem.stationary_measure}) and  Corollary~\ref{cor_main}, we conclude that \change{conditioned on 
    event $\mathcal{E}$ (see Assumption~\ref{ass.good_merit_paramter})}, with overwhelmingly high probability, the iteration $\varepsilon$-complexity of Algorithm~\ref{alg.sqp_line_search_practical} to generate a primal-dual iterate $(x_k,y_k)\in\R{n}\times\R{m}$ that satisfies $\max\{\|\nabla f_k + J_k^Ty_k\|,\sqrt{\|c_k\|}\} \leq \varepsilon$  is $\mathcal{O}(\varepsilon^{-2})$. This iteration $\varepsilon$-complexity is of the same order in terms of the dependence on $\varepsilon$ as the iteration complexity that can be derived for the deterministic counterpart (e.g., ~\cite{CurtONeiRobi21}), with the additional restriction that  $\varepsilon$ is bounded away from zero (Assumption~\ref{ass.main_accuracy}) due to the noise and bias in the oracles (Oracles~\ref{oracle.zero} and \ref{oracle.first}).
    \item (Almost-sure convergence) We note that \change{under 
    event $\mathcal{E}$ occurring (see Assumption~\ref{ass.good_merit_paramter}),} Algorithm~\ref{alg.sqp_line_search_practical} finds an 
    $\varepsilon$-stationary iterate in a finite number of iterations with probability 1, i.e., $\Pmbb[\cap_{k=1}^{\infty}\cup_{t=k}^{\infty} \left(T_{\varepsilon_{\Delta l}} > t\right)\change{|\mathcal{E}}] = 0$. This is a direct consequence of the Borel–Cantelli lemma, since it follows from \eqref{eq.success_events} that the probability of failure events is summable, i.e., $\sum_{t=1}^{\infty} \Pmbb[T_{\varepsilon_{\Delta l}} > t \change{|\mathcal{E}}] = \sum_{t=1}^{\infty} \left(1 - \Pmbb[T_{\varepsilon_{\Delta l}} \leq t \change{|\mathcal{E}}]\right) < \infty$.
    \item (Unconstrained setting) The high probability $\varepsilon$-complexity bound in this paper is a generalization of the unconstrained version (e.g., \cite{jin2022high}). In the unconstrained setting, the parameters reduce to $\sigma=0$, $\zeta_y = 0$, 
    $\omega_1=1$, $\Gamma=0$, $\zeta=1$, $\kappa_H = 1$, $\kappa_l=1$, $\epsilon_{\tau}=0$, and $\bar\tau_k = 1$ for all $k\in\N{}$. While using these values in the result of Corollary~\ref{cor_main} does not exactly recover the result from the unconstrained setting (e.g., \cite{jin2022high}), the order of the results is the same in terms of the dependence on $\varepsilon$. The gap is due to the additional complexity that arises due to the adaptive merit parameter. 
    We emphasize that although there is a constant difference in the results as compared to \cite{jin2022high}, our algorithm recovers the complexity bound of the deterministic variant \cite{CurtONeiRobi21}.
\end{itemize}
\end{remark}

\section{Numerical Results}\label{sec.numerical}
In this section, we present numerical results for our proposed algorithm on standard equality constrained nonlinear optimization problems. The goal of the numerical experiments is to investigate the efficiency and robustness of the \SSSPQ{} algorithm across a diverse set of test problems with different levels of noise in the objective function and gradient evaluations. All experiments were conducted in MATLAB. 
Before we present the numerical results, we describe the test problems, implementation details, and evaluation metrics.

\subsection{Test Problems}
We ran the numerical experiments on a subset of the equality constrained optimization problems from the CUTEst collection \cite{gould2015cutest}. We selected the problems that satisfy the following criteria: $(i)$ the objective function is \emph{not} a constant function, $(ii)$ the total number of variables and constraints are not larger than $10^3$, and $(iii)$ the singular values of Jacobians of the constraints at all iterates in all runs were greater than $10^{-8}$.  This resulted in 35 test problems of various dimensions.

We considered noisy (noisy objective function and gradient evaluations) versions of the 35 CUTEst problems. Specifically, whenever an objective function or objective gradient evaluation was required, approximations, $\bar{f}(x;\xi) = \mathcal{N}\left(f(x),\epsilon_{f,N}^2\right)$ and $\bar{g}(x;\xi^\prime) = \mathcal{N}\left(\nabla f(x),\tfrac{\epsilon_{g,N}^2}{n}I\right)$, respectively, were utilized. We considered 4 different noise levels in the objective function and gradient evaluations, dictated by the constants $\epsilon_{f,N}\in\left\{0,10^{-4},10^{-2},10^{-1}\right\}$ and $\epsilon_{g,N}\in\left\{0,10^{-4},10^{-2},10^{-1}\right\}$, respectively. Each CUTEst problem has a unique initial starting point, which was used as the starting point of all runs of all algorithms. Moreover, for each selected tuple of noise levels $(\epsilon_{f,N},\epsilon_{g,N})\in\left\{0,10^{-4},10^{-2},10^{-1}\right\} \times \left\{10^{-4},10^{-2},10^{-1}\right\} \cup \{0\}\times \{0\}$, where appropriate, we ran each problem with five different random seeds.

\subsection{Implementation Details}
We compared \SSSPQ{} (Algorithm~\ref{alg.sqp_line_search_practical}) to the adaptive stochastic SQP algorithm proposed in \cite{BeraCurtRobiZhou21} (which we call \ASSPQ{}) on the previously described noisy CUTEst problems. 
We set user-defined parameters for \SSSPQ{} as follows: $\epsilon_f = \epsilon_{f,N},\epsilon_g = \epsilon_{g,N}$, $\epsilon_{\tau} = 10^{-2}$, $\bar\tau_{-1} = \sigma = 0.1$, $\gamma = 0.5$, $\theta = 10^{-4}$, $\alpha_0 = \alpha_{\max} = 1$, and $H_k = I$ for all $k\in\mathbb{N}$. For \ASSPQ{} \cite{BeraCurtRobiZhou21} we set the parameters as follows (this parameter selection was guided by the choice of parameters in~\cite{BeraCurtRobiZhou21}): $\bar\tau_{-1} = \sigma = 0.1$, $\bar\xi_{-1} = 1$, $\epsilon = 10^{-2}$, $\theta = 10^4$, $H_k = I$ and $\beta_k = 1$ for all $k\in\mathbb{N}$. The \ASSPQ{} step size rule  requires knowledge (or estimates) of the Lipschitz constants $L$ and $\Gamma$. To this end, we estimated these constants using gradient differences near the initial point, and set $L_k = L$ and $\Gamma_k = \Gamma$ for all $k\in\mathbb{N}$. 
We note that while the analysis of the  \SSSPQ{} algorithm  requires that the condition of Oracles~\ref{oracle.first} hold, such conditions are not enforced or checked, 
and rather in each experiment, the algorithms were given random gradient estimates with the same, fixed, pre-specified accuracy (as described above). 
That being said, \SSSPQ{} and \ASSPQ{} differ in that the former requires estimates of the objective function 
whereas the latter  does not (\ASSPQ{} is an objective-function-free method). Specifically,  \SSSPQ{} requires 2 function evaluations and 1 gradient evaluation per iteration and \ASSPQ{} only requires a single gradient evaluation per iteration. We discuss this further when presenting the numerical results.

\subsection{Termination Conditions and Evaluation Metrics}
In all of our experiments, results are given in terms of infeasibility ($\|c(x_k)\|_{\infty}$) and stationarity (KKT) ($\max\{\|c(x_k)\|_{\infty}, \min_{y\in\R{m}}\|\nabla f(x_k) + \nabla c(x_k)y\|_{\infty}\}$) with respect to different evaluation metrics (iterations and work). All algorithms were run with a budget of  $10^3$ iterations, and only terminated a run early if an approximate stationary point was found, which is defined as $x_*\in\R{n}$ such that $\|c(x_*)\|_{\infty} \leq 10^{-6}$ and $\min_{y\in\R{m}}\|\nabla f(x_*) + \nabla c(x_*)y\|_{\infty} \leq 10^{-4}$. 

We present results in the form of performance profiles with respect to iterations and work (defined as the total number of function and gradient evaluations, or equivalently, the total number of probabilitic oracle calls). At every iteration, \SSSPQ{} requires both Oracles~\ref{oracle.zero} and~\ref{oracle.first} while \ASSPQ{} only requires Oracle~\ref{oracle.first}, which means \SSSPQ{} is more expensive per iteration. Moreover, we 
use the convergence metric as described in~\cite{more2009benchmarking}, i.e., 
$m(x_0) - m(x) \geq (1-\epsilon_{pp})(m(x_0)-m_b)$, 
where $m(x)$ is either $\|c(x)\|_{\infty}$ (infeasibility) or $\max\{\|c(x)\|_{\infty}, \min_{y\in\R{m}}\|\nabla f(x) + \nabla c(x)y\|_{\infty}\}$ (stationarity (KKT)), $x_0$ is the initial iterate, and $m_b$ is the best value of the metric found by any algorithm for a given problem instance within the budget, and $\epsilon_{pp} \in (0,1)$ is the tolerance. For all experiments presented, we chose $\epsilon_{pp} = 10^{-3}$.

\subsection{Noisy Gradients, Exact Functions ($\epsilon_f = 0$)} In our first set of experiments, we consider problems with exact objective function evaluations and noisy objective gradient evaluations and compare \SSSPQ{} and \ASSPQ{}. The goal of this experiment is to show the effect of noise in the gradient and the advantages of using (exact) function values. Each row in Figure~\ref{fig.1} shows performance profiles for a different noise level in the gradient (bottom row, highest noise level) and each column shows a different evaluation metric. Starting from the noise-less benchmark case ($\epsilon_f = 0$ and $\epsilon_g = 0$, the first row of Figure~\ref{fig.1}), it is clear that the performance of the methods in both infeasibility error and KKT error is similar with a slight advantage in effectiveness (total problems that can be solved) for \SSSPQ{} in terms of KKT error. As the noise in the gradient is increased, the gap between the performance of the two methods (in terms of all metrics) increases favoring \SSSPQ{}. This is not surprising as \SSSPQ{} uses additional information (exact function values). These results highlight the effect reliable function information can have on the performance of the methods.

\begin{figure}[ht]
   \centering
  \includegraphics[width=0.24\textwidth,clip=true,trim=30 180 50 180]{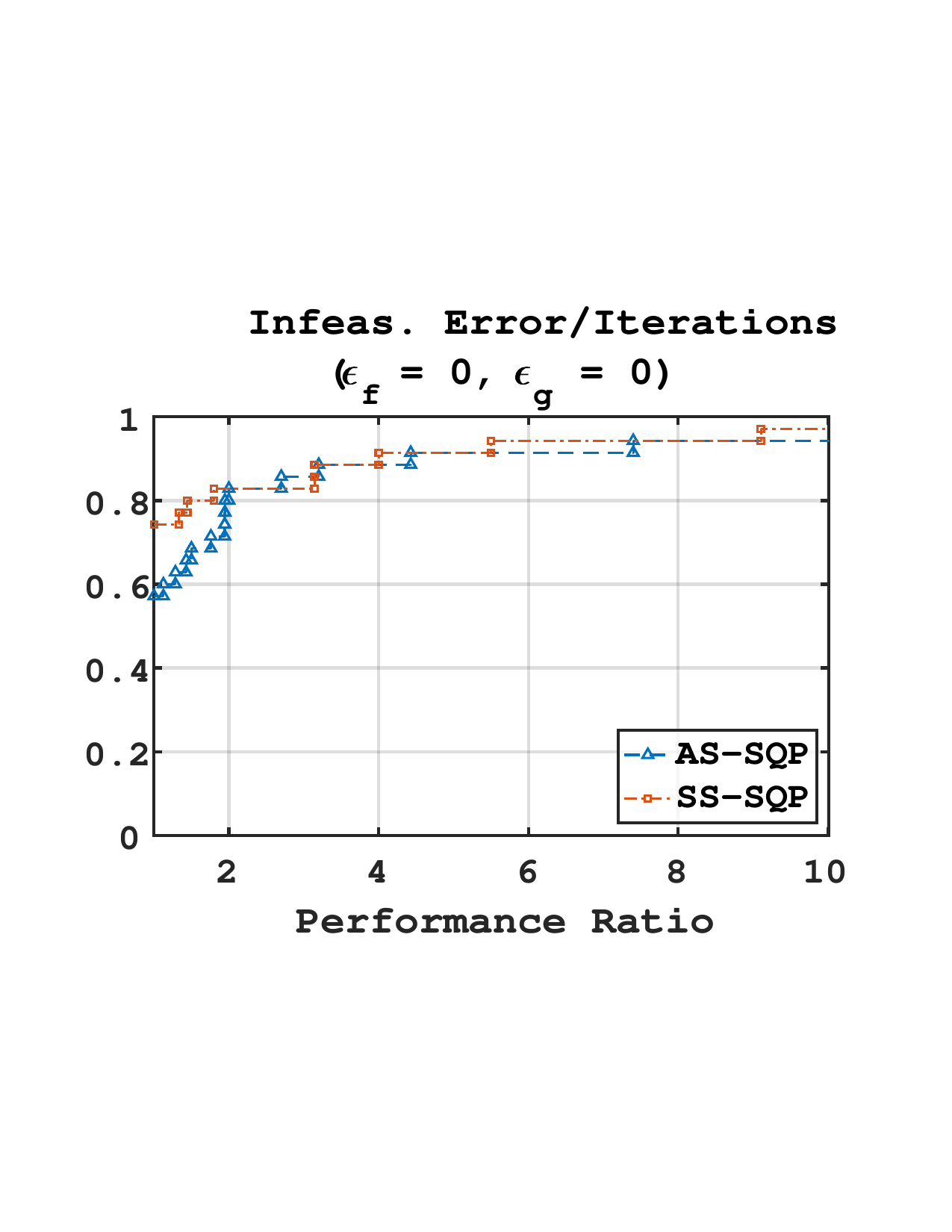}
  \includegraphics[width=0.24\textwidth,clip=true,trim=30 180 50 180]{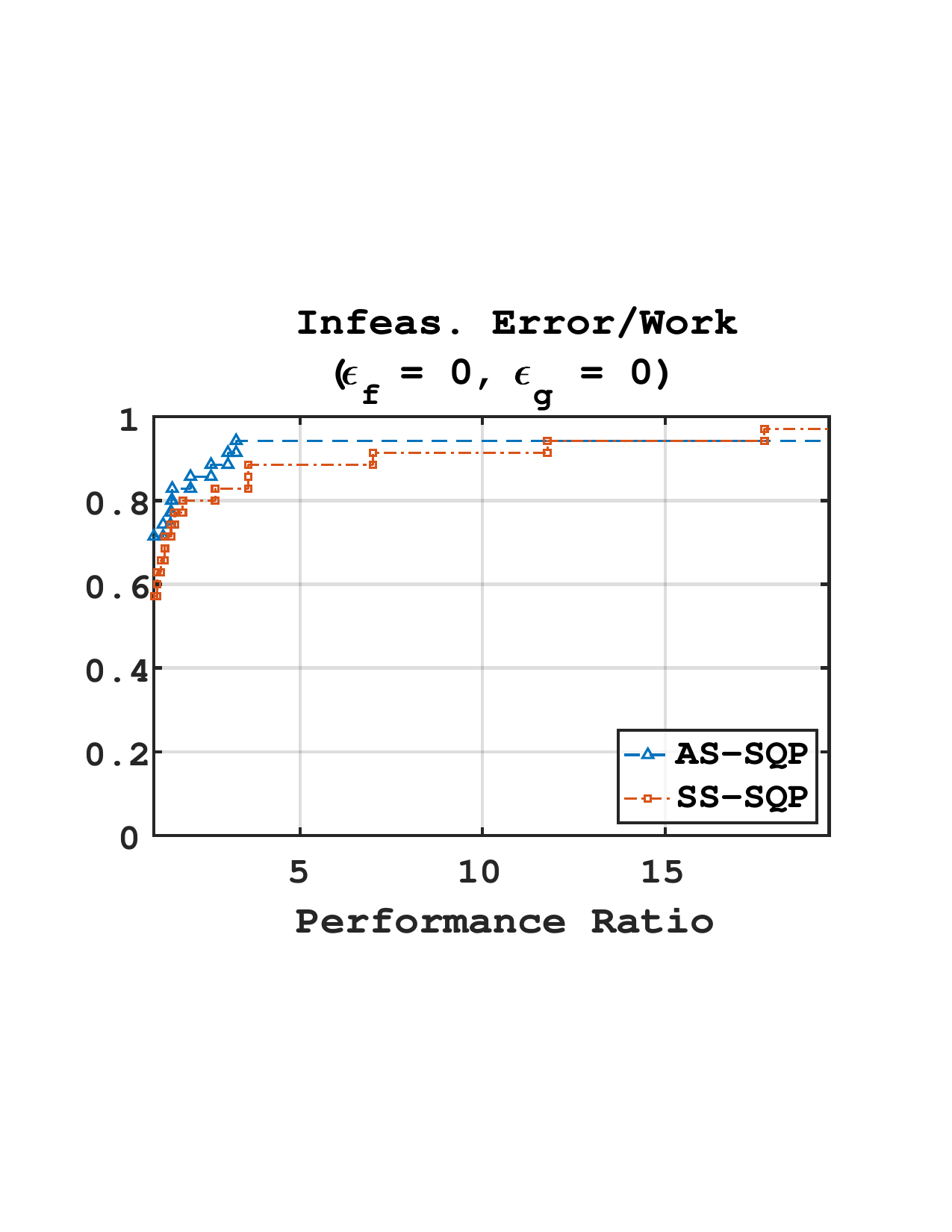}
  \includegraphics[width=0.24\textwidth,clip=true,trim=30 180 50 180]{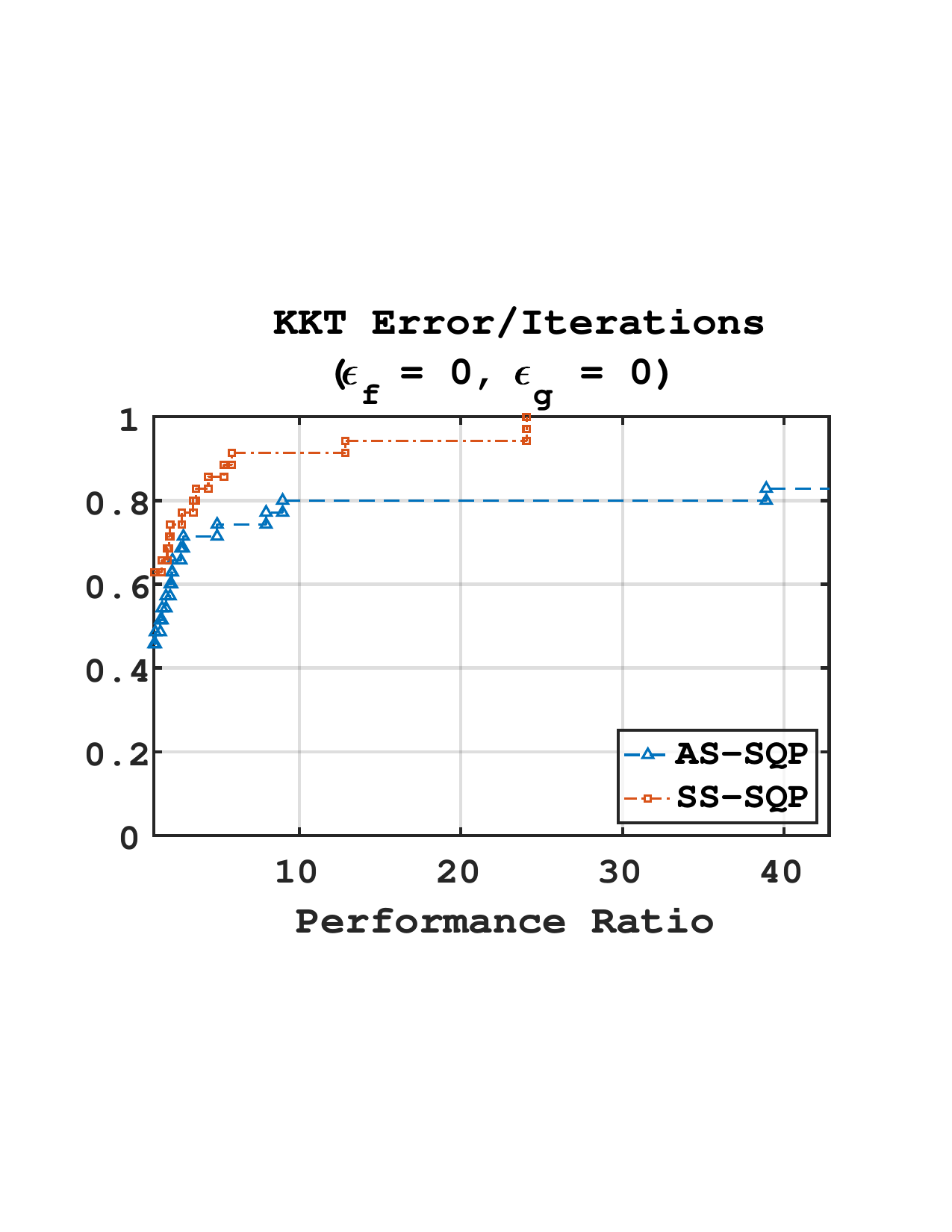}
  \includegraphics[width=0.24\textwidth,clip=true,trim=30 180 50 180]{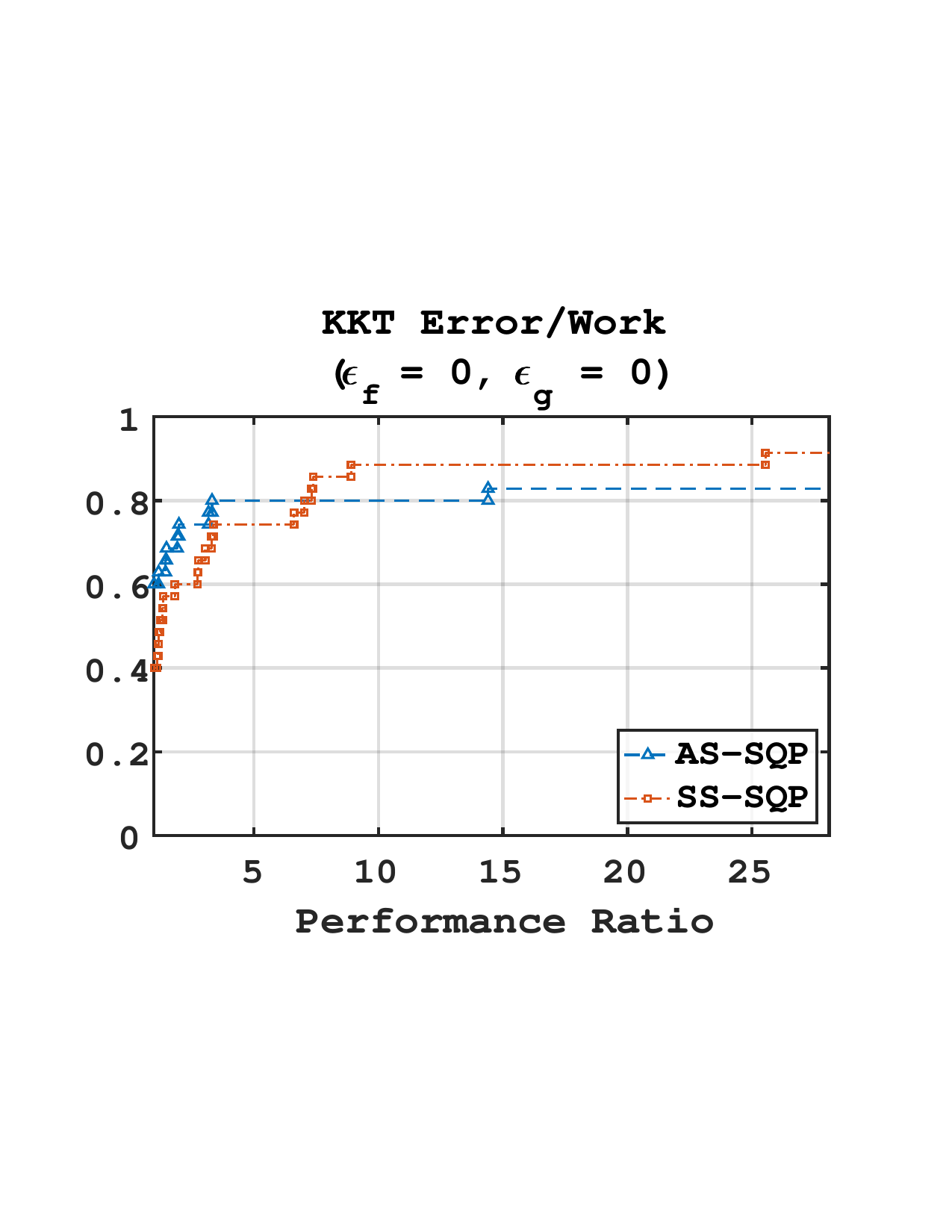}
  
  \includegraphics[width=0.24\textwidth,clip=true,trim=30 180 50 180]{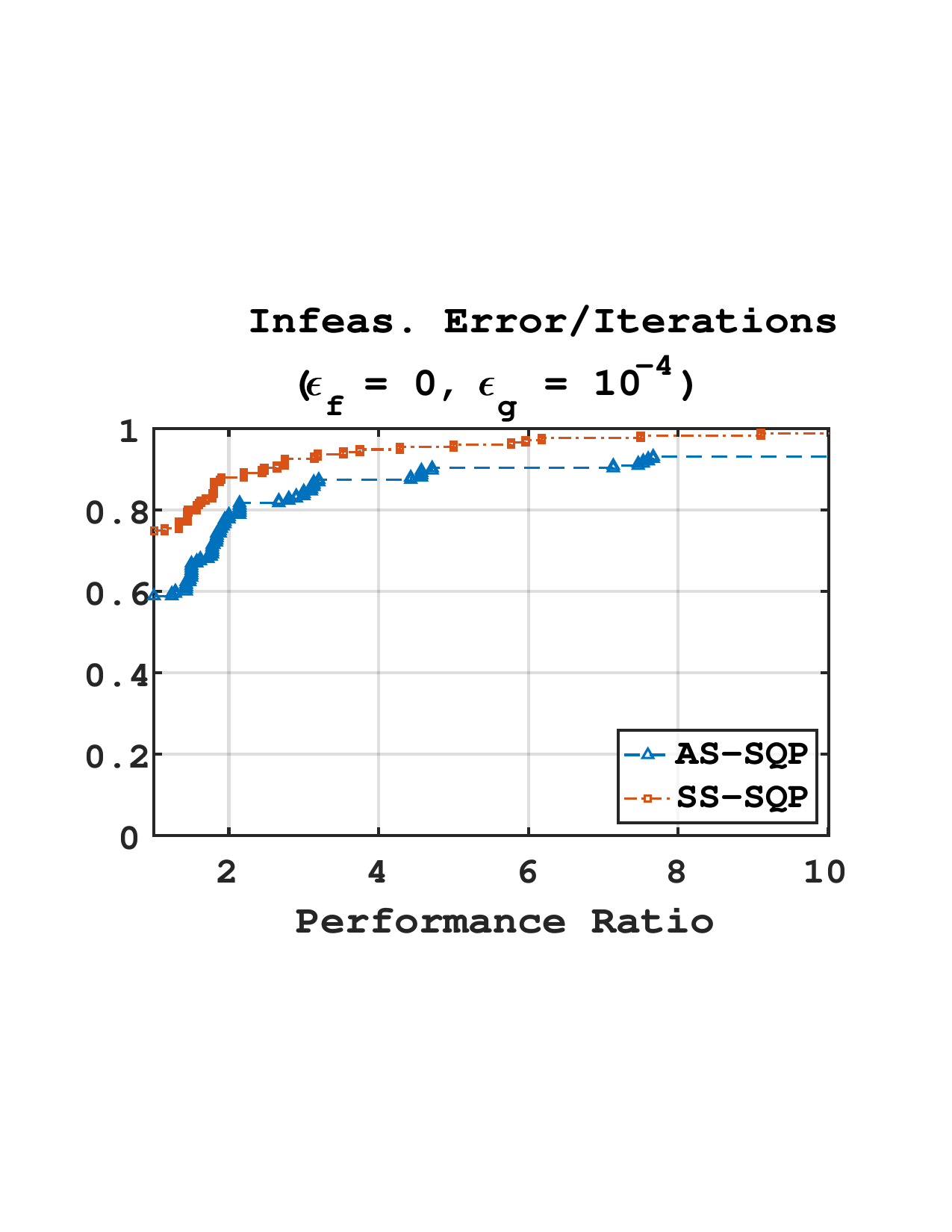}
  \includegraphics[width=0.24\textwidth,clip=true,trim=30 180 50 180]{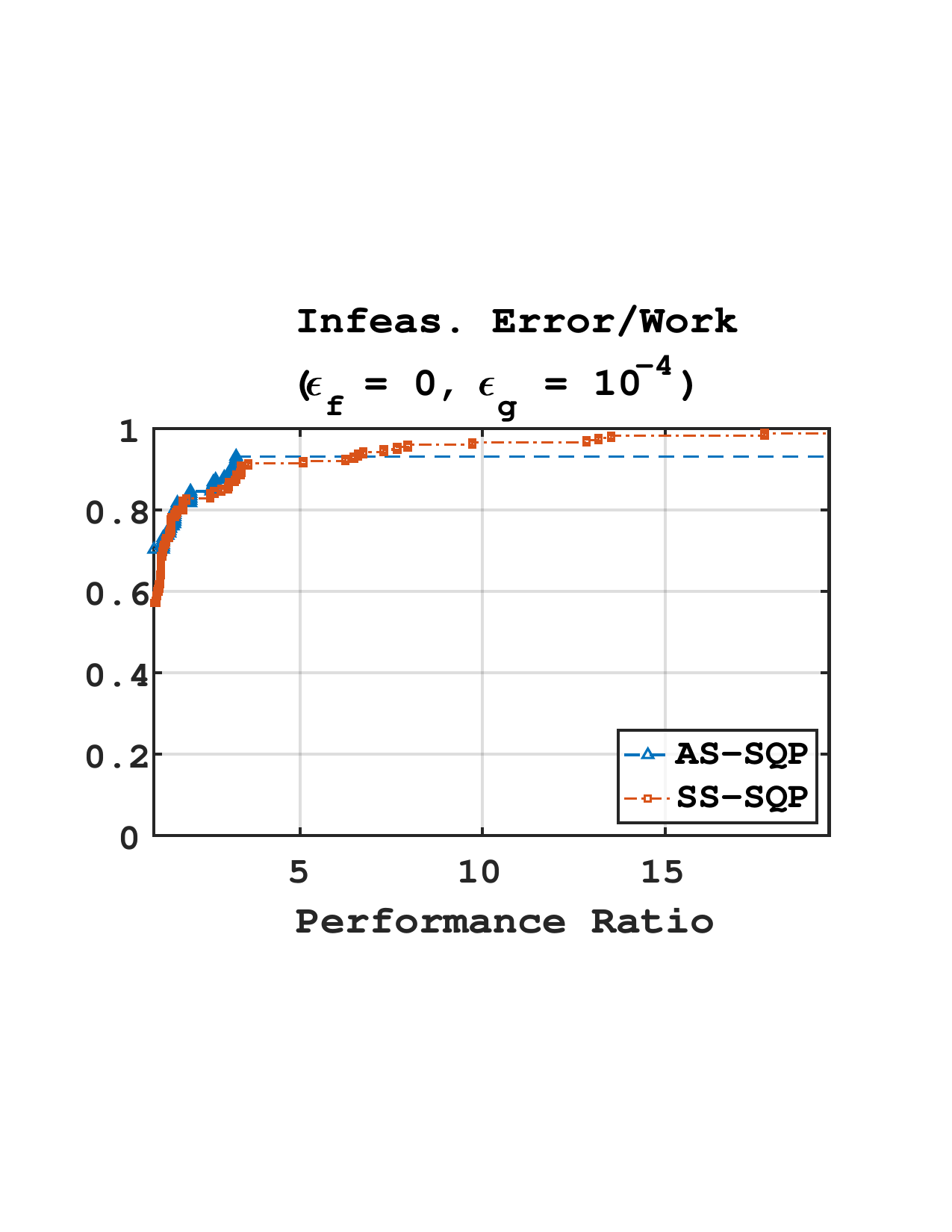}
  \includegraphics[width=0.24\textwidth,clip=true,trim=30 180 50 180]{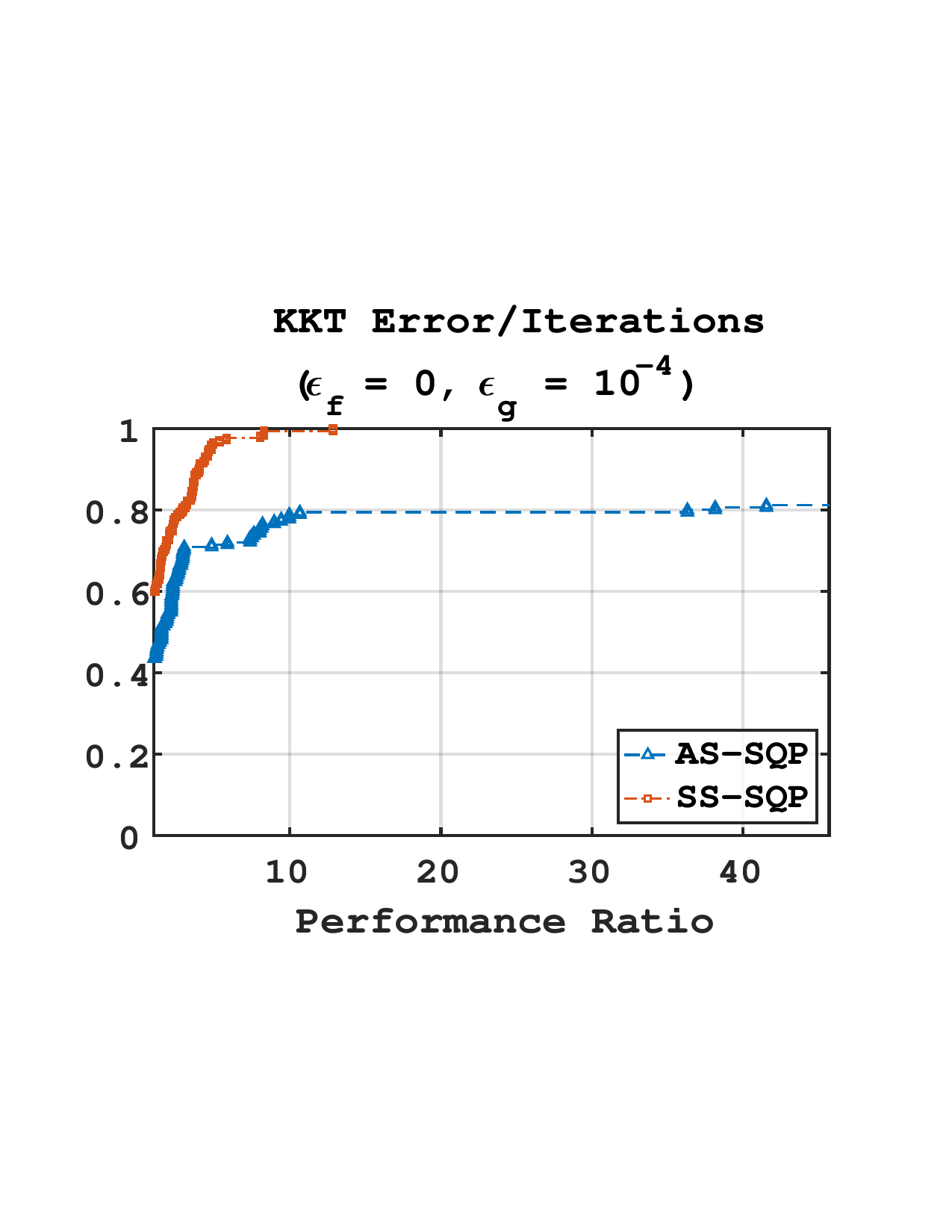}
  \includegraphics[width=0.24\textwidth,clip=true,trim=30 180 50 180]{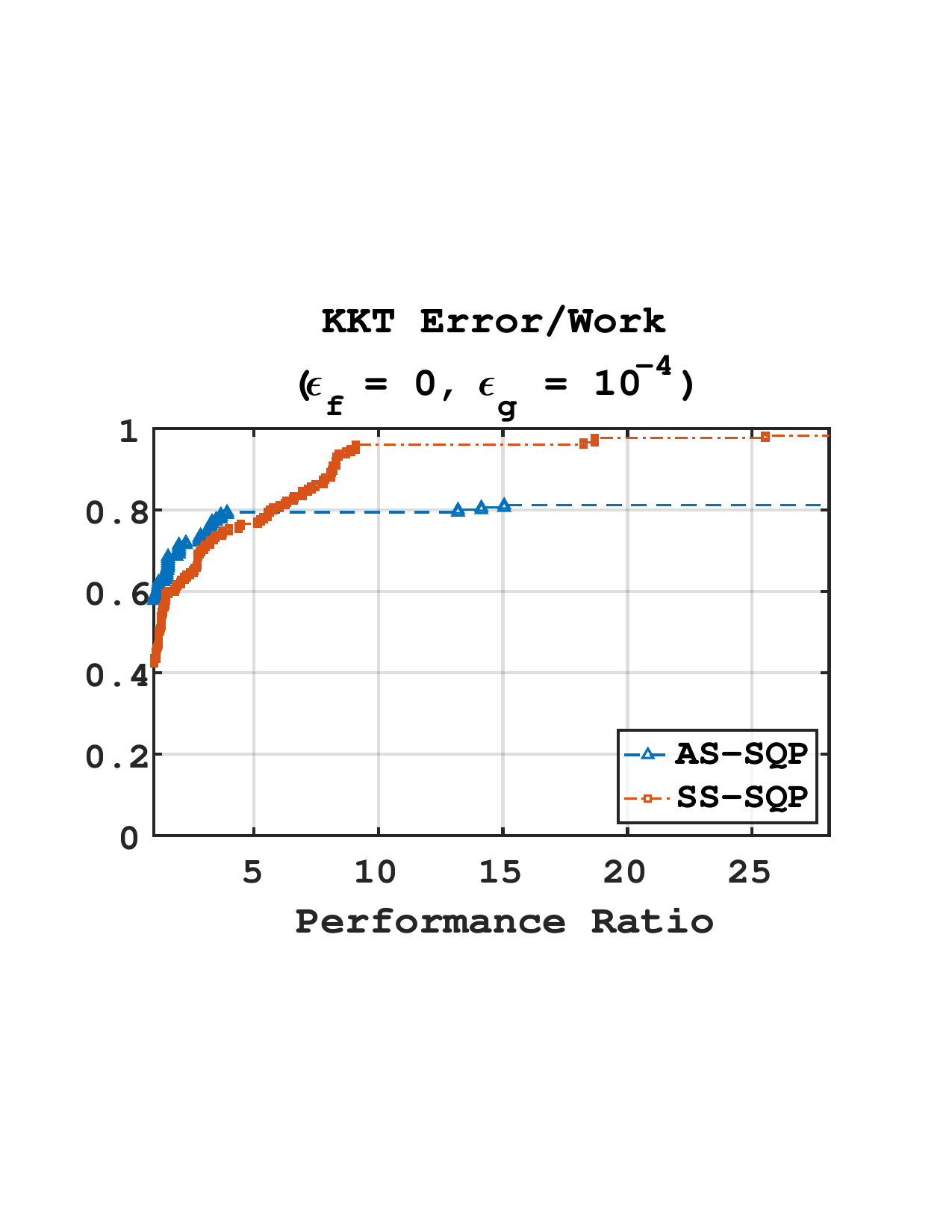}
  
  \includegraphics[width=0.24\textwidth,clip=true,trim=30 180 50 180]{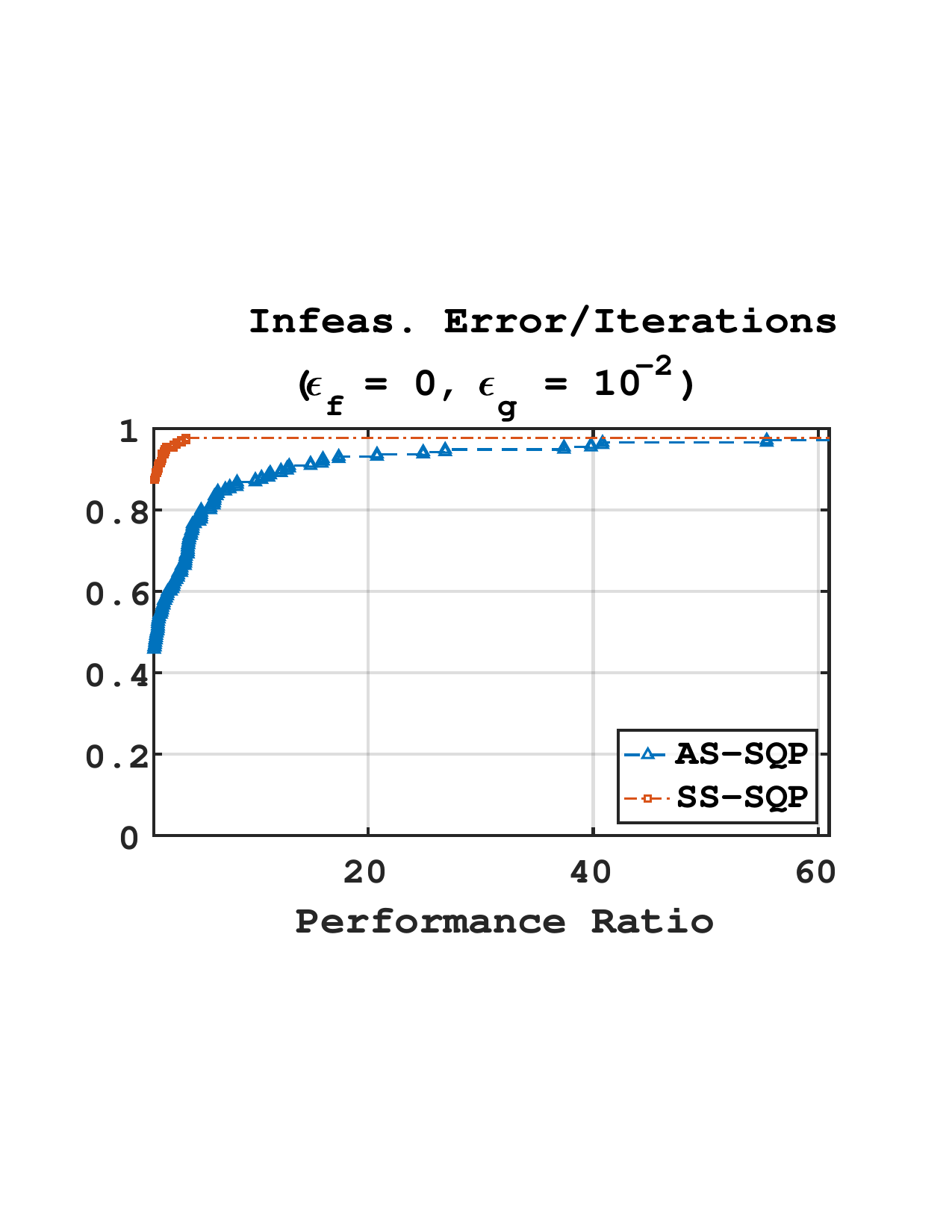}
  \includegraphics[width=0.24\textwidth,clip=true,trim=30 180 50 180]{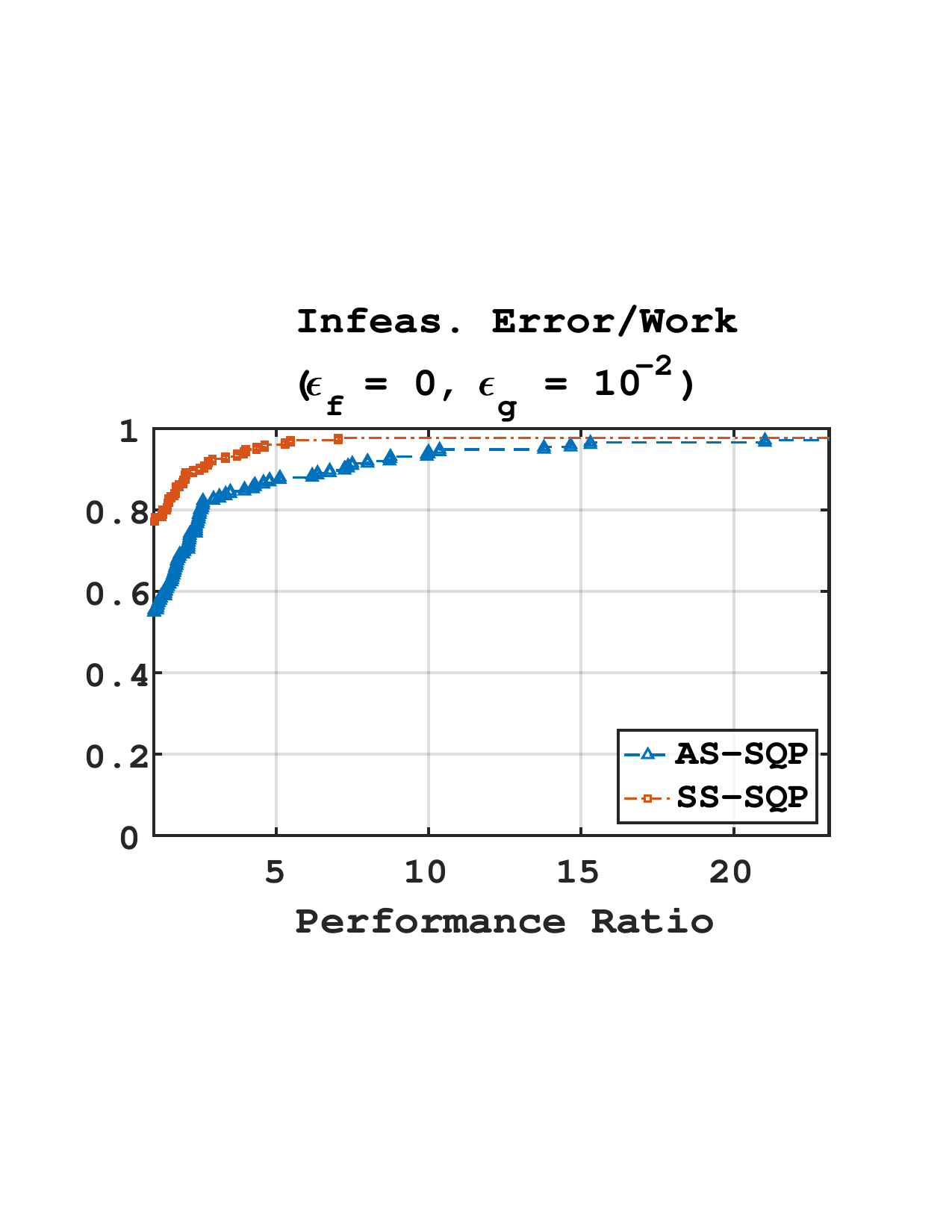}
  \includegraphics[width=0.24\textwidth,clip=true,trim=30 180 50 180]{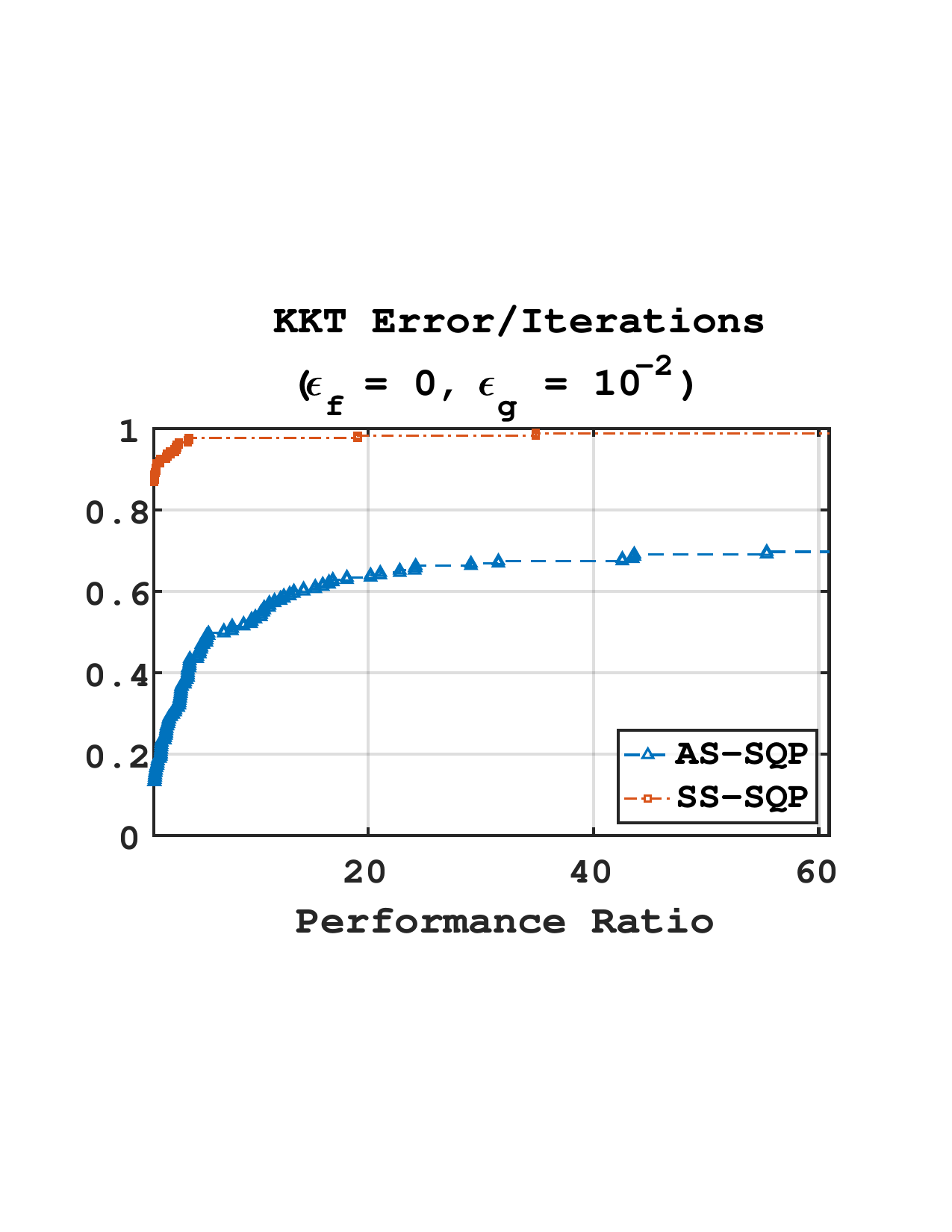}
  \includegraphics[width=0.24\textwidth,clip=true,trim=30 180 50 180]{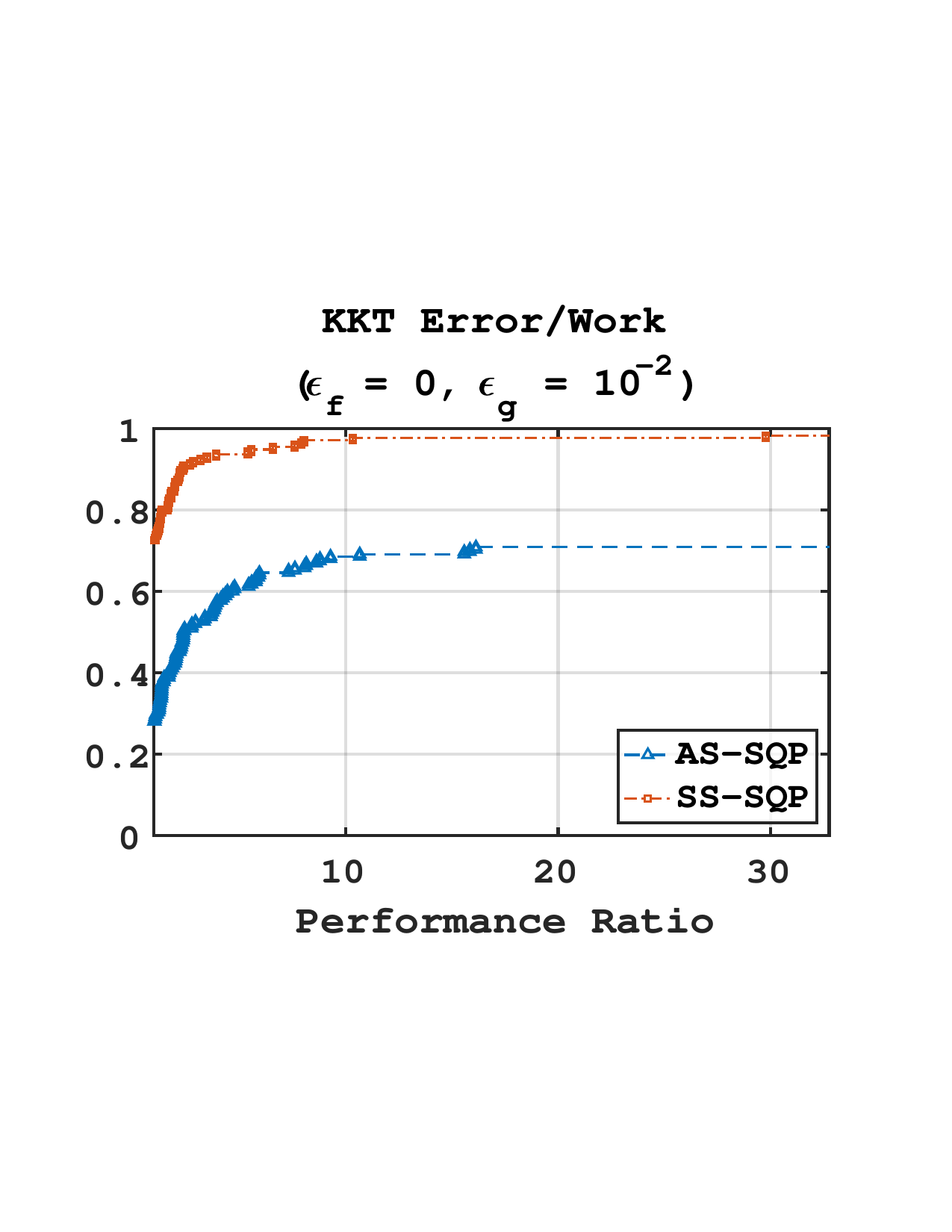}
  
  \includegraphics[width=0.24\textwidth,clip=true,trim=30 180 50 180]{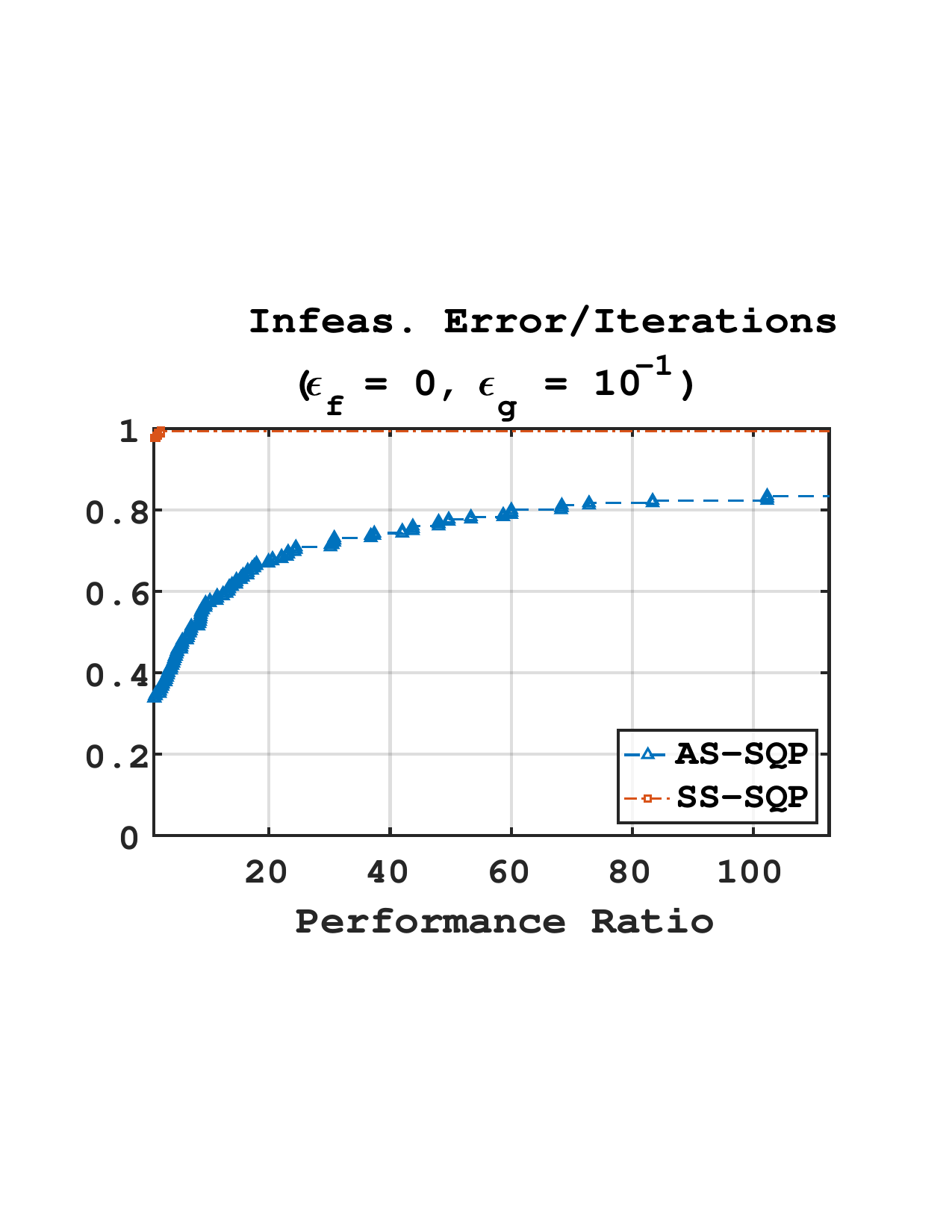}
  \includegraphics[width=0.24\textwidth,clip=true,trim=30 180 50 180]{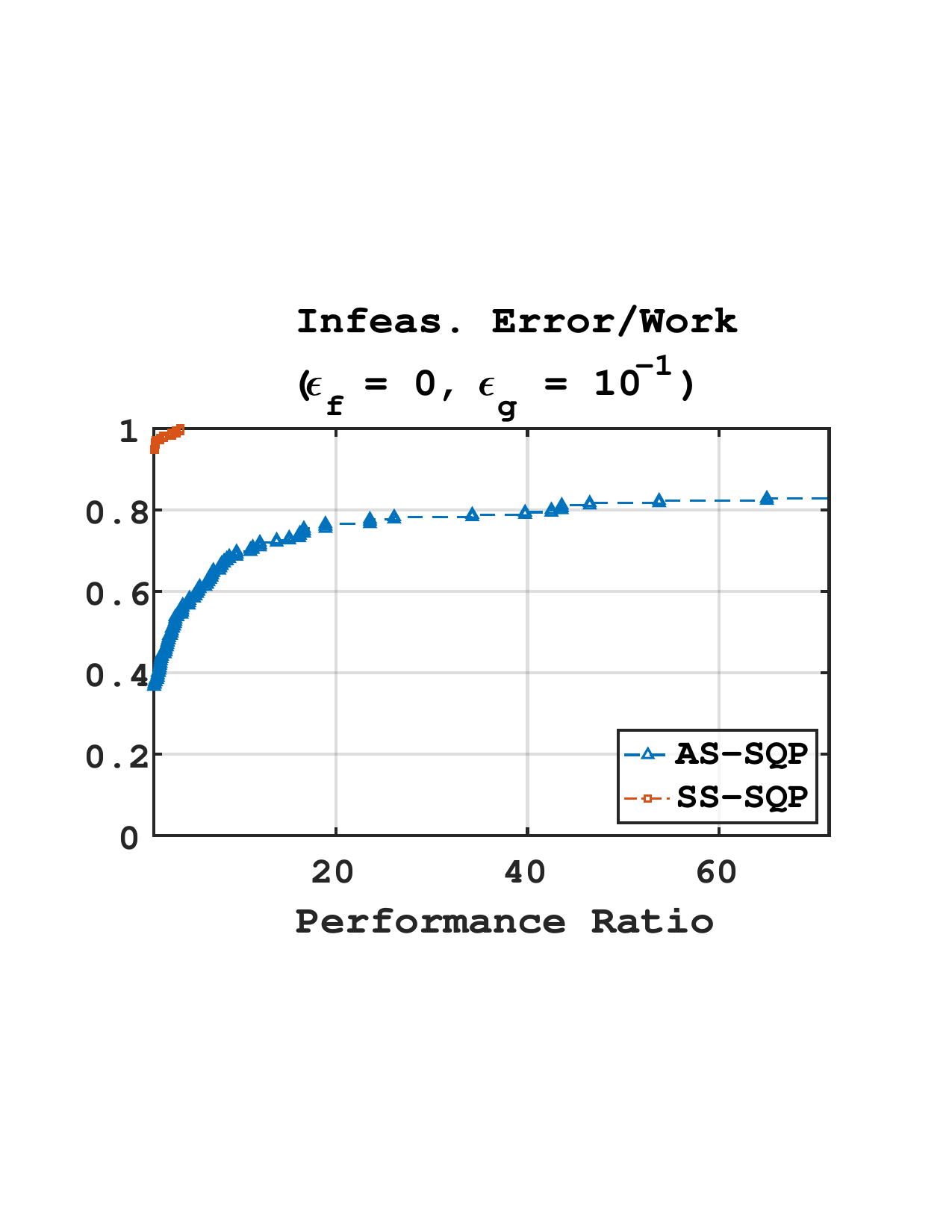}
  \includegraphics[width=0.24\textwidth,clip=true,trim=30 180 50 180]{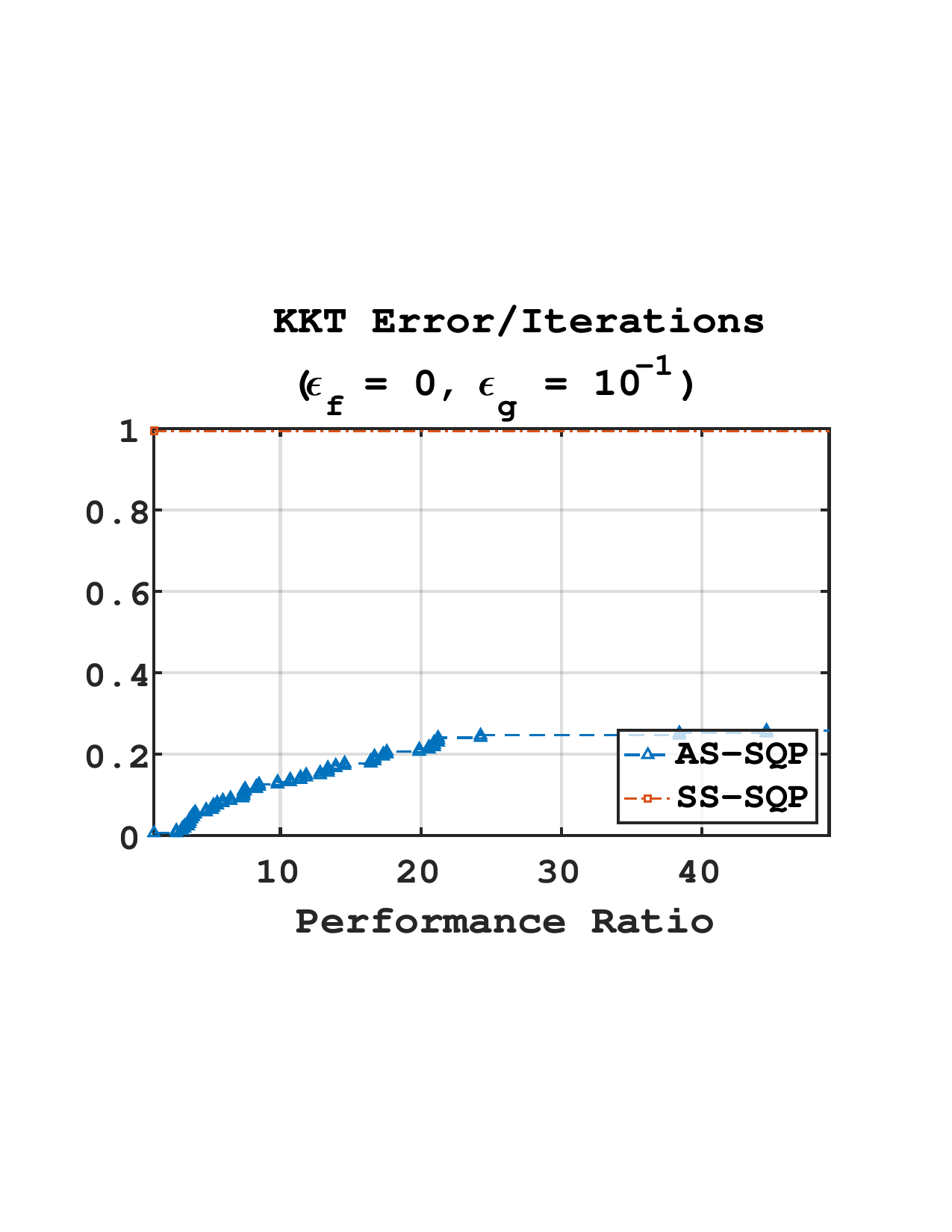}
  \includegraphics[width=0.24\textwidth,clip=true,trim=30 180 50 180]{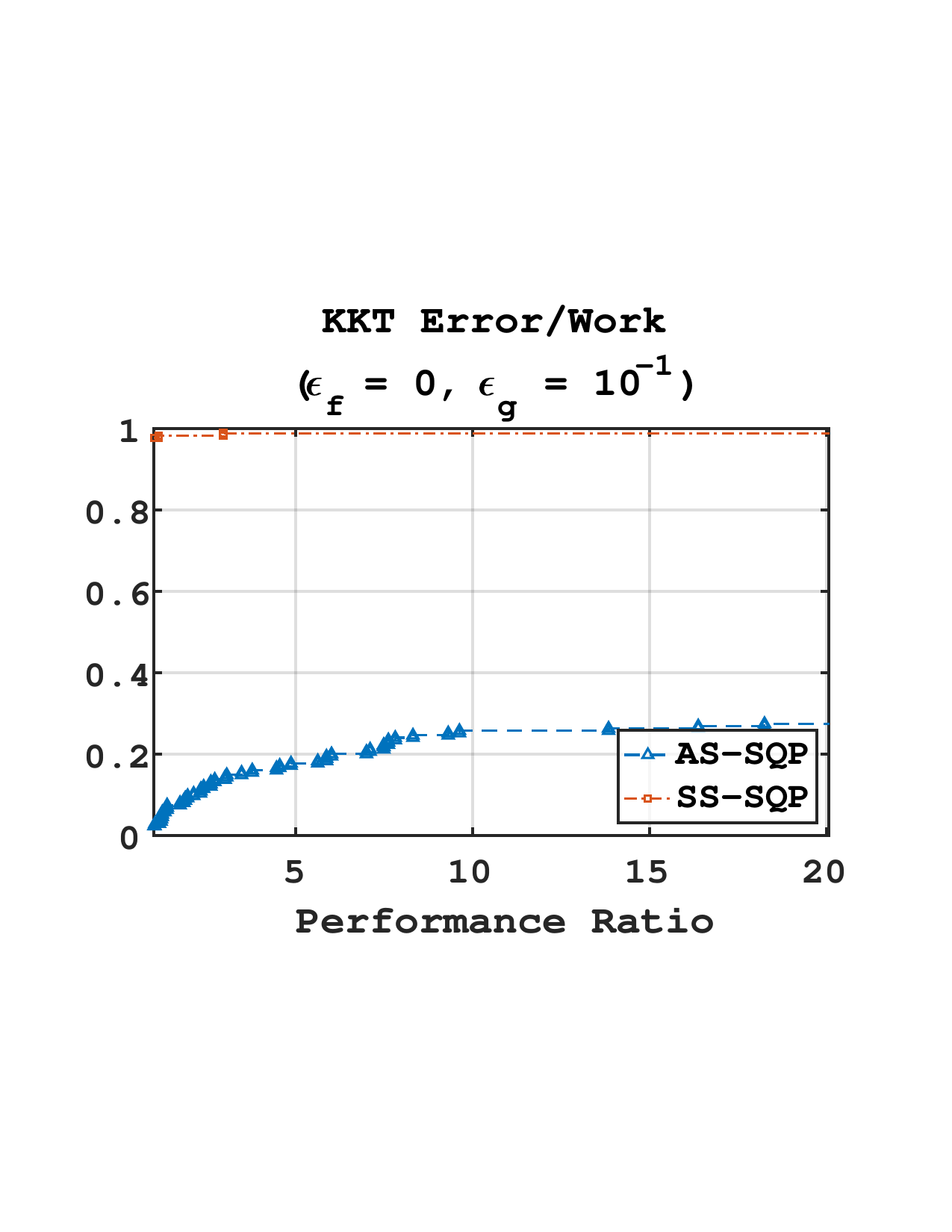}
  \caption{Performance profiles for \ASSPQ{} and \SSSPQ{} on CUTEst collection~\cite{gould2015cutest} with deterministic objective function evaluations ($\epsilon_f = 0$) and noisy objective gradient evaluations. Each column corresponds to a different evaluation metric (infeasibility and KKT errors vs. iterations and work). The noise in the objective gradient evaluations $\epsilon_g$ increases from top to bottom (First row: $\epsilon_g = 0$; Second row: $\epsilon_g = 10^{-4}$; Third row: $\epsilon_g = 10^{-2}$; Fourth row: $\epsilon_g = 10^{-1}$). \label{fig.1}}
\end{figure}

\subsection{Noisy Functions and Gradients}\label{sec.noisy_noisy}
Here we present results with noise in both the objective function and gradient evaluations. As in Figure~\ref{fig.1}, in Figure~\ref{fig.2} different rows show results for different noise levels in the gradient (the bottom row has the highest noise) and different columns show results for different evaluation metrics. Each performance profile has 4 lines: the \ASSPQ{} (that is objective-function-free and is not affected by the noise in the function evaluations) and three variants of the \SSSPQ{} method with different levels of noise in the objective function evaluations. One can make the following observations. First, not surprisingly, the performance of the \SSSPQ{} method degrades as the noise in the objective function evaluations increases. Second, \ASSPQ{} and \SSSPQ{} are competitive and achieve similar robustness levels with respect to infeasibility errors.  
Third, and most interestingly, the performance of the methods depends on the  relative errors of the function and gradient evaluations. In particular, when the objective function noise level is sufficiently small compared to the objective gradient bias, \SSSPQ{} performs better. On the other hand, when the function estimations are too noisy compared to the noise level in the gradient evaluations, \ASSPQ{} performs slightly better.  
These results highlight the power of objective-function-free optimization methods in the presence of noise (especially high noise in the objective function evaluations) and the value of quality (or at least relative quality) function evaluations in methods that require zeroth-order information.

We conclude this section by making a few remarks about the behavior of the merit parameter. In the noise-free setting ($\epsilon_f = \epsilon_g = 0$), over all problems, the minimum merit parameter value was (of the order of) $10^{-4}$. In the noisy setting, over all 2100 problem instances (35 problems, 12 noise levels, 5 replications), the minimum value for the merit parameter was (of the order of) $10^{-6}$ (this value was attained for a problem with the highest noise level), and less than 5\% of the final merit parameter values were less that (of the order of) $10^{-4}$. Moreover, across different realizations of the same problem with the same noise levels, the variance in the final merit parameter value was small (and dependent on the noise levels).

\begin{figure}[ht]
   \centering
  \includegraphics[width=0.24\textwidth,clip=true,trim=30 180 50 180]{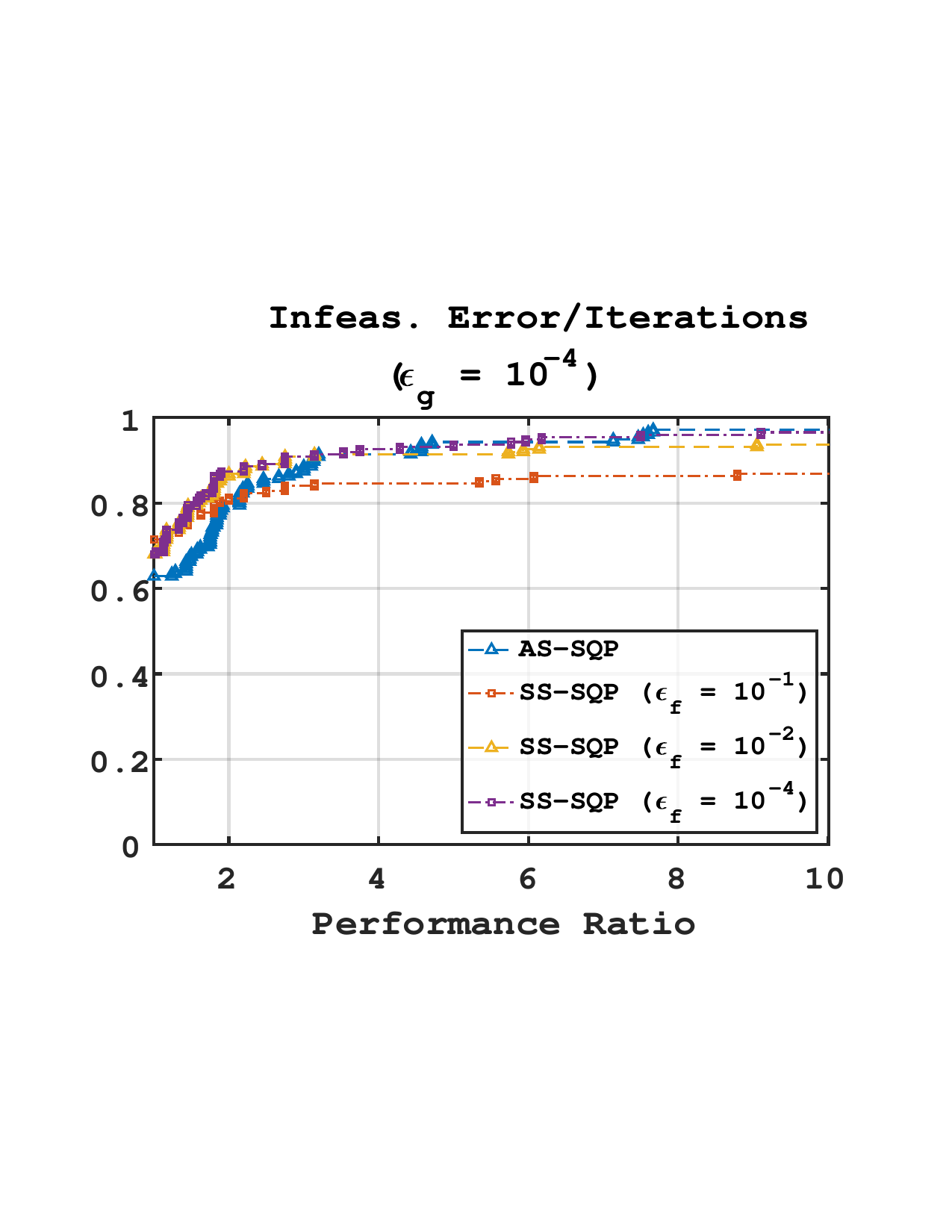}
  \includegraphics[width=0.24\textwidth,clip=true,trim=30 180 50 180]{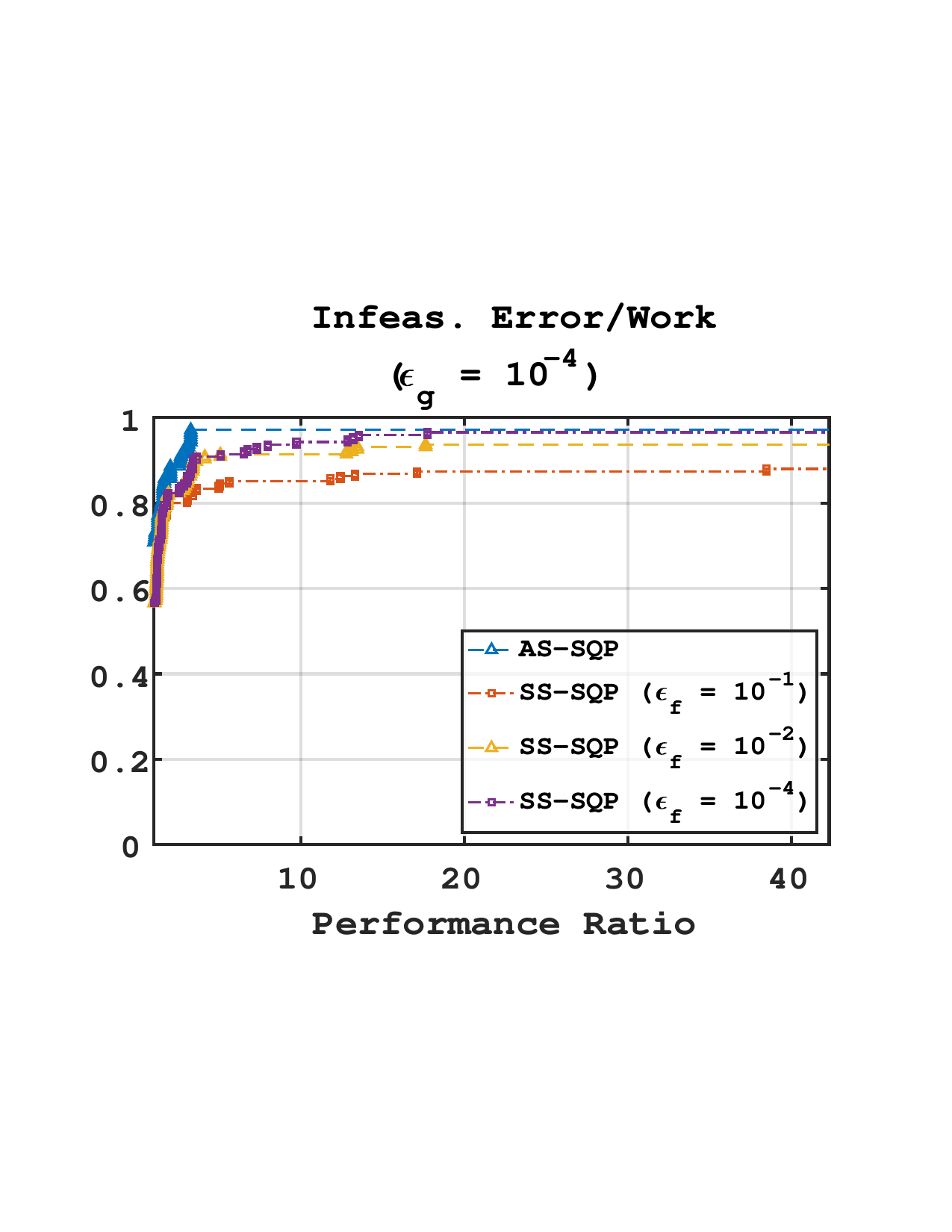}
  \includegraphics[width=0.24\textwidth,clip=true,trim=30 180 50 180]{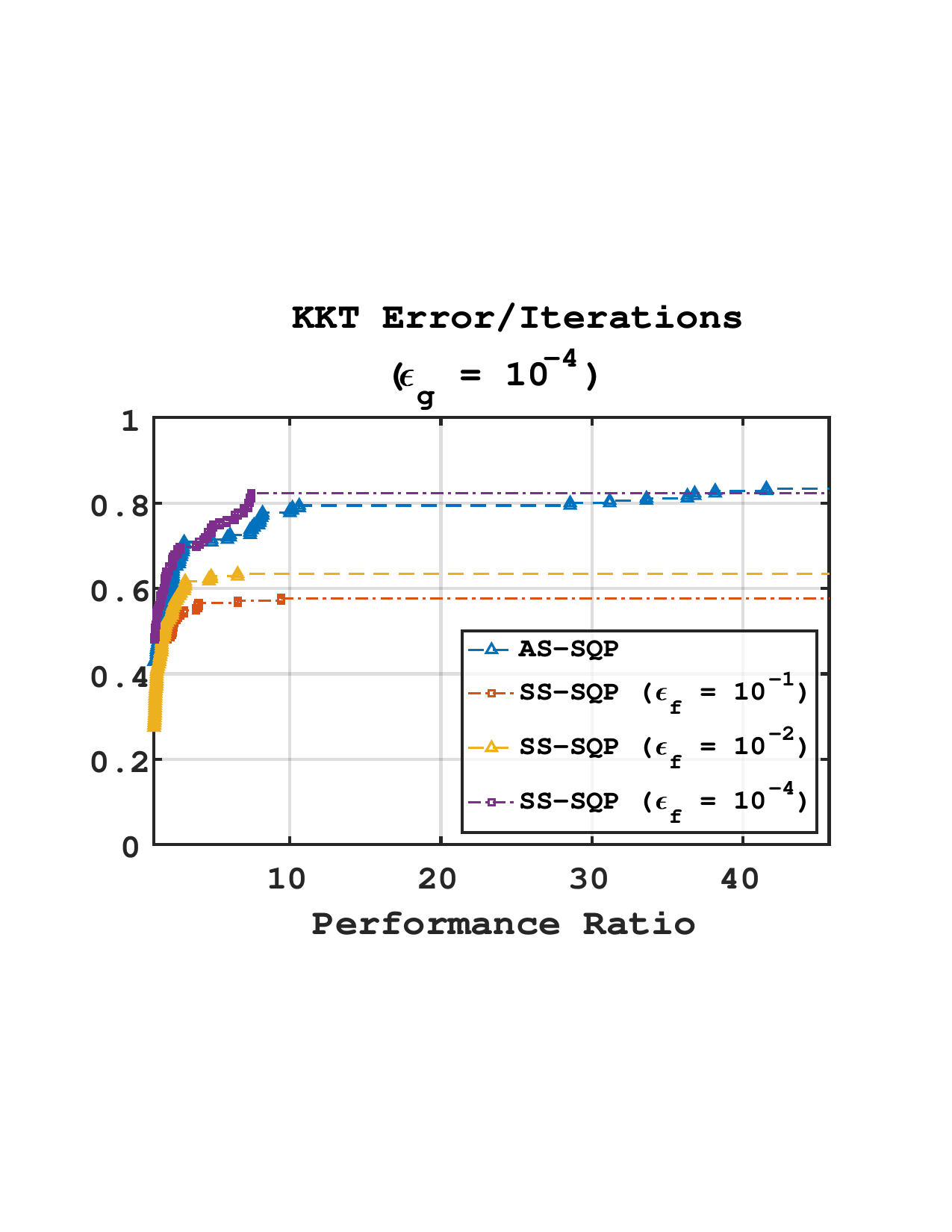}
  \includegraphics[width=0.24\textwidth,clip=true,trim=30 180 50 180]{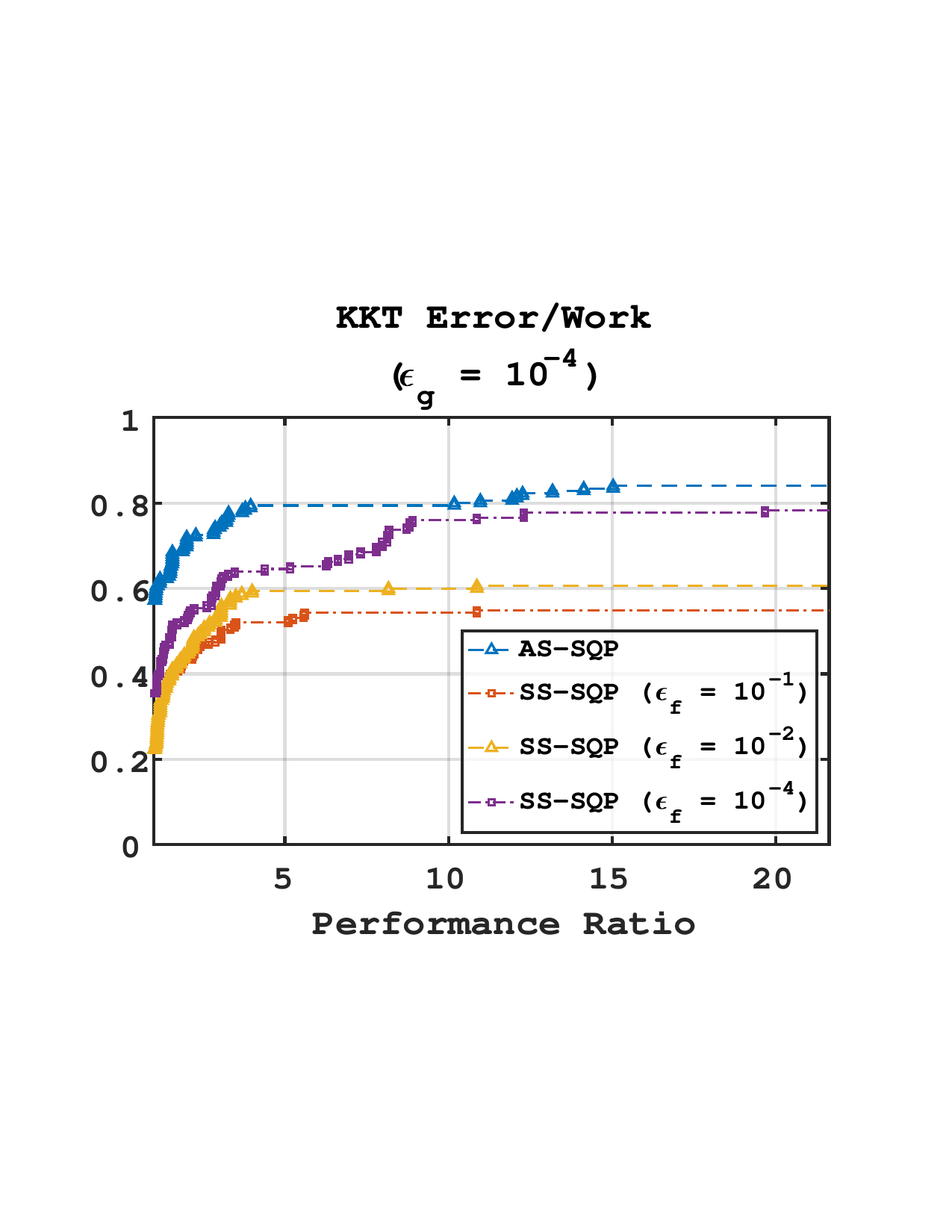}
 
 \includegraphics[width=0.24\textwidth,clip=true,trim=30 180 50 180]{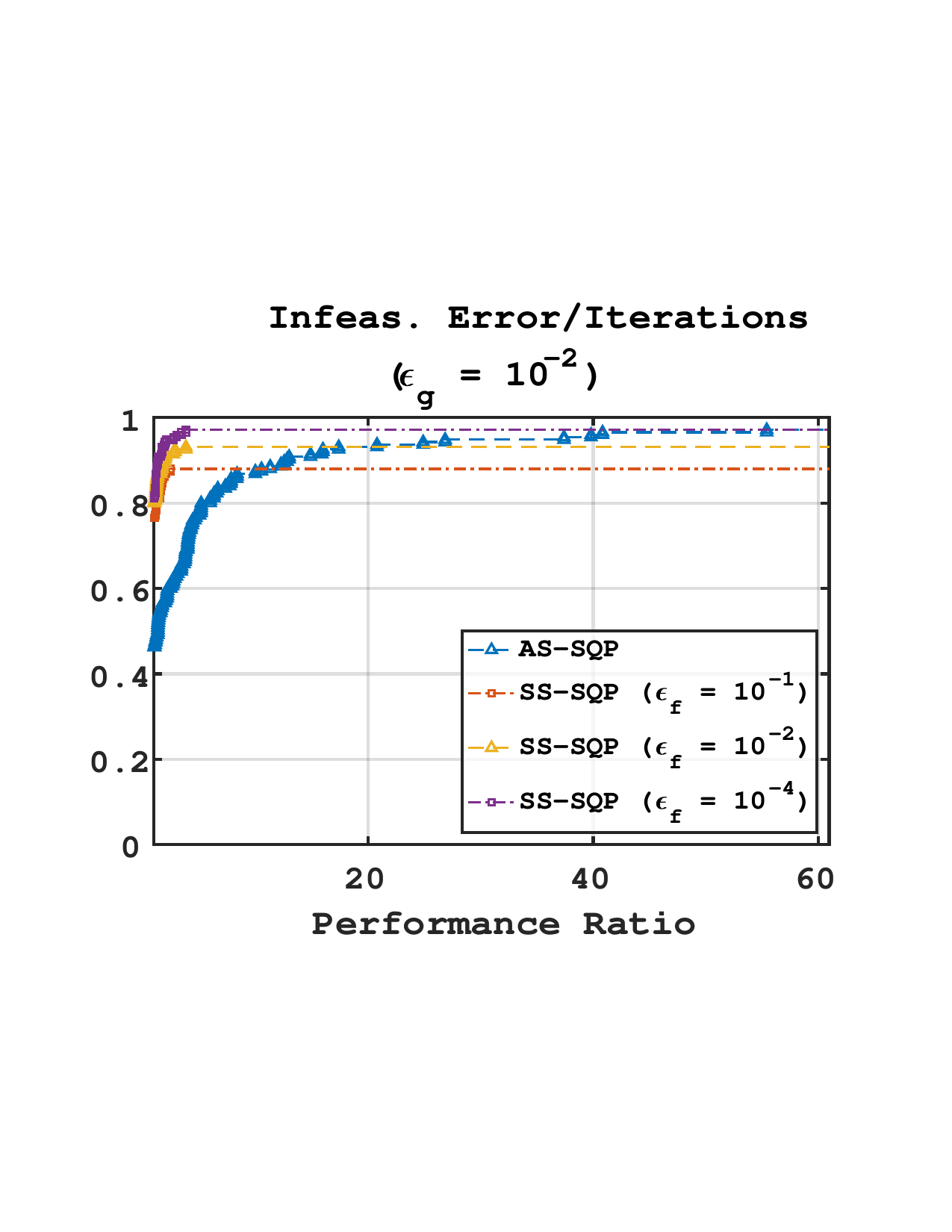}
  \includegraphics[width=0.24\textwidth,clip=true,trim=30 180 50 180]{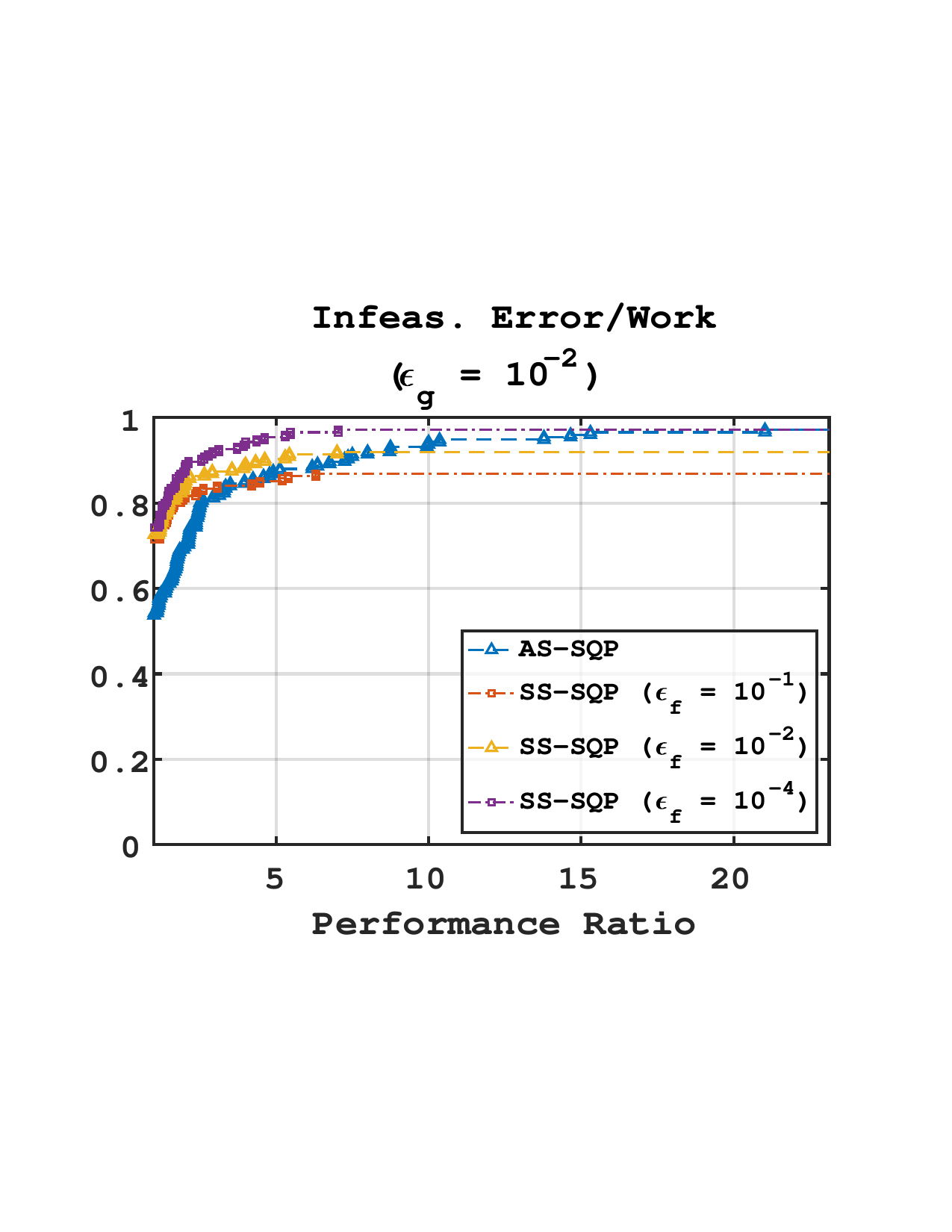}
  \includegraphics[width=0.24\textwidth,clip=true,trim=30 180 50 180]{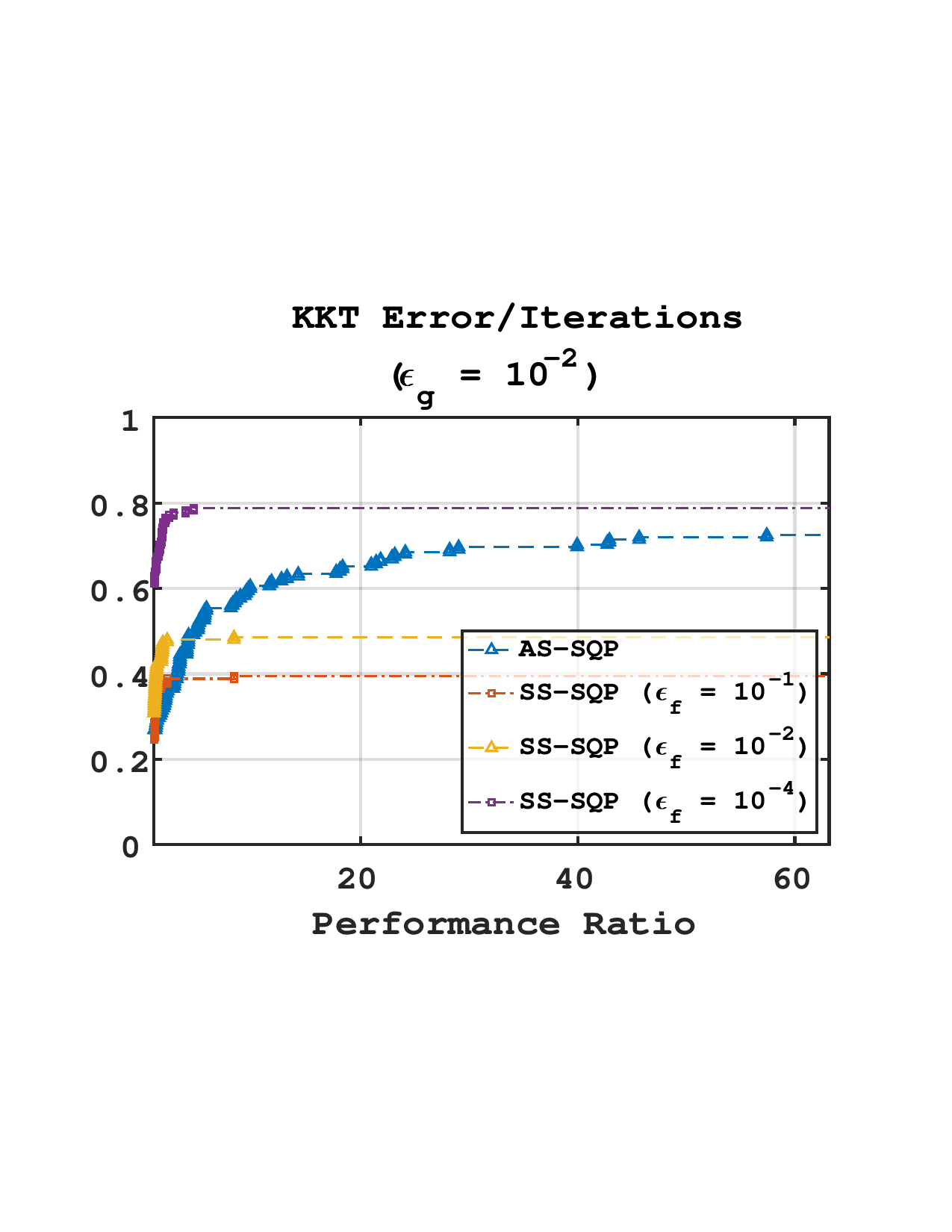}
  \includegraphics[width=0.24\textwidth,clip=true,trim=30 180 50 180]{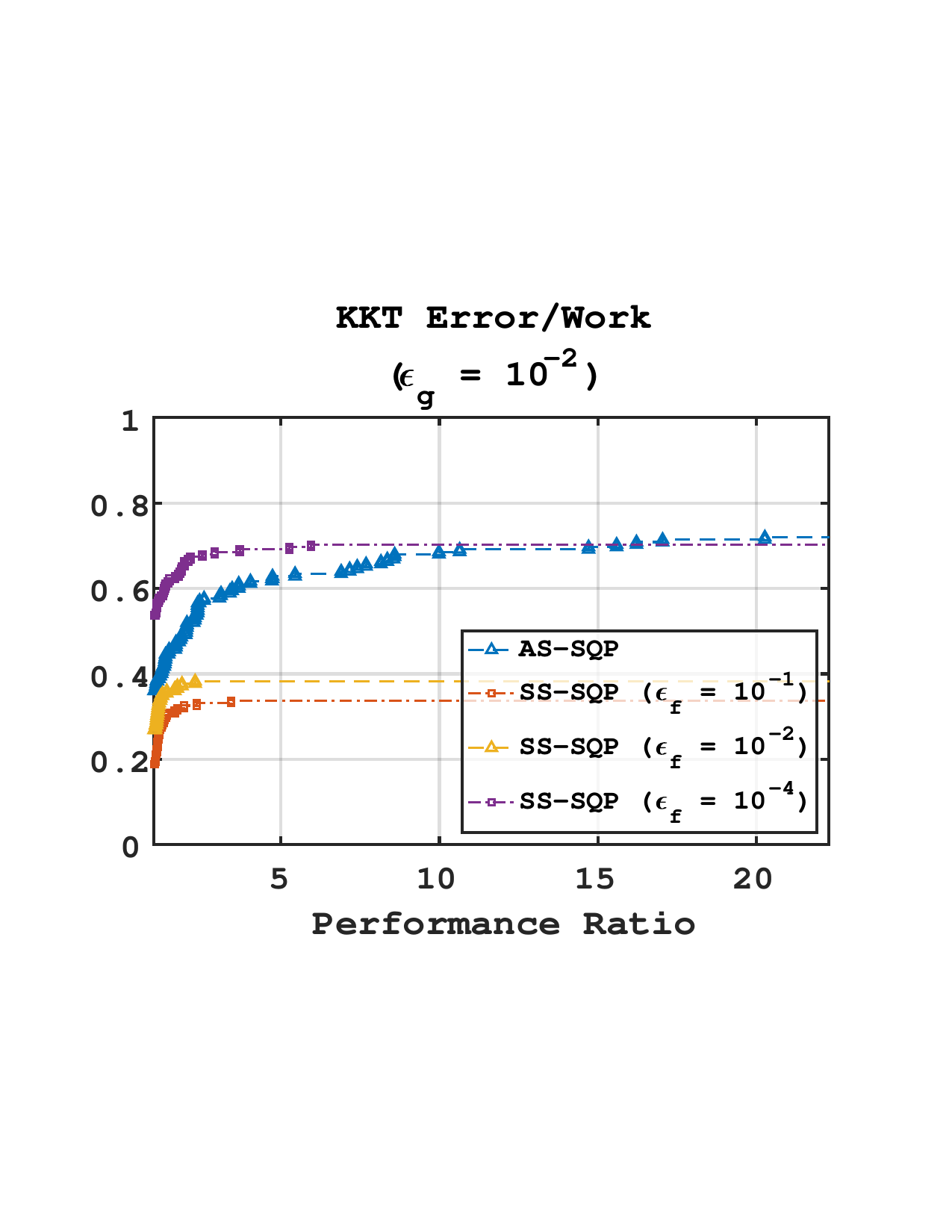}
  
  \includegraphics[width=0.24\textwidth,clip=true,trim=30 180 50 180]{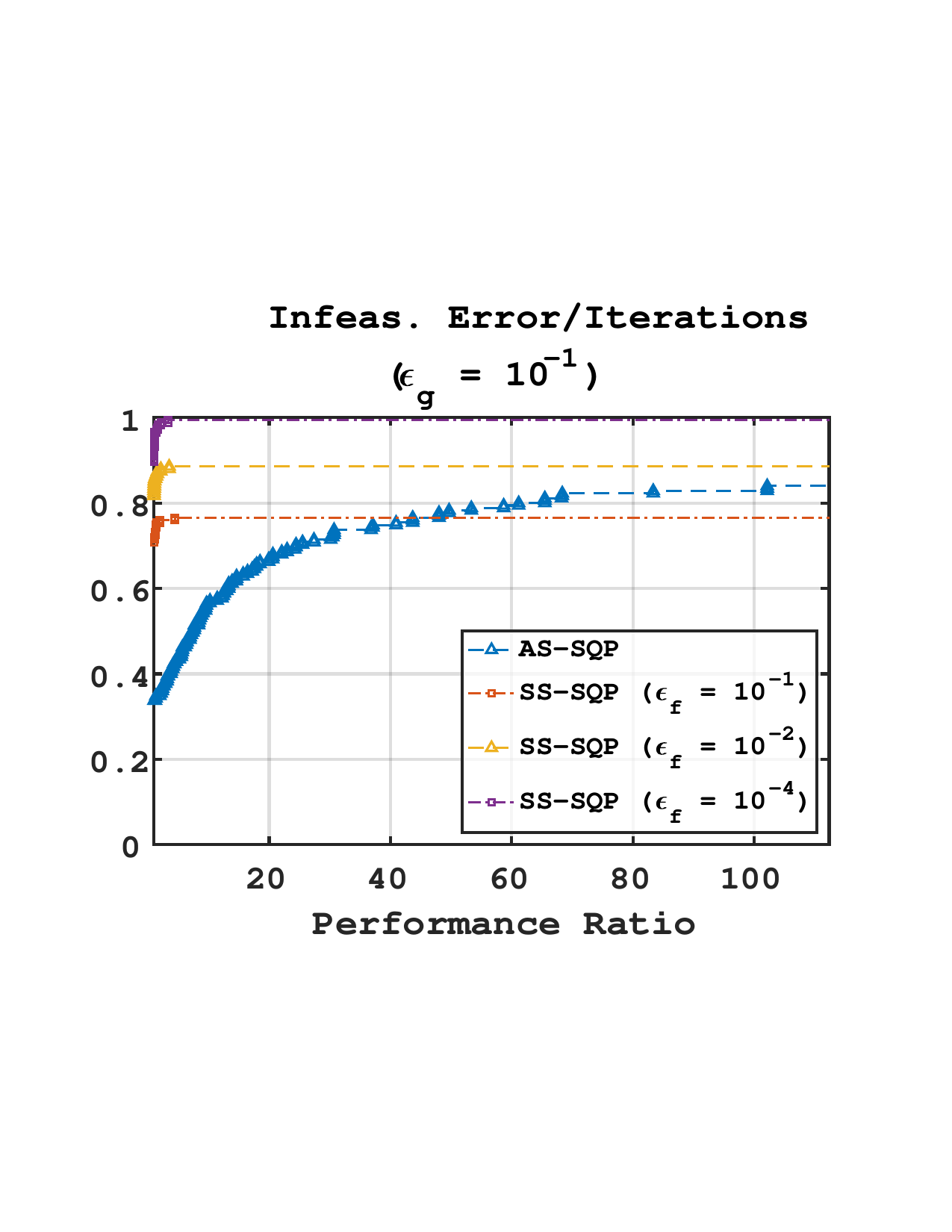}
  \includegraphics[width=0.24\textwidth,clip=true,trim=30 180 50 180]{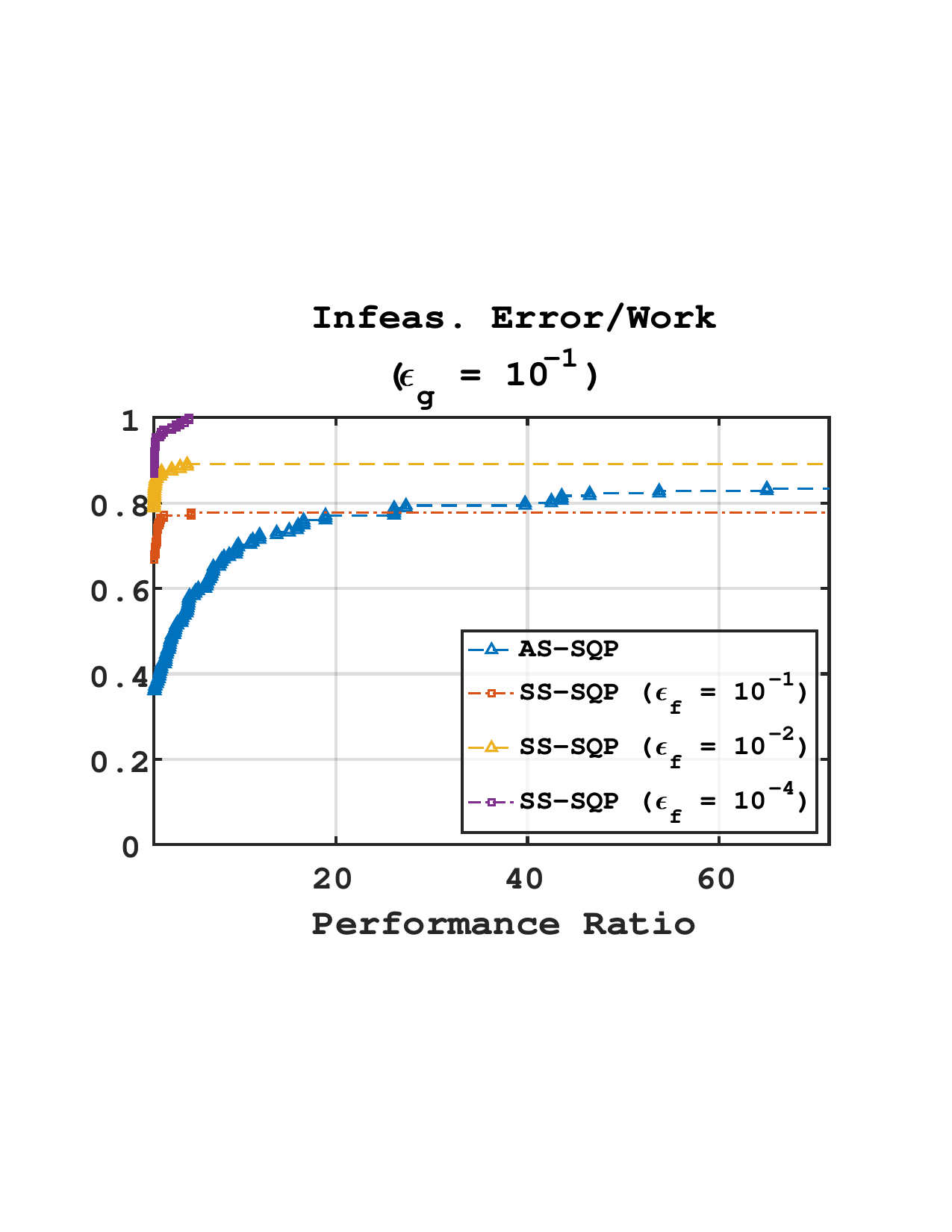}
  \includegraphics[width=0.24\textwidth,clip=true,trim=30 180 50 180]{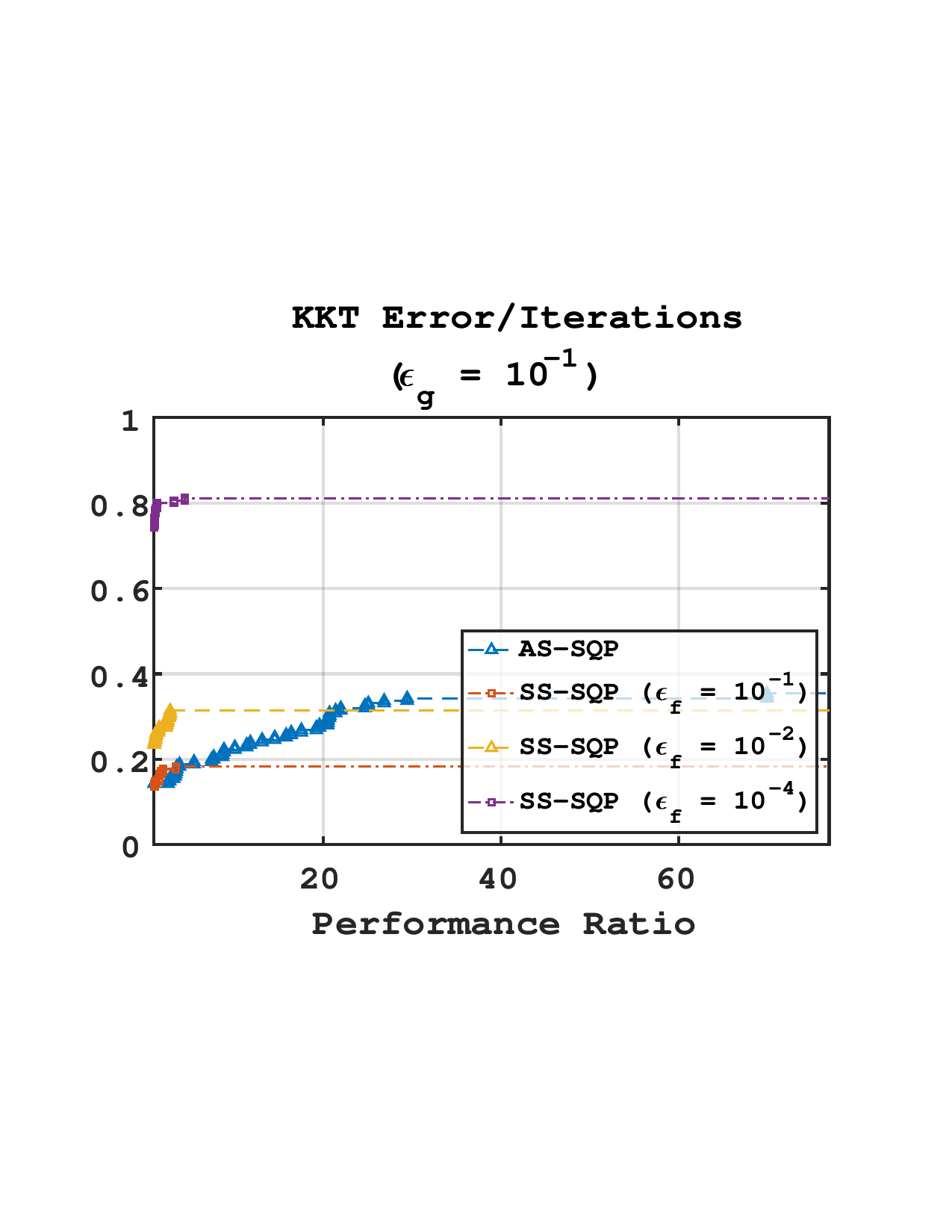}
  \includegraphics[width=0.24\textwidth,clip=true,trim=30 180 50 180]{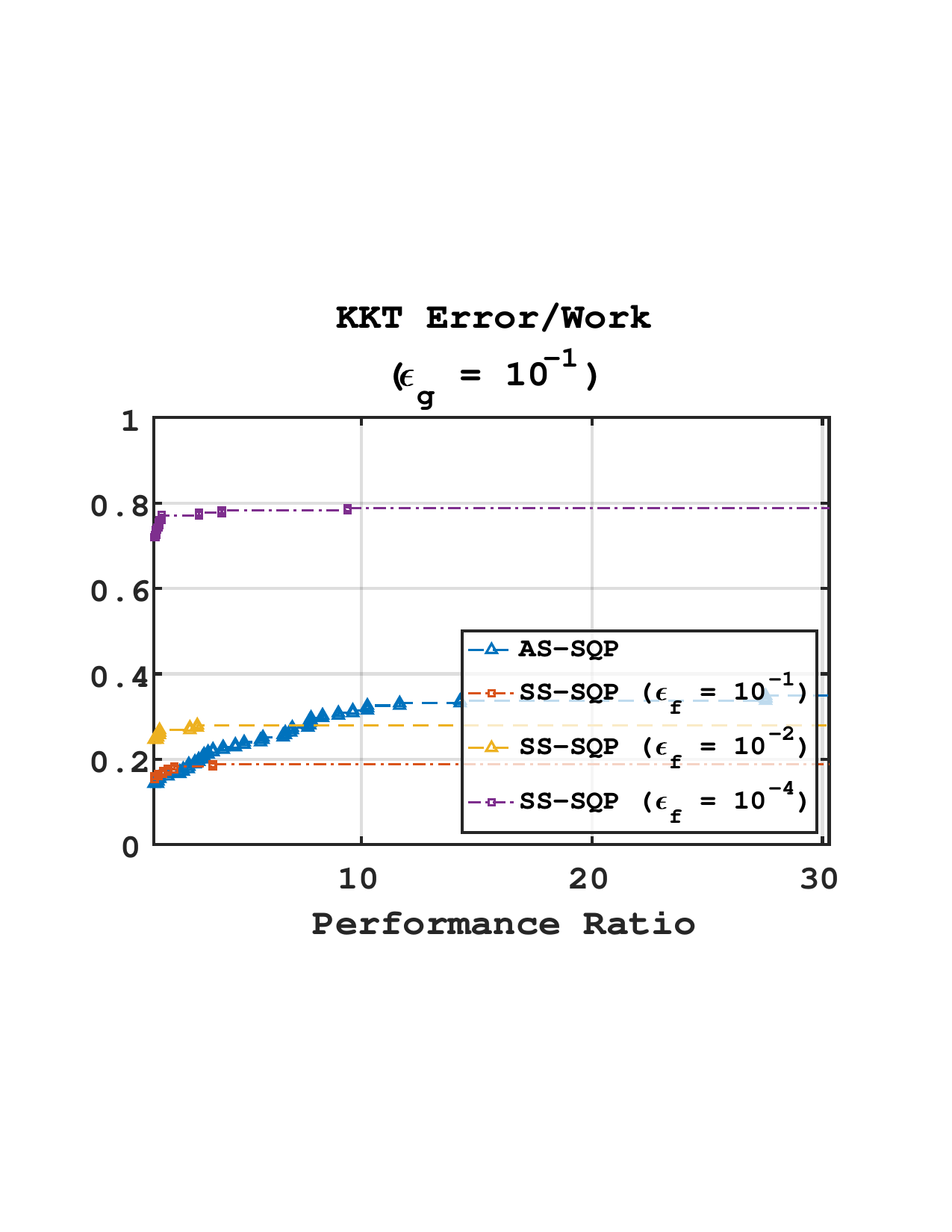}
  \caption{Performance profiles for \ASSPQ{} and \SSSPQ{} on CUTEst collection~\cite{gould2015cutest} with noise in both the objective function and gradient evaluations. Each column corresponds to a different evaluation metric (infeasibility and KKT vs. iterations and work). The noise in the objective gradient evaluations $\epsilon_g$ increases from top to bottom (First row: $\epsilon_g = 10^{-4}$; Second row: $\epsilon_g = 10^{-2}$; Third row: $\epsilon_g = 10^{-1}$). The different variants of \SSSPQ{} correspond to different levels of noise in the objective function evaluations. \label{fig.2}}
\end{figure}

\section{Conclusion}\label{sec.conclusion}
We have proposed a step-search SQP algorithm (\SSSPQ{}) for solving stochastic optimization problems with deterministic equality constraints.
We showed that under reasonable assumptions on the inexact probabilistic zeroth- and first-order oracles, for any $\varepsilon$ greater than a positive quantity dictated by the noise and bias in the oracles, with overwhelmingly high probability, in $O(\varepsilon^{-2})$ iterations our algorithm can produce an iterate that satisfies the first-order $\varepsilon$-stationarity, which matches the iteration complexity of the deterministic counterparts of the SQP algorithm \cite{CurtONeiRobi21}. Numerical results provide strong evidence for the efficiency and efficacy of the proposed method. Some future directions include but are not limited to, $(1)$ incorporating stochastic constraint evaluations into the algorithm design and analysis, and $(2)$ extending the framework to the setting with inequality constraints. Both avenues above are subjects of future work as they require significant adaptations in the design, analysis, and implementation of the algorithm.

\section*{Acknowledgments}
This material is based upon work supported by the Office of Naval Research under award number N00014-21-1-2532. We would like to thank Professors Frank E. Curtis and Katya Scheinberg for their invaluable support and feedback.


\small
\bibliographystyle{siamplain}
\bibliography{references}

\begin{thebibliography}{10}

\bibitem{BandScheVice14}
{\sc A.~S. Bandeira, K.~Scheinberg, and L.~N. Vicente}, {\em Convergence of
  trust-region methods based on probabilistic models}, SIAM J. Optim., 24
  (2014), pp.~1238--1264.

\bibitem{bellavia2022linesearch}
{\sc S.~Bellavia, E.~Fabrizi, and B.~Morini}, {\em {Linesearch Newton-CG
  methods for convex optimization with noise}}, Annali dell'Universita' di
  Ferrara, 68 (2022), pp.~483--504.

\bibitem{BeraBollZhou22}
{\sc A.~S. Berahas, R.~Bollapragada, and B.~Zhou}, {\em An adaptive sampling
  sequential quadratic programming method for equality constrained stochastic
  optimization}, arXiv preprint arXiv:2206.00712,  (2022).

\bibitem{BeraByrdNoce19}
{\sc A.~S. Berahas, R.~H. Byrd, and J.~Nocedal}, {\em {Derivative-free
  optimization of noisy functions via quasi-Newton methods}}, SIAM J. Optim.,
  29 (2019), pp.~965--993.

\bibitem{BeraCaoSche21}
{\sc A.~S. Berahas, L.~Cao, and K.~Scheinberg}, {\em Global convergence rate
  analysis of a generic line search algorithm with noise}, SIAM J. Optim., 31
  (2021), pp.~1489--1518.

\bibitem{BeraCurtONeiRobi21}
{\sc A.~S. Berahas, F.~E. Curtis, M.~J. O’Neill, and D.~P. Robinson}, {\em A
  stochastic sequential quadratic optimization algorithm for
  nonlinear-equality-constrained optimization with rank-deficient jacobians},
  Mathematics of Operations Research,  (2023).

\bibitem{BeraCurtRobiZhou21}
{\sc A.~S. Berahas, F.~E. Curtis, D.~Robinson, and B.~Zhou}, {\em Sequential
  quadratic optimization for nonlinear equality constrained stochastic
  optimization}, SIAM J. Optim., 31 (2021), pp.~1352--1379.

\bibitem{BeraShiYiZhou22}
{\sc A.~S. Berahas, J.~Shi, Z.~Yi, and B.~Zhou}, {\em Accelerating stochastic
  sequential quadratic programming for equality constrained optimization using
  predictive variance reduction}, Comput. Optim. Appl.,  (2023), pp.~1--38.

\bibitem{Bert98}
{\sc D.~Bertsekas}, {\em Network optimization: continuous and discrete models},
  vol.~8, Athena Scientific, 1998.

\bibitem{BlanCartMeniSche19}
{\sc J.~Blanchet, C.~Cartis, M.~Menickelly, and K.~Scheinberg}, {\em
  Convergence rate analysis of a stochastic trust-region method via
  supermartingales}, INFORMS J. Optim., 1 (2019), pp.~92--119.

\bibitem{byrd2008inexact}
{\sc R.~H. Byrd, F.~E. Curtis, and J.~Nocedal}, {\em {An inexact SQP method for
  equality constrained optimization}}, SIAM J. Optim., 19 (2008), pp.~351--369.

\bibitem{cao2022first}
{\sc L.~Cao, A.~S. Berahas, and K.~Scheinberg}, {\em First-and second-order
  high probability complexity bounds for trust-region methods with noisy
  oracles}, Math. Program.,  (2023), pp.~1--52.

\bibitem{CartSche18}
{\sc C.~Cartis and K.~Scheinberg}, {\em Global convergence rate analysis of
  unconstrained optimization methods based on probabilistic models}, Math.
  Program., 169 (2018), pp.~337--375.

\bibitem{ChenTungVeduMori18}
{\sc C.~Chen, F.~Tung, N.~Vedula, and G.~Mori}, {\em Constraint-aware deep
  neural network compression}, in Proceedings of the ECCV, 2018, pp.~400--415.

\bibitem{ChenMeniSche18}
{\sc R.~Chen, M.~Menickelly, and K.~Scheinberg}, {\em Stochastic optimization
  using a trust-region method and random models}, Math. Program., 169 (2018),
  pp.~447--487.

\bibitem{CurtONeiRobi21}
{\sc F.~E. Curtis, M.~J. O’Neill, and D.~P. Robinson}, {\em Worst-case
  complexity of an {SQP} method for nonlinear equality constrained stochastic
  optimization}, Math. Program.,  (2023), pp.~1--53.

\bibitem{CurtRobiZhou21}
{\sc F.~E. Curtis, D.~P. Robinson, and B.~Zhou}, {\em A stochastic inexact
  sequential quadratic optimization algorithm for nonlinear
  equality-constrained optimization}, INFORMS Journal on Optimization,  (2024).

\bibitem{gould2015cutest}
{\sc N.~I. Gould, D.~Orban, and P.~L. Toint}, {\em {CUTEst: a constrained and
  unconstrained testing environment with safe threads for mathematical
  optimization}}, Comput. Optim. Appl., 60 (2015), pp.~545--557.

\bibitem{GratRoyeViceZhan15}
{\sc S.~Gratton, C.~W. Royer, L.~N. Vicente, and Z.~Zhang}, {\em Direct search
  based on probabilistic descent}, SIAM J. Optim., 25 (2015), pp.~1515--1541.

\bibitem{GratRoyeViceZhan18}
{\sc S.~Gratton, C.~W. Royer, L.~N. Vicente, and Z.~Zhang}, {\em Complexity and
  global rates of trust-region methods based on probabilistic models}, IMA J.
  Numer. Anal., 38 (2018), pp.~1579--1597.

\bibitem{hamming2012introduction}
{\sc R.~W. Hamming}, {\em Introduction to applied numerical analysis}, Courier
  Corporation, 2012.

\bibitem{HazaLuo16}
{\sc E.~Hazan and H.~Luo}, {\em Variance-reduced and projection-free stochastic
  optimization}, in International Conference on Machine Learning, PMLR, 2016,
  pp.~1263--1271.

\bibitem{jin2022high}
{\sc B.~Jin, K.~Scheinberg, and M.~Xie}, {\em High probability complexity
  bounds for adaptive step search based on stochastic oracles}, SIAM Journal on
  Optimization, 34 (2024), pp.~2411--2439.

\bibitem{KushClar12}
{\sc H.~J. Kushner and D.~S. Clark}, {\em Stochastic approximation methods for
  constrained and unconstrained systems}, vol.~26, Springer Science \& Business
  Media, 2012.

\bibitem{Lan20}
{\sc G.~Lan}, {\em First-order and stochastic optimization methods for machine
  learning}, Springer, 2020.

\bibitem{LuFreu21}
{\sc H.~Lu and R.~M. Freund}, {\em Generalized stochastic {Frank--Wolfe}
  algorithm with stochastic “substitute” gradient for structured convex
  optimization}, Math. Program., 187 (2021), pp.~317--349.

\bibitem{menickelly2023stochastic}
{\sc M.~Menickelly, S.~M. Wild, and M.~Xie}, {\em A stochastic quasi-newton
  method in the absence of common random numbers}, arXiv preprint
  arXiv:2302.09128,  (2023).

\bibitem{more2009benchmarking}
{\sc J.~J. Mor{\'e} and S.~M. Wild}, {\em Benchmarking derivative-free
  optimization algorithms}, SIAM J. Optim., 20 (2009), pp.~172--191.

\bibitem{NaAnitKola22}
{\sc S.~Na, M.~Anitescu, and M.~Kolar}, {\em An adaptive stochastic sequential
  quadratic programming with differentiable exact augmented lagrangians}, Math.
  Program.,  (2022), pp.~1--71.

\bibitem{na2021inequality}
{\sc S.~Na, M.~Anitescu, and M.~Kolar}, {\em Inequality constrained stochastic
  nonlinear optimization via active-set sequential quadratic programming},
  Math. Program.,  (2023), pp.~1--75.

\bibitem{NaMaho22}
{\sc S.~Na and M.~W. Mahoney}, {\em Asymptotic convergence rate and statistical
  inference for stochastic sequential quadratic programming}, arXiv preprint
  arXiv:2205.13687,  (2022).

\bibitem{NandPathSing19}
{\sc Y.~Nandwani, A.~Pathak, and P.~Singla}, {\em A primal dual formulation for
  deep learning with constraints}, Advances in Neural Information Processing
  Systems, 32 (2019).

\bibitem{NoceWrig06}
{\sc J.~Nocedal and S.~Wright}, {\em Numerical optimization}, Springer Series
  in Operations Research and Financial Engineering, Springer-Verlag New York,
  2006.

\bibitem{OztoByrdNoce21}
{\sc F.~Oztoprak, R.~Byrd, and J.~Nocedal}, {\em Constrained optimization in
  the presence of noise}, SIAM J. Optim., 33 (2023), pp.~2118--2136.

\bibitem{PaquSche20}
{\sc C.~Paquette and K.~Scheinberg}, {\em A stochastic line search method with
  expected complexity analysis}, SIAM J. Optim., 30 (2020), pp.~349--376.

\bibitem{RaviDinhLokhSing19}
{\sc S.~N. Ravi, T.~Dinh, V.~S. Lokhande, and V.~Singh}, {\em Explicitly
  imposing constraints in deep networks via conditional gradients gives
  improved generalization and faster convergence}, in Proceedings of the AAAI
  Conference on Artificial Intelligence, vol.~33, 2019, pp.~4772--4779.

\bibitem{ReesDollWath10}
{\sc T.~Rees, H.~S. Dollar, and A.~J. Wathen}, {\em Optimal solvers for
  {PDE}-constrained optimization}, SIAM J. Sci. Comput., 32 (2010),
  pp.~271--298.

\bibitem{RobeRoye22}
{\sc L.~Roberts and C.~W. Royer}, {\em Direct search based on probabilistic
  descent in reduced spaces}, SIAM J. Optim., 33 (2023), pp.~3057--3082.

\bibitem{RoyMhamHara18}
{\sc S.~K. Roy, Z.~Mhammedi, and M.~Harandi}, {\em Geometry aware constrained
  optimization techniques for deep learning}, in Proceedings of CVPR, 2018,
  pp.~4460--4469.

\bibitem{scheinberg2022stochastic}
{\sc K.~Scheinberg and M.~Xie}, {\em Stochastic adaptive regularization method
  with cubics: A high probability complexity bound}, in 2023 Winter Simulation
  Conference (WSC), IEEE, 2023, pp.~3520--3531.

\bibitem{ShapDentRusz21}
{\sc A.~Shapiro, D.~Dentcheva, and A.~Ruszczynski}, {\em Lectures on stochastic
  programming: modeling and theory}, SIAM, 2021.

\bibitem{sun2023trust}
{\sc S.~Sun and J.~Nocedal}, {\em A trust region method for noisy unconstrained
  optimization}, Mathematical Programming,  (2023), pp.~1--28.

\bibitem{wachter2005line}
{\sc A.~W{\"a}chter and L.~T. Biegler}, {\em Line search filter methods for
  nonlinear programming: Motivation and global convergence}, SIAM J. Optim., 16
  (2005), pp.~1--31.

\end{thebibliography}


\appendix

\section{Constants in Assumption~\ref{ass.main_accuracy} and Lemma~\ref{algo_behave}}\label{app.constants}

In this appendix, we provide the definitions for all constants that appear in Assumption~\ref{ass.main_accuracy} and Lemma~\ref{algo_behave}.

\begin{table}[h!]
\caption{Table of Constants from Assumption~\ref{ass.main_accuracy} and Lemma~\ref{algo_behave}.}
\label{tbl:constants}
\centering 
\resizebox{\columnwidth}{!}{\begin{tabular}{lc}
    \toprule
    \textbf{Constant} & \textbf{Definition} \\  \midrule
$\eta_0$ &  $1 + \tfrac{\kappa_H}{\zeta}$\\ 
\hdashline
$\eta_1$&  $\tfrac{(1-\theta)(1+\epsilon_{\tau})\bar\tau_{-1}\eta_0}{\sqrt{\kappa_l\tau_{\min}}}$   \\ 
\hdashline
$\eta_2$&  $\sqrt{(1-\theta)^2\bar\tau_{-1}\eta_0^2\left(\tfrac{(1+\epsilon_{\tau})^2\bar\tau_{-1}}{\kappa_l\tau_{\min}} + \epsilon_{\tau}\right) + 4\bar\tau_{-1}\left(\tfrac{1+\epsilon_{\tau}\omega_1}{\kappa_l} + \tfrac{(1-\theta)^2(1+\epsilon_{\tau})}{\zeta}\right)}$   \\ 
\hdashline
$\eta_3$ & $\tfrac{(1-\theta)\bar\tau_{-1}\left(\bar\tau_{-1}\left(3\eta_0 - 2\right) + (1-\sigma)\tau_{\min}\eta_0\right)}{(1-\sigma)\tau_{\min}\sqrt{\kappa_l\tau_{\min}}}$   \\
\hdashline
$\eta_4$ & $\sqrt{\tfrac{(1-\theta)^2\bar\tau_{-1}^2\left(\bar\tau_{-1}\left(3\eta_0 - 2\right) + (1-\sigma)\tau_{\min}\eta_0\right)^2}{(1-\sigma)^2\tau_{\min}^3\kappa_l} + \tfrac{4\bar\tau_{-1}}{\kappa_l} +  \tfrac{4(1-\theta)^2\bar\tau_{-1}}{\zeta}\left(\tfrac{\bar\tau_{-1}\eta_0}{(1-\sigma)\tau_{\min}} + 1 \right)}$ \\
\hdashline
$\omega_1$ & $\tfrac{\max\{\kappa_H,\kappa_y\} + 1}{\kappa_l}$   \\
\hdashline
$\omega_2$ & $\tfrac{\eta_0\kappa_{\mathrm{FO}}\sqrt{\bar\tau_{-1}}\alpha_{\max}}{\sqrt{\kappa_l}} + \tfrac{\bar\tau_{-1}\kappa_{\mathrm{FO}}^2\alpha_{\max}^2}{\zeta}$   \\
\hdashline
$\omega_3$ & \begin{tabular}[c]{@{}c@{}}$\max\left\{\epsilon_{\tau}\left(\tfrac{\max\{\kappa_H,\kappa_y\}}{\kappa_l} + \tfrac{\sqrt{\bar\tau_{-1}}\eta_0\kappa_{\mathrm{FO}}\alpha_{\max}}{\sqrt{\kappa_l}} + \omega_2\right)\right.$,\\ $\qquad\qquad \qquad\quad  \left.\tfrac{\bar\tau_{-1}}{(1-\sigma)\tauTrue_{\min}}\left(\tfrac{(3\eta_0 - 2)\kappa_{\mathrm{FO}}\sqrt{\bar\tau_{-1}}\alpha_{\max}}{\sqrt{\kappa_l}} + \tfrac{\eta_0\bar\tau_{-1}\kappa_{\mathrm{FO}}^2\alpha_{\max}^2}{\zeta}\right) \right\}$\end{tabular}  
\\
\hdashline
$\omega_4$ & $(1 + \epsilon_{\tau})\bar\tau_{-1}\left(\tfrac{\eta}{\zeta} + \tfrac{\eta_0}{\sqrt{\kappa_l\tauTrue_{\min}}}\right) + \tfrac{\epsilon_{\tau}\bar\tau_{-1}\eta_0^2\eta}{4}$   \\
\hdashline
$\omega_5$ & $\tfrac{\bar\tau_{-1}^2\left(\tfrac{\eta_0\eta}{\zeta} + \tfrac{3\eta_0 - 2}{\sqrt{\kappa_l\tauTrue_{\min}}} \right)}{(1-\sigma)\tauTrue_{\min}} + \bar\tau_{-1}\left(\tfrac{\eta}{\zeta} + \tfrac{\eta_0}{\sqrt{\kappa_l\tauTrue_{\min}}}\right)$   \\
\hdashline
$\omega_6$ & $\sqrt{\tfrac{4\bar\tau_{-1}}{(p-\tfrac 12)\theta}\max\left\{\tfrac{1+\epsilon_{\tau}\omega_1}{1-\eta\omega_4},\tfrac{1}{1-\eta\omega_5},1+\omega_2 + \omega_3\right\}}$   \\
\hdashline
$\omega_7$ & $\sqrt{\tfrac{\bar\tau_{-1}}{\kappa_l}}\kappa_{\mathrm{FO}} + \tfrac{\bar\tau_{\min}L+ \Gamma}{2\bar\tau_{\min}\kappa_l}$\\
\hdashline
$\omega_8$ & $2\bar\tau_{\min}\kappa_l\left(1-\theta-\eta\sqrt{\tfrac{\bar\tau_{-1}}{\kappa_l}}\max\left\{\sqrt{\tfrac{1+\epsilon_{\tau}\omega_1}{1-\eta\omega_4}},\tfrac{1}{\sqrt{1-\eta\omega_5}}\right\}\right)$\\
\hdashline
$\omega_9$ & $\sqrt{\max\left\{\tfrac{\omega_7 }{1-\theta}, \tfrac{\bar\tau_{\min}L+\Gamma}{\omega_8}\right\}}$   \\
\hdashline
$\omega_{10}$ & $\omega_6\omega_9$\\
    \bottomrule
\end{tabular}}
\end{table}

\section{Proof of Lemma~\ref{algo_behave}}\label{app.proof_lemma}

In this appendix, we provide the proof of Lemma~\ref{algo_behave}. From the lemma statement, 
we note that: $(1)$ due to the constants and the form of $p$, $p$ is a valid probability, i.e., $p \in (\tfrac{1}{2},1]$, $(2)$ $\tilde\alpha > 0$ is guaranteed by the restriction on $\eta$ in Assumption~\ref{ass.main_accuracy}, and $(3)$ $h:\mathbb{R}_{>0}\to\mathbb{R}_{>0}$ is a positive function that measures the potential progress made if iterations are \emph{\textbf{true}} and \emph{\textbf{successful}}. Next, we separate the proofs of statements $(i)$--$(v)$ as follows.
\begin{proof}[Proof of (i)]
This result follows directly from the definition of $h(\tilde\alpha)$ and the lower bound on $\varepsilon_{\Delta l}$; see Assumption~\ref{ass.main_accuracy}.
\end{proof}
\begin{proof}[Proof of (ii)]
This proof is essentially the same as that from \cite[Proposition 3(ii)]{jin2022high}. Let
    \begin{equation*}
        J_k :=\mathds{1}\left\{\|{G}_k - \nabla f(X_k)\| \leq \max\left\{ \epsilon_g, \kappa_{\mathrm{FO}}A_k\sqrt{\Delta l(X_k,\mathcal{T}_k, G_k,D_k)} \right\}\right\}. 
    \end{equation*}
    Clearly, by Definition~\ref{def.true}, 
    \begin{equation*}
    \begin{aligned}
	    \P\left[I_k = 0 \mid \change{\sF_{k-1} \cap \mathcal{E}}\right]&= \P\left[J_k = 0 \;\;\text{or}\;\; E_{k}+E^+_{k} > 2\epsilon_f\mid \change{\sF_{k-1} \cap \mathcal{E}}\right] \\
	    &\leq \P\left[J_k = 0 \mid \change{\sF_{k-1} \cap \mathcal{E}}\right] + \P\left[ E_{k}+E^+_{k}> 2\epsilon_f\mid \change{\sF_{k-1} \cap \mathcal{E}}\right].
    \end{aligned}
    \end{equation*}
    The first term on the right-hand-side of the inequality is bounded above by $\delta$, by \change{Assumption~\ref{ass.good_merit_paramter} and} the first-order probabilistic oracle (Oracle~\ref{oracle.first}). The second term is zero in the case where $\epsilon_f$ is a deterministic bound on the noise. Otherwise, since $E_k$ and $E_k^+$ individually satisfy the one-sided sub-exponential bound in \eqref{eq:zero_order} with parameters $\epsilon_f$ and $(\nu, b)$ \change{conditioned on 
    event $\mathcal{E}$ (see Assumption~\ref{ass.good_merit_paramter})}, one can show that \change{conditioned on 
    event $\mathcal{E}$,} $E_k + E_k^+$ satisfies \eqref{eq:zero_order} with parameters $2\epsilon_f$ and $(2\nu, 2b)$. Hence by the one-sided Bernstein inequality, the second term is bounded above by $e^{-\min\left\{u^2/2\nu^2,u/2b\right\}}$, with $u = \inf_{x\in\mathcal{X}} \{\epsilon_f - \E[E(x)]\}$. As a result, $\P\left[I_k = 1 \mid \change{\sF_{k-1} \cap \mathcal{E}}\right] \geq p$ 
    for all $k$, for $p$ as defined in the statement. The range of $p \in \left(\tfrac12 + \tfrac{4\bar\tau_{-1}\epsilon_f}{h(\tilde\alpha)}, 1\right]$ follows from the definitions of $h(\cdot)$ and $\tilde\alpha$ in the statement, together with the inequality on $\varepsilon_{\Delta l}$ in Assumption~\ref{ass.main_accuracy}.
\end{proof}
\begin{proof}[Proof of (iii)]
Suppose iteration $k$ is \emph{\textbf{true}} and \emph{\textbf{successful}}. By Definition~\ref{def.true}, there are two cases, $\|\bar{g}_k - \nabla f_k\| \leq \kappa_{\mathrm{FO}}\alpha_k\sqrt{\Delta l(x_k,\bar\tau_{k},\bar{g}_k,\bar{d}_k)}$ and $\|\bar{g}_k - \nabla f_k\| \leq \epsilon_g$, that we consider separately.
    We further subdivide the analysis into the case where $\nabla f_k^T\dTrue_k \leq 0$ and $\nabla f_k^T\dTrue_k > 0$. \\    
    \textbf{Case A} \  When $\|\bar{g}_k - \nabla f(x_k)\| \leq \kappa_{\mathrm{FO}}\alpha_k\sqrt{\Delta l(x_k,\bar\tau_{k},\bar{g}_k,\bar{d}_k)}$, by Lemma~\ref{lem.step_diff_bound},
        \begin{equation*}
            \|\bar{d}_k - \dTrue_k\| \leq \zeta^{-1}\|\bar{g}_k - \nabla f(x_k)\| \leq \zeta^{-1}\kappa_{\mathrm{FO}}\alpha_k\sqrt{\Delta l(x_k,\bar\tau_{k},\bar{g}_k,\bar{d}_k)}.
        \end{equation*}
    \textbf{Case A.1} \ If $\nabla f_k^T\dTrue_k \leq 0$, by the fact that $\bar\tau_k \geq \tauTrue_k$, the triangle inequality, \eqref{eq.model_reduction} and Lemma~\ref{lem.gd_dHd_diff_case_a_2}, it follows that
        \begin{equation}\label{eq.case_a1}
        \begin{aligned}
            &\Delta l(x_k,\tauTrue_k,\nabla f_k,\dTrue_k) - \Delta l(x_k,\bar\tau_k,\bar{g}_k,\bar{d}_k) \\
            =\ & \bar\tau_k\bar{g}_k^T\bar{d}_k - \tauTrue_k \nabla f_k^T\dTrue_k \leq \bar\tau_k(\bar{g}_k^T\bar{d}_k - \nabla f_k^T\dTrue_k) \leq \bar\tau_k|\bar{g}_k^T\bar{d}_k - \nabla f_k^T\dTrue_k| \\
            \leq \ &\bar\tau_k\left(\tfrac{(1+\kappa_H\zeta^{-1})\kappa_{\mathrm{FO}}\alpha_k}{\sqrt{\kappa_l\bar\tau_k}} + \tfrac{\kappa_{\mathrm{FO}}^2\alpha_k^2}{\zeta}\right)\Delta l(x_k,\bar\tau_k,\bar{g}_k,\bar{d}_k).
        \end{aligned}
        \end{equation}
    \textbf{Case A.2} \ If $\nabla f_k^T\dTrue_k > 0$, by the triangle inequality, \eqref{eq.model_reduction} and Lemma~\ref{lem.gd_dHd_diff_case_a_2}, 
        \begin{equation}\label{eq.case_a2}
        \begin{aligned}
            &\Delta l(x_k,\tauTrue_k,\nabla f_k,\dTrue_k) - \Delta l(x_k,\bar\tau_k,\bar{g}_k,\bar{d}_k) \\
            = \ & \bar\tau_k\bar{g}_k^T\bar{d}_k - \tauTrue_k \nabla f_k^T\dTrue_k \leq |\bar\tau_k\bar{g}_k^T\bar{d}_k - \tauTrue_k \nabla f_k^T\dTrue_k| \leq |(\bar\tau_k - \tauTrue_k)\nabla f_k^T\dTrue_k| + \bar\tau_k|\bar{g}_k^T\bar{d}_k - \nabla f_k^T\dTrue_k| \\
            \leq \ &|(\bar\tau_k - \tauTrue_k)\nabla f_k^T\dTrue_k| + \bar\tau_k\left(\tfrac{(1+\kappa_H\zeta^{-1})\kappa_{\mathrm{FO}}\alpha_k}{\sqrt{\kappa_l\bar\tau_k}} + \tfrac{\kappa_{\mathrm{FO}}^2\alpha_k^2}{\zeta}\right)\Delta l(x_k,\bar\tau_k,\bar{g}_k,\bar{d}_k).
        \end{aligned}
        \end{equation}
        We now bound the term $|(\bar\tau_k - \tauTrue_k)\nabla f_k^T\dTrue_k|$; we consider three cases due to the merit parameter updating formulae  (\eqref{eq.tau_update}--\eqref{eq.tau_trial} and \eqref{eq.tautrue_update}--\eqref{eq.tautrue_trial}). \\
        \textbf{Case A.2.1} \ If $\tauTrue_k = \bar\tau_k$, then $|(\bar\tau_k - \tauTrue_k)\nabla f_k^T\dTrue_k| = 0$. \\
        \textbf{Case A.2.2} \ If $\tauTrue_k = (1-\epsilon_{\tau})\bar\tau_{k}$, by the triangle inequality and Lemmas~\ref{lem.|gd|_2} and~\ref{lem.gd_dHd_diff_case_a_2},
        \begin{align*}
         |(\bar\tau_k - \tauTrue_k)\nabla f_k^T\dTrue_k| = \ &  \epsilon_{\tau}\bar\tau_k|\nabla f_k^T\dTrue_k| \leq \epsilon_{\tau}\bar\tau_k(|\bar{g}_k^T\bar{d}_k| + |\nabla f_k^T\dTrue_k - \bar{g}_k^T\bar{d}_k|) \\
            \leq \ &\epsilon_{\tau}\left(\tfrac{\max\{\kappa_H,\kappa_y\}}{\kappa_l} + \tfrac{\sqrt{\bar\tau_k}\left(1 + \frac{\kappa_H}{\zeta}\right)\kappa_{\mathrm{FO}}\alpha_k}{\sqrt{\kappa_l}}\right)\Delta l(x_k,\bar\tau_k,\bar{g}_k,\bar{d}_k) \\
            & +  \epsilon_{\tau}\bar\tau_k\left(\tfrac{(1+\frac{\kappa_H}{\zeta})\kappa_{\mathrm{FO}}\alpha_k}{\sqrt{\kappa_l\bar\tau_k}} + \tfrac{\kappa_{\mathrm{FO}}^2\alpha_k^2}{\zeta}\right)\Delta l(x_k,\bar\tau_k,\bar{g}_k,\bar{d}_k).
        \end{align*}
        \textbf{Case A.2.3} \ If $\bar\tau_{k} > \tauTrue_k = \tfrac{(1-\sigma)\|c_k\|_1}{\nabla f_k^T\dTrue_k + \max\left\{\dTrue_k^TH_k\dTrue_k,0\right\}}$, by  \eqref{eq.tau_update}--\eqref{eq.tau_trial},
        \begin{equation}\label{eq.tau_diff_bound}
        \begin{aligned}
            \nabla f_k^T\dTrue_k + \max\left\{\dTrue_k^TH_k\dTrue_k,0\right\} > \tfrac{(1-\sigma)\|c_k\|_1}{\bar\tau_{k}} \geq \bar{g}_k^T\bar{d}_k + \max\left\{\bar{d}_k^TH_k\bar{d}_k,0\right\}.
        \end{aligned}
        \end{equation}
        \change{Conditioned on 
        event $\mathcal{E}$ (see Assumption~\ref{ass.good_merit_paramter}), by} Lemma~\ref{lem.deter_tau_lb}, 
        we have $\tauTrue_k \geq \tauTrue_{\min}$ for all $k\in\N{}$. Moreover, it follows from \eqref{eq.model_reduction} and Lemma~\ref{lem.Deltal_lb_det} that $0 \leq \Delta l(x_k,\tauTrue_k,\nabla f_k,\dTrue_k)$, which implies $\tauTrue_k\nabla f_k^T\dTrue_k \leq \|c_k\|_1$. Using the fact that $\tauTrue_k\in\R{}_{>0}$ and $\nabla f_k^T\dTrue_k > 0$, 
        \begin{equation}\label{eq.gd/c_bound}
            \tfrac{|\nabla f_k^T\dTrue_k|}{\|c_k\|_1} = \tfrac{\nabla f_k^T\dTrue_k}{\|c_k\|_1} \leq \tfrac{1}{\tauTrue_k}.
        \end{equation}
        By Lemma~\ref{lem.gd_dHd_diff_case_a_2}, \eqref{eq.tau_diff_bound} and \eqref{eq.gd/c_bound}, it follows that
        \begin{align*}
            &|(\bar\tau_k - \tauTrue_k)\nabla f_k^T\dTrue_k| \\
            = \ & \left(\bar\tau_k - \tfrac{(1-\sigma)\|c_k\|_1}{\nabla f_k^T\dTrue_k + \max\left\{\dTrue_k^TH_k\dTrue_k,0\right\}}\right)|\nabla f_k^T\dTrue_k| \\
            \leq \ &\tfrac{(\nabla f_k^T\dTrue_k + \max\left\{\dTrue_k^TH_k\dTrue_k,0\right\}) - (\bar{g}_k^T\bar{d}_k + \max\{\bar{d}_k^TH_k\bar{d}_k,0\})}{\nabla f_k^T\dTrue_k + \max\left\{\dTrue_k^TH_k\dTrue_k,0\right\}}\bar\tau_k|\nabla f_k^T\dTrue_k| \\
            \leq \ &\tfrac{|\nabla f_k^T\dTrue_k - \bar{g}_k^T\bar{d}_k| + |\max\left\{\dTrue_k^TH_k\dTrue_k,0\right\} - \max\{\bar{d}_k^TH_k\bar{d}_k,0\}|}{(1-\sigma)\|c_k\|_1} \bar\tau_k^2|\nabla f_k^T\dTrue_k| \\
            \leq \ &\tfrac{\bar\tau_k^2}{(1-\sigma)\tauTrue_k} \left(|\nabla f_k^T\dTrue_k - \bar{g}_k^T\bar{d}_k| + |\max\left\{\dTrue_k^TH_k\dTrue_k,0\right\} - \max\{\bar{d}_k^TH_k\bar{d}_k,0\}|\right) \\
            \leq \ &\tfrac{\bar\tau_k^2}{(1-\sigma)\tauTrue_k} \left(|\nabla f_k^T\dTrue_k - \bar{g}_k^T\bar{d}_k| + |\dTrue_k^TH_k\dTrue_k - \bar{d}_k^TH_k\bar{d}_k|\right) \\
            \leq \ &\tfrac{\bar\tau_k^2}{(1-\sigma)\tauTrue_{\min}}\left(\tfrac{(1 + 3\kappa_H\zeta^{-1})\kappa_{\mathrm{FO}}\alpha_k}{\sqrt{\kappa_l\bar\tau_k}} + \frac{(1+\kappa_H\zeta^{-1})}{\zeta}\kappa_{\mathrm{FO}}^2\alpha_k^2 \right)\Delta l(x_k,\bar\tau_k,\bar{g}_k,\bar{d}_k).
        \end{align*}
        Combining \textbf{Cases A.2.1--A.2.3}, and by \eqref{eq.case_a1}, \eqref{eq.case_a2}, and the definitions of $\{\omega_2,\omega_3\}\subset\R{}_{>0}$ (Assumption~\ref{ass.main_accuracy} and Table~\ref{tbl:constants}), it follows that
        \begin{equation*}
            \Delta l(x_k,\tauTrue_k,\nabla f_k,\dTrue_k) - \Delta l(x_k,\bar\tau_k,\bar{g}_k,\bar{d}_k) \leq (\omega_2 + \omega_3)\Delta l(x_k,\bar\tau_k,\bar{g}_k,\bar{d}_k).
        \end{equation*}
        By  $\{\omega_2,\omega_3\}\subset\R{}_{>0}$, $\tfrac{\Delta l(x_k,\tauTrue_k,\nabla f_k,\dTrue_k)}{1+\omega_2 + \omega_3} \leq \Delta l(x_k,\bar\tau_{k},\bar{g}_k,\bar{d}_k).$ 
        By the fact that iteration $k$ is \textbf{\emph{successful}} and Definition~\ref{def.successful}, it follows that
        \begin{align*}
            \bar\phi(x_k^+,\bar\tau_k;\xi_k^+) - \bar\phi(x_k,\bar\tau_k;\xi_k) &\leq -\alpha_k\theta\Delta l(x_k,\bar\tau_{k},\bar{g}_k,\bar{d}_k) + 2\bar\tau_{k}\epsilon_f \\
            &\leq -\alpha_k\theta\tfrac{\Delta l(x_k,\tauTrue_k,\nabla f_k,\dTrue_k)}{1+\omega_2 + \omega_3} + 2\bar\tau_{-1}\epsilon_f .
        \end{align*}
        Hence, it follows that
        \begin{equation}\label{eq.parta}
        \begin{aligned}
        &Z_{k+1} - Z_k \\
        = \ &\phi(x_{k+1},\bar\tau_{k+1}) - \phi(x_k,\bar\tau_k) - \bar\tau_{k+1}f_{\inf} + \bar\tau_kf_{\inf} \\
        \leq \ &\phi(x_{k+1},\bar\tau_{k+1}) - \bar\phi(x_k,\bar\tau_k;\xi_k) - \bar\tau_{k+1}f_{\inf} + \bar\tau_kf_{\inf} + \bar\tau_k e_k \\
        = \ &\phi(x_{k+1},\bar\tau_{k+1}) - \bar\phi(x_{k+1},\bar\tau_k;\xi_k^+) + \bar\phi(x_{k+1},\bar\tau_k;\xi_k^+) - \bar\phi(x_k,\bar\tau_k;\xi_k) - \bar\tau_{k+1}f_{\inf} + \bar\tau_kf_{\inf} + \bar\tau_k e_k \\
        \leq \ &-\alpha_k\theta\tfrac{\Delta l(x_k,\tauTrue_k,\nabla f_k,\dTrue_k)}{1+\omega_2 + \omega_3} + 2\bar\tau_{-1}\epsilon_f + (\bar\tau_{k+1} - \bar\tau_k)(f(x_{k+1}) - f_{\inf}) + \bar\tau_k (e_k + e_k^+) \\
        \leq \ &-\alpha_k\theta\tfrac{\Delta l(x_k,\tauTrue_k,\nabla f_k,\dTrue_k)}{1+\omega_2 + \omega_3} + 2\bar\tau_{-1}\epsilon_f + \bar\tau_k (e_k + e_k^+).
        \end{aligned}
        \end{equation}
        \textbf{Case B} \ When $\|\bar{g}_k - \nabla f(x_k)\| \leq \epsilon_g$, by $k < T_{\varepsilon_{\Delta l}}$ and Definition~\ref{def.iter_term}, it follows that $\sqrt{\Delta l(x_k,\tauTrue_k,\nabla f_k,\dTrue_k)} > \varepsilon_{\Delta l} > \tfrac{\epsilon_g}{\eta}$. By Lemma~\ref{lem.step_diff_bound}, 
        \begin{equation*}
            \|\bar{d}_k - \dTrue_k\| \leq \zeta^{-1}\|\bar{g}_k - \nabla f_k\| \leq \zeta^{-1}\epsilon_g < \zeta^{-1}\eta\sqrt{\Delta l(x_k,\tauTrue_k,\nabla f_k,\dTrue_k)}.
        \end{equation*}
        \textbf{Case B.1} \ Similarly to \textbf{Case A.1}, if $\nabla f_k^T\dTrue_k \leq 0$, by the fact that $\bar\tau_k \geq \tauTrue_k$, the triangle inequality, \eqref{eq.model_reduction} and Lemma~\ref{lem.gd_dHd_diff_case_a_2}, it follows that
        \begin{equation}\label{eq.case_b1}
        \begin{aligned}
            &\Delta l(x_k,\tauTrue_k,\nabla f_k,\dTrue_k) - \Delta l(x_k,\bar\tau_k,\bar{g}_k,\bar{d}_k) \\
            \leq \ & \bar\tau_k|\bar{g}_k^T\bar{d}_k - \nabla f_k^T\dTrue_k| \leq \bar\tau_k\left(\zeta^{-1}\epsilon_g^2 + \tfrac{(1+\kappa_H\zeta^{-1})\epsilon_g}{\sqrt{\kappa_l\tauTrue_k}}\sqrt{\Delta l(x_k,\tauTrue_k,\nabla f_k,\dTrue_k)}\right) \\
            \leq \ &\bar\tau_k\left(\tfrac{\eta}{\zeta} + \tfrac{1+\kappa_H\zeta^{-1}}{\sqrt{\kappa_l\tauTrue_k}}\right) \eta\Delta l(x_k,\tauTrue_k,\nabla f_k,\dTrue_k).
        \end{aligned}
        \end{equation}
        \textbf{Case B.2} \ If $\nabla f_k^T\dTrue_k > 0$, using previous arguments, \eqref{eq.model_reduction} and Lemma~\ref{lem.gd_dHd_diff_case_a_2},
        \begin{equation}\label{eq.case_b2}
        \begin{aligned}
            &\Delta l(x_k,\tauTrue_k,\nabla f_k,\dTrue_k) - \Delta l(x_k,\bar\tau_k,\bar{g}_k,\bar{d}_k) \\
            \leq \ & |(\bar\tau_k - \tauTrue_k)\nabla f_k^T\dTrue_k| + \bar\tau_k|\bar{g}_k^T\bar{d}_k - \nabla f_k^T\dTrue_k| \\
            \leq \ &|(\bar\tau_k - \tauTrue_k)\nabla f_k^T\dTrue_k| + \bar\tau_k\left(\tfrac{\epsilon_g^2}{\zeta} + \tfrac{(1+\kappa_H\zeta^{-1})\epsilon_g}{\sqrt{\kappa_l\tauTrue_k}}\sqrt{\Delta l(x_k,\tauTrue_k,\nabla f_k,\dTrue_k)}\right) \\
            \leq \ &|(\bar\tau_k - \tauTrue_k)\nabla f_k^T\dTrue_k| + \bar\tau_k\left(\tfrac{\eta}{\zeta} + \tfrac{1+\kappa_H\zeta^{-1}}{\sqrt{\kappa_l\tauTrue_k}}\right) \eta\Delta l(x_k,\tauTrue_k,\nabla f_k,\dTrue_k).
        \end{aligned}
        \end{equation}
        We proceed to bound the term $|(\bar\tau_k - \tauTrue_k)\nabla f_k^T\dTrue_k|$. \\
        \textbf{Case B.2.1} \ If $\tauTrue_k = \bar\tau_k$, then $|(\bar\tau_k - \tauTrue_k)\nabla f_k^T\dTrue_k| = 0$. \\
        \textbf{Case B.2.2} \ If $\tauTrue_k = (1-\epsilon_{\tau})\bar\tau_{k}$, then by Lemmas~\ref{lem.|gd|_2} and \ref{lem.gd_dHd_diff_case_a_2} and Assumption~\ref{ass.main_accuracy},
        \begin{align*}
            &|(\bar\tau_k - \tauTrue_k)\nabla f_k^T\dTrue_k| \\
            = \ &  \epsilon_{\tau}\bar\tau_k|\nabla f_k^T\dTrue_k| \leq \epsilon_{\tau}\bar\tau_k\left(|\bar{g}_k^T\bar{d}_k| + |\nabla f_k^T\dTrue_k - \bar{g}_k^T\bar{d}_k|\right) \\
            \leq \ &\epsilon_{\tau}\omega_2\Delta l(x_k,\bar\tau_k,\bar{g}_k,\bar{d}_k) + \tfrac{\epsilon_{\tau}\bar\tau_k(1+\kappa_H\zeta^{-1})^2}{4}\epsilon_g^2 \\
            & + \epsilon_{\tau}\bar\tau_k\left( \tfrac{\epsilon_g^2}{\zeta} + \tfrac{(1+\kappa_H\zeta^{-1})\epsilon_g}{\sqrt{\kappa_l\tauTrue_k}}\sqrt{\Delta l(x_k,\tauTrue_k,\nabla f_k,\dTrue_k)} \right) \\
            \leq \ &\epsilon_{\tau}\omega_2\Delta l(x_k,\bar\tau_k,\bar{g}_k,\bar{d}_k) + \epsilon_{\tau}\bar\tau_k\eta\left( \tfrac{(1+\kappa_H\zeta^{-1})^2\eta}{4} + \tfrac{\eta}{\zeta} + \tfrac{1+\kappa_H\zeta^{-1}}{\sqrt{\kappa_l\tauTrue_k}} \right)\Delta l(x_k,\tauTrue_k,\nabla f_k,\dTrue_k).
        \end{align*}
        \textbf{Case B.2.3} \ If $\bar\tau_{k} > \tauTrue_k = \tfrac{(1-\sigma)\|c_k\|_1}{\nabla f_k^T\dTrue_k + \max\left\{\dTrue_k^TH_k\dTrue_k,0\right\}}$, following the same logic as in \textbf{Case A.2.3}, by Lemma~\ref{lem.gd_dHd_diff_case_a_2},  \eqref{eq.tau_diff_bound} and \eqref{eq.gd/c_bound},
        \begin{align*}
            & |(\bar\tau_k - \tauTrue_k)\nabla f_k^T\dTrue_k| \\ 
            \leq \ &\tfrac{\bar\tau_k^2}{(1-\sigma)\tauTrue_k} \left(|\nabla f_k^T\dTrue_k - \bar{g}_k^T\bar{d}_k| + |\dTrue_k^TH_k\dTrue_k - \bar{d}_k^TH_k\bar{d}_k|\right)\\
            \leq \ &\tfrac{\bar\tau_k^2}{(1-\sigma)\tauTrue_{\min}} \left(\tfrac{(1+\kappa_H\zeta^{-1})\eta}{\zeta} + \tfrac{1+3\kappa_H\zeta^{-1}}{\sqrt{\kappa_l\tauTrue_k}} \right)\eta\Delta l(x_k,\tauTrue_k,\nabla f_k,\dTrue_k).
        \end{align*}
        Combining \textbf{Cases B.2.1--B.2.3}, and by \eqref{eq.case_b1}, \eqref{eq.case_b2}, and the definitions of $\{\omega_1,\omega_4,\omega_5\} \subset \mathbb{R}_{>0}$  (Assumption~\ref{ass.main_accuracy} and Table~\ref{tbl:constants}), 
        it follows that
      \begin{equation}\label{eq.case_b_bound}
        \begin{aligned}
            &\Delta l(x_k,\tauTrue_k,\nabla f_k,\dTrue_k) - \Delta l(x_k,\bar\tau_k,\bar{g}_k,\bar{d}_k) \\
            \leq \ &\max\left\{\epsilon_{\tau}\omega_1\Delta l(x_k,\bar\tau_k,\bar{g}_k,\bar{d}_k) + \eta\omega_4\Delta l(x_k,\tauTrue_k,\nabla f_k,\dTrue_k), \eta\omega_5\Delta l(x_k,\tauTrue_k,\nabla f_k,\dTrue_k)\right\},
        \end{aligned}
        \end{equation}
        where $\{\omega_1,\omega_4,\omega_5\} \subset \mathbb{R}_{>0}$ are defined in Assumption~\ref{ass.main_accuracy}.
        Thus, it follows,
\begin{equation}\label{eq.case_b_Deltal_deter_stoch_bound}
            \Delta l(x_k,\bar\tau_{k},\bar{g}_k,\bar{d}_k) \geq \min\left\{\tfrac{1-\eta\omega_4}{1+\epsilon_{\tau}\omega_1}, 1-\eta\omega_5\right\} \Delta l(x_k,\tauTrue_k,\nabla f_k,\dTrue_k).
        \end{equation}
        By selecting $\eta$ following Assumption~\ref{ass.main_accuracy}, using the fact that iteration $k$ is \textbf{\emph{successful}} and Definition~\ref{def.successful},
        \begin{align*}
            & \bar\phi(x_k^+,\bar\tau_k;\xi_k^+) - \bar\phi(x_k,\bar\tau_k;\xi_k) \\
            \leq \ &-\alpha_k\theta\Delta l(x_k,\bar\tau_{k},\bar{g}_k,\bar{d}_k) + 2\bar\tau_{k}\epsilon_f  \\
            \leq \ &-\alpha_k\theta\min\left\{\tfrac{1-\eta\omega_4}{1+\epsilon_{\tau}\omega_1}, 1-\eta\omega_5\right\} \Delta l(x_k,\tauTrue_k,\nabla f_k,\dTrue_k) + 2\bar\tau_{-1}\epsilon_f. 
        \end{align*}
        Hence, following similar logic as in~\eqref{eq.parta}, it follows that
        \begin{align*}
            &Z_{k+1} - Z_k \\
        \leq \ &\phi(x_{k+1},\bar\tau_{k+1}) - \bar\phi(x_{k+1},\bar\tau_k;\xi_k^+) + \bar\phi(x_{k+1},\bar\tau_k;\xi_k^+) - \bar\phi(x_k,\bar\tau_k;\xi_k) \\
        &- \bar\tau_{k+1}f_{\inf} + \bar\tau_kf_{\inf} + \bar\tau_k e_k \\
        \leq \ &-\alpha_k\theta\min\left\{\tfrac{1-\eta\omega_4}{1+\epsilon_{\tau}\omega_1}, 1-\eta\omega_5\right\} \Delta l(x_k,\tauTrue_k,\nabla f_k,\dTrue_k) + 2\bar\tau_{-1}\epsilon_f  \\
        &+ (\bar\tau_{k+1} - \bar\tau_k)(f(x_{k+1}) - f_{\inf}) + \bar\tau_k (e_k + e_k^+) \\
        \leq\ &-\alpha_k\theta\min\left\{\tfrac{1-\eta\omega_4}{1+\epsilon_{\tau}\omega_1}, 1-\eta\omega_5\right\} \Delta l(x_k,\tauTrue_k,\nabla f_k,\dTrue_k) + 2\bar\tau_{-1}\epsilon_f  + \bar\tau_k (e_k + e_k^+).
        \end{align*}
        Combining the  results for \textbf{Case A} and \textbf{Case B}, together with the assumption that the iteration is \emph{\textbf{true}}, it follows that
        \begin{align*}
        Z_{k+1} - Z_k \leq \ &-\alpha_k\theta\min\left\{\tfrac{1-\eta\omega_4}{1+\epsilon_{\tau}\omega_1}, 1-\eta\omega_5, \tfrac{1}{1+\omega_2+\omega_3}\right\} \Delta l(x_k,\tauTrue_k,\nabla f_k,\dTrue_k) \\
        &+ 2\bar\tau_{-1}\epsilon_f + \bar\tau_{-1}(e_k + e_k^+) \\
        \leq \ &-h(\alpha_k) + 4\bar\tau_{-1}\epsilon_f ,
        \end{align*}
        where the last inequality is from the conditions $\Delta l(x_k,\tauTrue_k,\nabla f_k,\dTrue_k) > \varepsilon_{\Delta l}^2$ and $e_k + e_k^+ \leq 2\epsilon_f$.
\end{proof}
\begin{proof}[Proof of (iv)]
We first show that for any $k\in\N{}$, if $\alpha_k\leq \tilde\alpha$ and iteration $k$ is \emph{\textbf{true}}, then
    \begin{equation*}
        \phi(x_k + \alpha \bar{d}_k,\bar\tau_k) \leq \phi(x_k,\bar\tau_k) - \alpha_k\theta\Delta l(x_k,\bar\tau_k,\bar{g}_k,\bar{d}_k).
    \end{equation*}
    Since iteration $k$ is \emph{\textbf{true}}, by Definition~\ref{def.true}, we again consider two cases separately: $\|\bar{g}_k - \nabla f_k\| \leq \kappa_{\mathrm{FO}}\alpha_k\sqrt{\Delta l(x_k,\bar\tau_k,\bar{g}_k,\bar{d}_k)}$ and $\|\bar{g}_k - \nabla f_k\| \leq \epsilon_g$. \\
    \textbf{Case A} \ When $\|\bar{g}_k - \nabla f_k\| \leq \kappa_{\mathrm{FO}}\alpha_k\sqrt{\Delta l(x_k,\bar\tau_k,\bar{g}_k,\bar{d}_k)}$, by $\alpha_k \leq \tilde\alpha$, the definition of $\tilde\alpha$,
Assumption~\ref{ass.good_merit_paramter} \change{(the occurrence of event $\mathcal{E}$)} and Lemmas~\ref{lem.Deltal_lb} and~\ref{lem.suff_decrease_exact},
    \begin{align*}
        &\ \phi(x_k + \alpha_k\bar{d}_k,\bar\tau_k) - \phi(x_k,\bar\tau_k) \\
        \leq \ &-\alpha_k\Delta l(x_k,\bar\tau_k,\bar{g}_k,\bar{d}_k) + \alpha_k\bar\tau_k(\nabla f_k - \bar{g}_k)^T\bar{d}_k + \tfrac{\bar\tau_kL+\Gamma}{2}\alpha_k^2\|\bar{d}_k\|^2 \\
        \leq \ &-\alpha_k\Delta l(x_k,\bar\tau_k,\bar{g}_k,\bar{d}_k) + \alpha_k\bar\tau_k\|\nabla f_k - \bar{g}_k\|\|\bar{d}_k\| + \tfrac{\bar\tau_kL+\Gamma}{2}\alpha_k^2\|\bar{d}_k\|^2 \\
        \leq \ &-\left(1 - \left(\sqrt{\tfrac{\bar\tau_{-1}}{\kappa_l}}\kappa_{\mathrm{FO}} + \tfrac{L}{2\kappa_l} + \tfrac{\Gamma}{2\bar\tau_{\min}\kappa_l} \right)\tilde\alpha\right)\alpha_k\Delta l(x_k,\bar\tau_k,\bar{g}_k,\bar{d}_k) \\
        \leq \ &-\alpha_k\theta\Delta l(x_k,\bar\tau_k,\bar{g}_k,\bar{d}_k).
    \end{align*}
    \textbf{Case B} \ When $\|\bar{g}_k - \nabla f_k\| \leq \epsilon_g$ and iteration $k$ is true, \eqref{eq.case_b_Deltal_deter_stoch_bound} holds. Moreover, by the condition that $k < T_{\varepsilon_{\Delta l}}$ and Definition~\ref{def.iter_term}, it follows that
    \begin{align*}
        \|\bar{g}_k - \nabla f_k\| \leq \epsilon_g < \eta\varepsilon_{\Delta l} < \eta\sqrt{\Delta l(x_k,\tauTrue_k,\nabla f_k,\dTrue_k)}.
    \end{align*}
    Therefore, by $\alpha_k \leq \tilde\alpha$, Assumption~\ref{ass.good_merit_paramter} \change{(the occurrence of event $\mathcal{E}$)}, \eqref{eq.case_b_Deltal_deter_stoch_bound} and Lemmas~\ref{lem.Deltal_lb} and~\ref{lem.suff_decrease_exact}, 
    \begin{align*}
        &\phi(x_k + \alpha_k\bar{d}_k,\bar\tau_k) - \phi(x_k,\bar\tau_k) \\
        \leq \ &-\alpha_k\Delta l(x_k,\bar\tau_k,\bar{g}_k,\bar{d}_k) + \alpha_k\bar\tau_k\|\nabla f_k - \bar{g}_k\|\|\bar{d}_k\| + \tfrac{\bar\tau_kL+\Gamma}{2}\alpha_k^2\|\bar{d}_k\|^2 \\
        \leq \ &-\alpha_k\left(\left(1 - \eta\sqrt{\tfrac{\bar\tau_{-1}}{\kappa_l}}\max\left\{\sqrt{\tfrac{1+\epsilon_{\tau}\omega_1}{1-\eta\omega_4}},\tfrac{1}{\sqrt{1-\eta\omega_5}}\right\}\right) - \tfrac{\bar\tau_{\min}L+\Gamma}{2\bar\tau_{\min}\kappa_l}\tilde\alpha\right)\Delta l(x_k,\bar\tau_k,\bar{g}_k,\bar{d}_k) \\
        \leq \ &-\alpha_k\theta\Delta l(x_k,\bar\tau_k,\bar{g}_k,\bar{d}_k).
    \end{align*}
    Combining \textbf{Cases A} and \textbf{B}, and that the iteration is \emph{\textbf{true}}, we conclude the proof of $(iv)$
    \begin{align*}
        \bar\phi(x_k + \alpha_k\bar{d}_k,\bar\tau_k;\xi_k^+) - \bar\phi(x_k,\bar\tau_k;\xi_k) &\leq -\alpha_k\theta\Delta l(x_k,\bar\tau_k,\bar{g}_k,\bar{d}_k) + \bar\tau_k e_k + \bar \tau_k e_k^+ \\
        &\leq -\alpha_k\theta\Delta l(x_k,\bar\tau_k,\bar{g}_k,\bar{d}_k) + 2\bar\tau_{k}\epsilon_f.
    \end{align*}
\end{proof}
\begin{proof}[Proof of (v)]
If iteration $k$ is \emph{\textbf{unsuccessful}}, then by definition $Z_{k+1} = Z_k$, so the inequality holds trivially. 
    Otherwise, starting with the second equation in~\eqref{eq.parta}
    \begin{align*}
        &Z_{k+1} - Z_k \\
        \leq \ &\phi(x_{k+1},\bar\tau_{k+1}) - \bar\phi(x_{k+1},\bar\tau_k;\xi_k^+) + \bar\phi(x_{k+1},\bar\tau_k;\xi_k^+) - \bar\phi(x_k,\bar\tau_k;\xi_k) - \bar\tau_{k+1}f_{\inf} + \bar\tau_kf_{\inf} + \bar\tau_k e_k \\
        \leq \ &-\alpha_k\theta\Delta l(x_k,\bar\tau_k,\bar{g}_k,\bar{d}_k)  + (\bar\tau_{k+1} - \bar\tau_k)(f(x_{k+1}) - f_{\inf}) + 2\bar\tau_{k}\epsilon_f + \bar\tau_k (e_k + e_k^+) \\
        \leq \ &2\bar\tau_{-1}\epsilon_f + \bar\tau_{-1} (e_k + e_k^+).
        \end{align*}
\end{proof}

\end{document}